\newcommand*{\LargerCdot}{\raisebox{-0.25ex}{\scalebox{1.2}{$\cdot$}}}
\newcommand*{\Largertimes}{\raisebox{-0.25ex}{\scalebox{1.4}{$\times$}}}
\newtheorem{lemma}{Lemma}[section]
\newtheorem{proposition}[lemma]{Proposition}
\newtheorem{theorem}[lemma]{Theorem}
\newtheorem{remark}[lemma]{Remark}
\newtheorem{corollary}[lemma]{Corollary}
\newtheorem{definition}[lemma]{Definition}
\newtheorem{notation}[lemma]{Notation}
\newcommand{\EE}{\mathbb E}
\newcommand{\NN}{\mathbb N}
\newcommand{\PP}{\mathbb P}
\newcommand{\RR}{\mathbb R}
\newcommand{\SC}{\mathcal C}
\newcommand{\SF}{\mathcal F}
\newcommand{\SL}{\mathcal L}
\newcommand{\CP}{\mathcal P}
\newcommand{\SU}{\mathcal U}
\newcommand{\1}{\mathbbm{1}}
\newcommand{\norm}[1]{\parallel\! #1 \!\parallel_{\infty}}
\newcommand{\bignorm}[1]{\left|\!\left| #1 \right|\!\right|_{\infty}}
\newcommand{\normx}[2]{\parallel\! #1 \!\parallel_{ #2 }}
\newcommand{\cstar}{*}
\newcommand{\eqn}[1]{\begin{equation} #1 \end{equation}}
\newcommand{\eqan}[1]{\begin{align} #1 \end{align}}
\newcommand{\lbeq}[1]{\label{#1}}
\newcommand{\nn}{\nonumber}
\numberwithin{equation}{section}
\begin{document}

\author{
Sandra Kliem
\thanks{
Fakult\"at f\"ur Mathematik, Universit\"at Duisburg-Essen, Thea-Leymann-Str. 9, D-45127 Essen, Germany.
E-mail: {\tt sandra.kliem@uni-due.de}}}

\title{Travelling wave solutions to the KPP equation with branching noise arising from initial conditions with compact support}
%\runtitle{xxx}

\maketitle

\begin{abstract}
\noindent
We consider the one-dimensional KPP-equation driven by space-time white noise and extend the construction of travelling wave solutions arising from initial data $f_0(x) = 1 \wedge (-x \vee 0)$ from \cite{T1996} to non-negative continuous functions with compact support. As an application the existence of travelling wave solutions is used to prove that the support of any solution is recurrent. As a by-product, several upper measures are introduced that allow for a stochastic domination of any solution to the SPDE at a fixed point in time.
\end{abstract}

\vspace{0.3in}

\noindent
{\bf Key words:} stochastic PDE; KPP equation; white noise; travelling wave; initial conditions with compact support; recurrence.

\noindent
{\bf MSC2000 subject classification.} {Primary
60H15; % SPDEs  
Secondary
35R60. % PDEs 
} 

%\date{\today}

%\tableofcontents{}

% ======================================================================

%
%%%%%%%%%%%%%%%%%%%%%%%%%%%%%%%
%
\section{Introduction}
%
%%%%%%%%%%%%%%%%%%%%%%%%%%%%%%%
%
\subsection{Motivation}
%
% ------------------------------------------------------------
%
Consider non-negative solutions to the one-dimensional stochastic partial differential equation (SPDE)
\eqan{
\lbeq{equ:SPDE} 
  & \partial_t u = \partial_{xx} u + \theta u - u^2 + u^{\frac{1}{2}} dW, \qquad t>0, x \in \RR, \theta>0 \\
  & u(0,x) = u_0(x) \geq 0, \nn
}
where $W=W(t,x)$ is space-time white noise and $\theta > 0$ a parameter. The deterministic part of this SPDE is (after appropriate scaling, cf. Mueller and Tribe~\cite[Lemma~2.1.2]{MT1994}) the well-studied Kolmogorov-Petrovskii-Piskunov-(KPP)-equation (also known as the Kolmogorov- or Fisher-equation). In Bramson~\cite{B1983} the existence of a family of non-negative travelling wave solutions to this deterministic partial differential equation (PDE) is established. Including the noise term, one can think of $u(t,x)$ as the density of a population in time and space. Leaving out the term $\theta u-  u^2$, the above SPDE is the density of a super-Brownian motion (cf. Perkins~\cite[Theorem~III.4.2]{bP2002}), the latter being the high density limit of branching particle systems that undergo branching random walks. The additional term of $\theta u$ models linear mass creation at rate $\theta > 0$, $-u^2$ models death due to overcrowding. In \cite{MT1995}, Mueller and Tribe obtain solutions to \eqref{equ:SPDE} as limits of densities of scaled long range contact processes with competition. The same techniques can be extended to obtain solutions to SPDEs with more general drift-terms, see Kliem~\cite{K2011}. 

The existence and uniqueness in law of solutions to \eqref{equ:SPDE} in the space of non-negative continuous functions with slower than exponential growth $\SC_{tem}^+$, is established in Tribe~\cite[Theorem~2.2]{T1996}. Let $\tau \equiv \inf\{ t \geq 0: u(t,\cdot) \equiv 0\}$ be the \textit{extinction-time} of the process. By \cite[Theorem~1]{MT1994}, there exists a critical value $\theta_c>0$ such that for any initial condition $u_0 \in \SC_c^+ \backslash \{0\}$ with compact support and $\theta < \theta_c$, the extinction-time of $u$ solving \eqref{equ:SPDE} is finite almost surely. For $\theta>\theta_c$, \textit{survival}, that is $\tau=\infty$, happens with positive probability. 

Let $R_0(u(t)) \equiv R_0(t) \equiv\sup\{ x \in \RR: u(t,x) > 0 \}$. Then $R_0(t)=-\infty$ if and only if $\tau \leq t$. Extending arguments of Iscoe~\cite{I1988} one can show that $R_0(u(0))<\infty$ implies $R_0(u(t))<\infty$ for all $t>0$. Using $R_0$ as a \textit{(right) wavefront marker}, we look for so-called \textit{travelling wave solutions} to \eqref{equ:SPDE}, that is solutions with the properties
\eqan{
  & (i) \qquad R_0(u(t)) \in (-\infty,\infty) \mbox{ for all } t \geq 0, \lbeq{equ:tv-i} \\
  & (ii) \quad\ \ u(t,\cdot + R_0(u(t))) \mbox{ is a stationary process in time}. \lbeq{equ:tv-ii}
}
In \cite[Section~3]{T1996} the existence of travelling wave solutions to \eqref{equ:SPDE} is shown, in \cite[Section~4]{T1996} it is established that for $\theta>\theta_c$ any travelling wave solution has an asymptotic (possibly random) \textit{wave speed}
\eqn{
  R_0(u(t))/t \rightarrow A \in \left[ 0,2\theta^{1/2} \right] \mbox{ for } t \rightarrow \infty \mbox{ almost surely.}
}
Strict positivity of $A$ remains an open problem if $\theta$ is not big enough. 

To construct a travelling wave, \cite{T1996} proceeds as follows. Use $R_1(u(t)) \equiv \ln\left( \int \exp(x) u(t,x) dx \right)$ in place of the wavefront marker $R_0(t)$ and take as initial condition $f_0(x) \equiv 1 \wedge (-x \vee 0)$ in \eqref{equ:SPDE}. Then the sequence $(\nu_T)_{T \in \NN}$ with
\eqn{
  \nu_T \ \mbox{ the law of } \ T^{-1}\!\int_0^T u(s,\cdot + R_1(u(s))) ds 
}
is tight and any limit point $\nu$ is nontrivial. Starting in $u_0$ with distribution $\nu$, shifted by $R_0$, one then obtains a travelling wave solution to \eqref{equ:SPDE}. 

The investigation of survival properties of solutions to \eqref{equ:SPDE} is a major challenge, where the main difficulty comes from the competition term $-u^2$. Without competition, the underlying ``additive property'' (cf. \cite[pages 167-168 and 159]{bP2002} in the context of Dawson-Watanabe superprocesses with drift) facilitates the use of Laplace functionals. Including competition, only subadditivity in the sense of \cite[Lemma~2.1.7]{MT1994} holds. 

It is quite common to first investigate the behaviour of solutions to SPDEs dependent on a parameter $\theta$ for $\theta$ very large respectively very small (see for instance \cite{MT1994} and \cite[Proposition~4.1c)]{T1996} in the context of \eqref{equ:SPDE}, Mueller and Sowers~\cite{MS1995} and Mueller, Mytnik and Quastel~\cite{MMQ2011} for KPP-type perturbed by a Fisher-Wright white noise). In the first case, to establish survival, a fruitful technique turns out to be comparison with $N$-dependent oriented site percolation with density $1-\rho$ (cf. Durrett~\cite[Chapter~4]{bD1995}). For $\theta$ big enough the influence of the stochastic part of the SPDE can be neglected on appropriately chosen time- and space- intervals and the linear drift of $\theta u$ dominates by far the influence of competition by $-u^2$. This allows for a first comparison with the solution to the corresponding PDE whose ability to ``generate and distribute mass'' forward in time is known. A second comparison of the latter with $N$-dependent percolation concludes the respective argument. Indeed, use that if the density of open sites is high enough, percolation occurs (cf. \cite[Theorem~A.1]{bD1995}).  In the second case where $\theta>0$ is small enough, the overall mass can be dominated by a stochastic process that goes extinct with probability one. Finally, comparison techniques can be used to show that the chance of survival is non-decreasing in $\theta$ (cf. \cite[Lemma~2.1.6]{MT1994}). In particular, the existence of $\theta_c>0$ follows. 

For $\theta>\theta_c$ close to criticality comparison with $N$-dependent oriented percolation is a difficulty, as competition increases dependence in space. Recall the construction of solutions to \eqref{equ:SPDE} in \cite{MT1995} by means of limits of densities of scaled long range contact processes with competition. For the nearest-neighbor contact process, Liggett~\cite[Theorem~2.28 of Chapter VI]{bL2005} gives a full description of the limiting law of a solution: for $\theta^{[contact]}>\theta_c^{[contact]}$, the limiting law is the weighted average of the Dirac-measure on the zero-configuration and the upper invariant measure of the process (cf. \cite[VI(1.1)]{bL2005}), where the weight on the former coincides with the extinction probability. A particularly interesting question is if such a complete convergence result holds true in the present setup? Note in particular that the result in \cite{bL2005} holds for any initial distribution. In Horridge and Tribe~\cite[Theorem~1]{HT2004} such a result is given for all $\theta>\theta_c$ under the assumption that the initial condition $u_0$ has property \cite[(6)]{HT2004}, that is, is ``uniformly distributed in space''. But what can be said for solutions to \eqref{equ:SPDE} with initial conditions with compact support when we condition on survival? Is a similar result valid? 

%%%%%%%%%%%%%%%%%%%%%%%%%%%%%%%
%
\subsection{Notation and background}
\label{ss:notation}
%
% ------------------------------------------------------------
%
As a state space for solutions to \eqref{equ:SPDE} the \textit{space of non-negative continuous functions with slower than exponential growth} $\SC_{tem}^+$, defined as follows, is chosen. Let $\SC^+$ be the space of non-negative continuous functions on $\RR$, then
\eqn{
\lbeq{equ:Ctem}
  \SC_{tem}^+ = \left\{ f \in \SC^+: \normx{f}{\lambda} < \infty \mbox{ for all } \lambda>0 \right\} \mbox{ with } \normx{f}{\lambda} = \sup_{x \in \RR} |f(x)| e^{-\lambda |x|}.
}
Equip $\SC_{tem}^+$ with the topology given by the norms $\normx{f}{\lambda}$ for $\lambda>0$. Note that $d(f,g) \equiv \sum_{n \in \NN} (1 \wedge $ $\normx{f-g}{1/n})$ metrizes this topology and makes $\SC_{tem}^+$ a Polish space. Let $(\SC([0,\infty),\SC_{tem}^+),\SU,\SU_t,U(t))$ be continuous path space, the canonical right continuous filtration and the coordinate variables. 

Write $\langle f,g \rangle = \int f(x) g(x) dx$ and use $\Rightarrow$ to denote weak convergence of probability measures. Next, recall the following notation and result from \cite{T1996}.

\begin{notation}[equations (2.4)--(2.5) of \cite{T1996}] 
\label{NOTA:tribe}
Consider the generalized equation
\eqn{
\lbeq{equ:SPDE_T}
  \partial_t u = \partial_{xx} u + \alpha + \theta u - \beta u - \gamma u^2 + u^{\frac{1}{2}} dW
}
with $\alpha, \beta, \gamma \in \SC([0,\infty),\SC_{tem}^+)$. We may interpret $\alpha$ as the immigration rate, $\theta-\beta$ as the mass creation-annihilation rate and $\gamma$ as the overcrowding rate.

A solution to \eqref{equ:SPDE_T} consists of a filtered probability space $(\Omega,\SF,\SF_t,\PP)$, an adapted white noise $W$ and an adapted continuous $\SC_{tem}^+$ valued process $u(t)$ such that for all $\phi \in \SC_c^\infty$, the space of infinitely differentiable functions on $\RR$ with compact support,
\eqan{
  \langle u(t),\phi \rangle =& \langle u(0),\phi \rangle + \int_0^t \langle u(s),\phi_{xx}+(\theta-\beta(s)-\gamma(s)u(s)) \phi \rangle ds \\
  & + \int_0^t \langle \alpha(s),\phi \rangle ds + \int_0^t \int |u(s,x)|^{1/2} \phi(x) dW_{x,s}. \nn
}
If in addition $\PP(u(0,x)=f(x))=1$ then we say the solution $u$ starts at $f$.
\end{notation}

\begin{theorem}[Theorem~2.2a)--b) of \cite{T1996}] \hfill
\label{THM:tribe}
\begin{enumerate}
\item[a)] For all $f \in \SC_{tem}^+$ there is a solution to \eqref{equ:SPDE_T} started at $f$.
\item[b)] All solutions to \eqref{equ:SPDE_T} started at $f$ have the same law which we denote by $\PP^{f,\alpha,\beta,\gamma}$. The map $(f,\alpha,\beta,\gamma) \rightarrow \PP^{f,\alpha,\beta,\gamma}$ is continuous. The laws $\PP^{f,\alpha,\beta,\gamma}$ for $f \in \SC_{tem}^+$ form a strong Markov family.
\end{enumerate}
\end{theorem}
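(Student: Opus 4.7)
The plan is to adopt the standard toolkit for superprocess-type SPDEs: build existence by truncating the quadratic term, use a moment duality to obtain uniqueness in law, and then deduce continuity of the parameter map and the strong Markov family property from well-posedness of the associated martingale problem.

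For existence (part a)), I would first truncate the competition. For $n \in \NN$ replace $\gamma u^2$ by $\gamma u \cdot (u \wedge n)$, which makes the drift globally Lipschitz in $u$ on bounded sets while preserving the H\"older-$\tfrac{1}{2}$ branching noise. Non-negativity is kept by interpreting $u^{1/2}$ as $(u^+)^{1/2}$ and comparing against the zero solution. Existence of a $\SC_{tem}^+$-valued solution $u_n$ of the truncated equation then follows from Shiga-style white-noise SPDE theory. Applying It\^o's formula to $\langle u_n(t),\phi \rangle$ for a family of smooth weights $\phi \in \SC_c^\infty$ of controlled exponential decay, together with a Gronwall argument, produces first and second moment bounds on $u_n$ that are uniform in $n$ on compact subsets of $[0,\infty) \times \RR$. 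These bounds imply that the exit time $\tau_n \equiv \inf\{ t: \sup_{|x|\leq n} u_n(t,x) \geq n\}$ tends to infinity almost surely, so any weak limit point of $(u_n)$ solves \eqref{equ:SPDE_T}.

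The main obstacle is uniqueness in law, since pathwise uniqueness for the H\"older-$\tfrac{1}{2}$ noise $u^{1/2}\,dW$ is not available. I would invoke a Mytnik-style moment duality with a branching-coalescing Brownian particle system. In the constant-coefficient case $(\alpha,\beta,\gamma)=(0,0,\gamma_0)$ the dual is a finite system of Brownian particles that branch at rate $\theta$ and coalesce pairwise at rate proportional to $\gamma_0$ times a local collision intensity, linked to $u$ via an identity for the Laplace functional $\EE^{u_0}[\exp(-\langle u(t),\phi\rangle)]$. For the general inhomogeneous equation I would allow space-time dependent branching rate $\theta-\beta(t,x)$ and coalescence rate $\gamma(t,x)$, and encode the immigration $\alpha$ as a multiplicative Feynman--Kac functional on the particle configuration. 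The duality identity must be proved first for a mollified noise and smoothed coefficients (where It\^o's formula applied to the dual functional is straightforward) and then extended to the limit using the moment bounds from the previous step. Once the identity pins down all finite-dimensional Laplace functionals of $u(t)$, uniqueness in law follows.

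Given existence and uniqueness in law, continuity of $(f,\alpha,\beta,\gamma) \mapsto \PP^{f,\alpha,\beta,\gamma}$ reduces to tightness plus identification: the moment bounds from the truncation step depend continuously on the parameters, hence the family $(\PP^{f_n,\alpha_n,\beta_n,\gamma_n})$ is tight on $\SC([0,\infty),\SC_{tem}^+)$, any subsequential limit satisfies the martingale problem of Notation \ref{NOTA:tribe} for $(f,\alpha,\beta,\gamma)$, and uniqueness in law forces convergence of the whole sequence. The strong Markov family property is then automatic: regular conditional probabilities of $\PP^{f,\alpha,\beta,\gamma}$ given $\SU_\tau$ at a stopping time $\tau$ are, almost surely, solutions to \eqref{equ:SPDE_T} started at $U(\tau)$ with shifted coefficients $\alpha(\tau+\cdot)$, $\beta(\tau+\cdot)$, $\gamma(\tau+\cdot)$, and uniqueness in law identifies them with the corresponding $\PP$-laws. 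Throughout, the pivotal and most delicate step is the duality argument, which is quite sensitive to the precise polynomial form of the drift and of the noise coefficient.
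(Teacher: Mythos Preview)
The paper does not prove this statement: it is quoted verbatim as Theorem~2.2 of \cite{T1996} and used throughout as a black box, so there is no proof in the paper to compare your proposal against.

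That said, a brief remark on your sketch versus what is actually known for this equation. Your uniqueness argument rests on a Mytnik-style duality with a branching--coalescing Brownian particle system, with the inhomogeneous coefficients $\alpha,\beta,\gamma$ absorbed into space--time dependent branching/coalescence rates and a Feynman--Kac weight. This is plausible in spirit but delicate in practice: the coalescence mechanism corresponding to the $-\gamma u^2$ drift is driven by local time of pairwise collisions, and making the duality identity rigorous for general $\alpha,\beta,\gamma \in \SC([0,\infty),\SC_{tem}^+)$ would require substantial work beyond the constant-coefficient case. In contrast, the paper itself (Section~\ref{SEC:self_duality_and_uim}, equation~\eqref{equ:self-duality}) records and exploits the \emph{self-duality} of \eqref{equ:SPDE}: two independent copies $u,v$ of the same SPDE satisfy $\EE[e^{-2\langle u(t),v(0)\rangle}]=\EE[e^{-2\langle u(0),v(t)\rangle}]$. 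This is the duality that underlies uniqueness in \cite{T1996} for the base equation; the extension to general $(\alpha,\beta,\gamma)$ in \cite{T1996} proceeds via an approximate duality and Girsanov-type arguments rather than a particle system. Your existence and continuity/strong-Markov steps are standard and essentially correct in outline.
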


We now introduce additional notation used in the present article.

\begin{notation} \hfill
\label{NOTA:kliem}
\begin{enumerate}
\item
A solution to \eqref{equ:SPDE} is defined as in Notation~\ref{NOTA:tribe} above with $\alpha=\beta \equiv 0$ and $\gamma \equiv 1$. By Theorem~\ref{THM:tribe}, existence and uniqueness in law of solutions to \eqref{equ:SPDE} started at $u_0 \in \SC_{tem}^+$ hold and the laws $\PP_{u_0} \equiv \PP^{u_0,0,0,1}$ of such solutions form a strong Markov family on $\SC([0,\infty),\SC_{tem}^+)$. Use $\EE_{u_0}$ to denote respective expectations.
\item
In what follows denote $u_t(x) \equiv u(t,x)$, abbreviate $u_t \equiv u(t) \equiv u(t,\cdot)$ and write $u_t^{(u_0)}(x)$ for $u(t,x)$, starting at $u(0,x)=u_0(x)$. 
\item
Let $\tau \equiv \inf\{ t \geq 0: u(t,\cdot) \equiv 0\}$ be the \textit{extinction-time} of the process and denote the \textit{right wavefront marker} by
\eqn{
  R_0(f) \equiv \sup\{x \in \RR: f(x)>0 \} \mbox{ and write } R_0(t) \equiv R_0(u_t). 
}
For arbitrary $f \in \SC_{tem}^+ \backslash \{0\}$, $R_0(f) \in (-\infty,\infty]$. By the last two lines of \cite[Lemma~2.1]{T1996} (note that in the latter case $|x|$ has to be replaced by $x$; also see Remark~\ref{RMK:tribe_lemma_2_1} below) $R_0(0) < \infty$ implies $R_0(t) < \infty$ almost surely for all $t \geq 0$. Further adopt the obvious conventions $R_0(t)=-\infty$ on $\{ \tau \leq t \}$ and $u_t(\cdot+R_0(t)) \equiv 0$ on $\{\tau \leq t\}$. 

Analogously define the \textit{left wavefront marker} by $L_0(f) \equiv \inf\{x \in \RR: f(x)>0 \}$.
\item
For $\nu \in \CP(\SC_{tem}^+)$, the space of probability measures on $\SC_{tem}^+$, denote $\PP_\nu(A) \equiv \int_{\SC_{tem}^+} \PP_f(A) \nu(df)$. (Borel measurability in $f$ follows from the continuity of the map $f \mapsto \PP_f$ on $\SC_{tem}^+$, cf. Theorem~\ref{THM:tribe}b).)
\item
Use $\SC_c^+$ to denote the space of non-negative continuous functions with compact support and note that due to the \textit{compact support property}, $u_t \in \SC_c^+$ for all $t \geq 0$ if $u_0 \in \SC_c^+$. The compact support property follows for instance by reasoning as at the beginning of \cite[Section~2]{HT2004} or using \cite[Lemma~2.1]{T1996}.
\end{enumerate}
\end{notation}

Constants may change from line to line.
%
%%%%%%%%%%%%%%%%%%%%%%%%%%%%%%%
%
\subsection{Main results}
\label{ss:results}
%
% ------------------------------------------------------------
%
The first main result of the present article is an alternative construction of travelling wave solutions in case $\theta>\theta_c$. The initial condition $f_0$ from \cite{T1996} is replaced by an arbitrary non-negative continuous function $g_0 \in \SC_c^+$ with compact support. As extinction (that is $\tau = \inf\{t \geq 0: u_t \equiv 0 \} = \inf\{t \geq 0: \langle u_t,1 \rangle = 0 \} < \infty$) happens with probability $0<\PP_{g_0}(\tau<\infty)<1$, we condition on non-extinction to obtain non-zero travelling wave solutions. 
%
% ----------------------------------------------------------------------
%
\begin{definition} 
\label{DEF:nu_T}
For $\theta>\theta_c, g_0 \in \SC_c^+ \backslash \{0\}$ let $\nu_T \in \CP(\SC_{tem}^+)$ be given by
\eqn{	
  \nu_T(A) \equiv T^{-1}\!\int_0^T \PP_{g_0}(u_s(\cdot+R_0(s)) \in A \;|\; \tau=\infty) ds,
}
that is, $\nu_T$ is the law of $T^{-1}\!\int_0^T u_s(\cdot+R_0(s)) ds$ under $\PP_{g_0}(\cdot \;|\; \tau=\infty) \in \CP(\SC([0,\infty),\SC_{tem}^+))$.
\end{definition}
%
% ----------------------------------------------------------------------
%
\begin{remark} 
\label{RMK:def_nu_T}
For $\theta>\theta_c, g_0 \neq 0$ this yields in particular (recall Notation~\ref{NOTA:kliem}-4) $\PP_{\nu_T} \in \CP(\SC([0,\infty),\SC_{tem}^+))$ with
\eqn{	
\lbeq{equ:def_nu_T}
  \PP_{\nu_T}(B)
    = (\PP_{g_0}(\tau=\infty))^{-1} T^{-1} \int_0^T \EE_{g_0}\!\left[ \1_{\{\tau=\infty\}} \PP_{u_s(\cdot+R_0(s))}(B) \right] ds,
}
where $0< \PP_{g_0}(\tau=\infty) <1$.
\end{remark}
%
% ----------------------------------------------------------------------
%
The sequence $(\nu_T)_{T \in \NN}$ is in the sequel shown to be tight for $\theta>\theta_c, g_0 \in \SC_c^+ \backslash \{0\}$ fixed. Every subsequential limit $\nu$ yields the (not necessarily unique) law $\PP_\nu$ of a travelling wave solution, that is under $\PP_\nu$, \eqref{equ:tv-i} and \eqref{equ:tv-ii} hold. 
%
% ----------------------------------------------------------------------
%
\begin{theorem}
\label{THM:trav_wave_exists}
Let $\theta>\theta_c$ and $g_0 \in \SC_c^+ \backslash \{0\}$. Every subsequential limit of the tight sequence $\{ \nu_T: T \in \NN \}$ from Definition~\ref{DEF:nu_T} yields a travelling wave solution to equation \eqref{equ:SPDE}. 
\end{theorem}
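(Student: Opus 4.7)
The plan is a Krylov--Bogolyubov style averaging argument adapted to the wavefront shift. I would proceed in three steps: first, establish tightness of $\{\nu_T : T \in \NN\}$ on $\SC_{tem}^+$; second, verify that any subsequential weak limit $\nu$ is invariant under the shifted time translation, giving the travelling wave property \eqref{equ:tv-ii}; third, verify non-triviality of $\nu$ so that \eqref{equ:tv-i} is satisfied non-degenerately.

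For tightness I would exploit the stochastic upper measures announced in the abstract to dominate $u_s(\cdot+R_0(s))$ under $\PP_{g_0}$ uniformly in $s$. Since this shifted profile is supported on $(-\infty,0]$, the super-Brownian-type comparison yields exponential-in-$|x|$ tail decay to the left of the wavefront and uniform bounds on $\EE_{g_0}[\normx{u_s(\cdot+R_0(s))}{\lambda}^p \1_{\{\tau=\infty\}}]$ for every $\lambda, p>0$. Dividing by $\PP_{g_0}(\tau=\infty) \in (0,1)$, which is strictly positive by the supercritical assumption $\theta>\theta_c$ together with $g_0 \in \SC_c^+ \setminus \{0\}$, transfers these bounds to the conditioned law and hence to $\nu_T$ uniformly in $T$. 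Combined with standard modulus-of-continuity estimates for solutions to \eqref{equ:SPDE}, this gives tightness in $\SC_{tem}^+$.

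For stationarity, the Cesàro-average structure is key. Fix $h > 0$ and bounded continuous $G: \SC_{tem}^+ \to \RR$, and set $\phi(r) \equiv \EE_{g_0}[G(u_r(\cdot + R_0(r))) \;|\; \tau=\infty]$. Using translation invariance of the SPDE and the strong Markov property (Theorem~\ref{THM:tribe}b)), one obtains $\int G(U_h(\cdot + R_0(U_h))) \, d\PP_{\nu_T} = T^{-1}\!\int_h^{T+h} \phi(r) \, dr$, whereas $\int G \, d\nu_T = T^{-1}\!\int_0^T \phi(r) \, dr$; their difference is bounded by $2h \norm{G}/T$. Passing to the limit along a convergent subsequence $T_k \to \infty$ then yields $\EE_\nu[G(U_h(\cdot + R_0(U_h)))] = \int G \, d\nu$, which is \eqref{equ:tv-ii}. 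The subtle point here is that the map $f \mapsto \EE_f[G(U_h(\cdot + R_0(U_h)))]$ is not obviously continuous, since $R_0$ is only upper semicontinuous on $\SC_{tem}^+$. I would handle this by approximating $R_0$ by continuous $R_1$-type wavefront markers (e.g.\ $R_\varepsilon(f) \equiv \varepsilon \ln \int e^{x/\varepsilon} f(x) \, dx$) and using the compact support property together with uniform tightness near the front to show that the error vanishes in the joint limit $\varepsilon \to 0$, $T \to \infty$.

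Non-triviality ($\nu(\{0\}) = 0$) follows from a uniform-in-$s$ positive lower bound on $\PP_{g_0}(\langle u_s(\cdot + R_0(s)), \psi \rangle \geq c \;|\; \tau=\infty)$ for some fixed $c > 0$ and $\psi \in \SC_c^+$ supported in $(-\delta, 0)$, ruling out a ``vanishing front'' on the survival event. I expect the main obstacle to be the semicontinuity defect of $R_0$ in the limit passage; once that is controlled via the smooth-marker approximation, the argument closely parallels Tribe's construction with $R_1$ in \cite{T1996} but yields the stronger statement using the true wavefront marker $R_0$ with an arbitrary non-zero initial datum in $\SC_c^+$.
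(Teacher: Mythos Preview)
Your overall Krylov--Bogolyubov strategy matches the paper's, and you correctly single out the discontinuity of $R_0$ as the central obstruction. Two issues, however, need to be flagged.

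First, the displayed identity $\int G(U_h(\cdot + R_0(U_h)))\, d\PP_{\nu_T} = T^{-1}\!\int_h^{T+h} \phi(r)\, dr$ is not correct as an exact equality. The left side involves starting a \emph{fresh unconditioned} SPDE from initial data drawn from the conditioned law, whereas $\phi(r)$ is computed under conditioning on $\{\tau=\infty\}$ throughout. Since $\1_{\{\tau=\infty\}}$ is not $\SF_s$-measurable, the Markov property does not give
\[
\EE_{g_0}\big[\1_{\{\tau=\infty\}}\, \EE_{u_s(\cdot+R_0(s))}[G(u_h(\cdot+R_0(h)))]\big]
=\EE_{g_0}\big[\1_{\{\tau=\infty\}}\, G(u_{s+h}(\cdot+R_0(s+h)))\big].
\]
The two integrands differ, but their difference tends to zero as $s\to\infty$ (e.g.\ via $L^1$-convergence of the bounded martingale $\PP_{g_0}(\tau=\infty\mid\SF_s)$ to $\1_{\{\tau=\infty\}}$), so the Ces\`aro averages still agree in the limit. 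You should be aware the equality is only asymptotic.

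Second, and more substantively, both your non-triviality step and your $R_\varepsilon\to R_0$ approximation hinge on showing, uniformly in $s$ and $T$, that conditionally on survival there is mass bounded away from zero within a fixed window $(-2a,0]$ of the wavefront. You invoke this as ``uniform tightness near the front'' without indicating a mechanism. In the paper this is Lemma~\ref{LEM:mass_somewhere_in_limit}, and it is the technical heart of the construction: its proof combines the small-time estimate $\PP_{\nu_T}(R_0(b)<-a)\leq C(g_0,\theta)b^{1/4}/a$ of Lemma~\ref{LEM:3_6} (itself resting on the left-upper-measure bounds of Section~\ref{SEC:estimates_wfm}) with a superprocess comparison to rule out small mass near the front. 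Your tightness bounds control mass \emph{to the left} of the front but not its concentration \emph{at} the front, so without an analogue of this lemma the approximation argument cannot close.

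On the approximating marker itself: your $R_\varepsilon(f)=\varepsilon\ln\int e^{x/\varepsilon}f(x)\,dx$ differs from the paper's $R_{m_0}^N$, a convolution-smoothed mass-threshold marker restricted to $[-N,N]$. The paper's choice is bounded and continuous on all of $\SC_{tem}^+$, which makes the uniform-continuity step on compacta (the paper's \eqref{stationary-4}) immediate; your $R_\varepsilon$ is unbounded and only continuous on $\{f\not\equiv 0: R_0(f)<\infty\}$, so you would additionally need tightness of $R_0(u_t)$ under $\PP_{\nu_T}$ (Lemma~\ref{LEM:3_6} again) before applying continuous mapping. Either route works once Lemma~\ref{LEM:mass_somewhere_in_limit} is in hand, but the paper's marker makes the bookkeeping cleaner.
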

%
% ----------------------------------------------------------------------
%
We work with the original wavefront marker $R_0(t)$ and show in particular that the wavefront marker of the limiting solution is zero with probability one. 
%
% ----------------------------------------------------------------------
%
\begin{proposition} \label{PRO:limit_never_zero}
Let $\theta>\theta_c$, $g_0 \in \SC_c^+ \backslash \{0\}$ and let $\nu_{T_n}$ be a subsequence that converges to $\nu$. Then $\nu(\{ f: R_0(f) = 0 \})=1$ and $\PP_\nu(u(t) \not \equiv 0)=1$ for all $t \geq 0$.
\end{proposition}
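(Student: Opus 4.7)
For $R_0(f)\leq 0$ $\nu$-a.s., fix $x>0$. The evaluation $f\mapsto f(x)$ is continuous on $\SC_{tem}^+$, so $\{f: f(x)>0\}$ is open. By construction of $\nu_T$ and the convention $u_s(\cdot+R_0(s))\equiv 0$ on $\{\tau\leq s\}$, we have $\nu_T(\{f(x)>0\})=0$ for every $T$; Portmanteau then gives $\nu(\{f(x)>0\})\leq \liminf_n\nu_{T_n}(\{f(x)>0\})=0$. A countable union over rationals $x>0$ together with continuity of $f$ yields $f\equiv 0$ on $(0,\infty)$ $\nu$-a.s. The reverse inequality $R_0(f)\geq 0$ is subtler, because $R_0$ is lower semicontinuous on $\SC_{tem}^+$, so $\{R_0\leq -\epsilon\}$ is closed and Portmanteau goes the wrong direction. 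I would replace it by a uniform-in-$T$ lower bound on the mass that $\nu_T$ places in arbitrarily small left-neighborhoods of $0$: for each $\epsilon>0$ and a nonnegative $\phi_\epsilon\in\SC_c^\infty$ supported in $[-\epsilon,0]$, show $\inf_T\EE_{\nu_T}[1\wedge\langle f,\phi_\epsilon\rangle]>0$, drawing on the non-degeneracy of the wavefront profile under conditioning on $\{\tau=\infty\}$. Since $f\mapsto 1\wedge\langle f,\phi_\epsilon\rangle$ is bounded continuous, this bound survives the weak limit, and combined with a shrinking-$\epsilon$ (zero-one) argument via the strong Markov property, forces $R_0(f)\geq 0$ $\nu$-a.s.

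For the non-extinction claim the key input is the bounded martingale
\eqn{
  M_s\equiv \PP_{g_0}(\tau=\infty\mid\SF_s) = \PP_{u_s}(\tau=\infty),
}
the second equality via the strong Markov property (Theorem~\ref{THM:tribe}b). As a uniformly integrable bounded martingale, $M_s\to\1_A$ almost surely and in $L^2$, where $A=\{\tau=\infty\}$, whence
\eqn{
  \EE[\1_A(1-M_s)] = \PP(A)-\EE[M_s^2] \longrightarrow 0 \quad\text{as } s\to\infty.
}
Translation invariance of $\tau$ gives $p_0(f)\equiv\PP_f(\tau<\infty)$ with $p_0(u_s(\cdot+R_0(s)))=p_0(u_s)=1-M_s$, so by the definition of $\nu_T$,
\eqn{
  \PP_{\nu_T}(\tau<\infty) = \int p_0\,d\nu_T = \frac{1}{T\,\PP(A)}\!\int_0^T\!\EE[\1_A(1-M_s)]\,ds \longrightarrow 0
}
by Ces\`{a}ro averaging; in particular $\PP_{\nu_{T_n}}(u(t)\equiv 0)\leq \PP_{\nu_{T_n}}(\tau<\infty)\to 0$ for every fixed $t\geq 0$.

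The main obstacle is transferring this bound through the weak limit: $p_0$ is only upper semicontinuous on $\SC_{tem}^+$, so Portmanteau yields only the useless inequality $\int p_0\,d\nu\geq \limsup_n\int p_0\,d\nu_{T_n}=0$. I would close the gap by arguing that $p_0(f)=0$ for $\nu$-a.e.\ $f$, combining the uniform lower bound at the wavefront from the first claim with a comparison estimate: $\nu$-typical $f$ inherits enough left-tail mass to be stochastically dominated from below by an SPDE solution of effectively infinite initial mass, which survives with probability one. Once $p_0=0$ $\nu$-a.s., $\PP_\nu(\tau<\infty)=\int p_0\,d\nu=0$, and hence $\PP_\nu(u(t)\not\equiv 0)=1$ for every $t\geq 0$, since $\{u(t)\equiv 0\}\subseteq\{\tau<\infty\}$.
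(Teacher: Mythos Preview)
Your argument for $R_0(f)\le 0$ is fine and equivalent to the paper's. The real content lies in the other two claims, and in both places your proposal has a genuine gap that hinges on the same missing ingredient.

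For $R_0(f)\ge 0$, showing $\inf_T\EE_{\nu_T}[1\wedge\langle f,\phi_\epsilon\rangle]>0$ is not enough: it only says that mass in $[-\epsilon,0]$ occurs with \emph{positive} probability, whereas you need it with probability \emph{one}. Your ``zero--one argument via the strong Markov property'' is not spelled out, and I do not see how it would upgrade a positive-probability statement to an almost-sure one here; there is no obvious tail event. What the paper actually proves (Lemma~\ref{LEM:mass_somewhere_in_limit}) is the quantitative bound
\[
\sup_{T}\PP_{\nu_T}\!\big(\langle u_t(\cdot+R_0(u_t)),\1_{(-2a,\infty)}\rangle<m\big)
\]
can be made arbitrarily small by taking $m$ small (with $b$ tuned to $a$). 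This is the right direction: it bounds the probability of \emph{little} mass near the wavefront, uniformly in $T$. The proof of that lemma is not soft --- it uses the left-upper-measure estimate of Proposition~\ref{PRO:bound_on_u_star_l_t} to control how far the wavefront can jump in a short time $b$, together with the superprocess extinction bound~\eqref{equ:bound_die_out_t}. With this in hand, one enlarges $\{R_0(f)\le -a\}$ to the open event $\{\langle u_0,\phi_{0,\lambda}\rangle<e^{-\lambda a}M\}$, applies Portmanteau in the correct direction, and then makes the bound small.

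For non-extinction, your martingale computation $\PP_{\nu_{T_n}}(\tau<\infty)\to 0$ is correct and elegant, but the transfer to $\nu$ does not go through as written. Your proposed fix --- that $\nu$-typical $f$ has enough left-tail mass to force $p_0(f)=0$ --- is circular (it presumes structural information about $\nu$ that you are trying to establish) and relies on the unproved claim that infinite initial mass implies almost-sure survival for \eqref{equ:SPDE}. The paper sidesteps this entirely: it enlarges $\{u_t\equiv 0\}$ to the open set $\{\langle u_t,\phi_1\rangle<e^{-2A}m\}$, passes to $\liminf$ over $\nu_{T_n}$ by Portmanteau, splits off $\{|R_0(t)|>A\}$ via Lemma~\ref{LEM:3_6}, and controls the remaining piece with Lemma~\ref{LEM:mass_somewhere_in_limit}. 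In short, both gaps are filled by the same lemma, and that lemma is the piece you are missing.
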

%
% ----------------------------------------------------------------------
%

The uniqueness of the law of travelling wave solutions remains an open problem. Is the wavespeed deterministic and/or does it depend on $f_0, g_0$? In how far does it depend on $\theta>\theta_c$ and is it strictly positive (for $\theta$ big enough, cf. \cite[Proposition~4.1c)]{T1996})? In the latter case, does the same hold true for $\limsup_{t \rightarrow \infty} R_0(u(t))/t$? 

Survival is possible only if the overall mass and $R_0(t)-L_0(t)$ with $L_0(u(t)) \equiv L_0(t) \equiv\inf\{ x \in \RR: u(t,x) > 0 \}$ grow to infinity over time (see Proposition~\ref{PRO:overall_mass_and_support} below). Recall the definitions of \textit{recurrence, transience} and \textit{local extinction} of the support of a process from Pinsky~\cite[Definitions, p. 239--240]{P1996}. Here we formulate the definition of \textit{recurrence} in the context of SPDEs.
\begin{definition}
\label{DEF:recurrence}
The support of the process $u(t)$ is \textit{recurrent} if 
\eqn{
\lbeq{equ:def-recurrent}
  \PP_{u_0}\!\left( \int u(t,x) \1_B(x) dx > 0 \mbox{ for some } t \geq 0 \ \big| \ \tau=\infty \right) = 1
}
for every $u_0 \in \SC_c^+ \backslash \{0\}$ and every open set $B \subset \RR$.
\end{definition}
A consequence of the existence of travelling waves constructed from compact initial conditions is our second main result which shows that the solutions to the SPDE \eqref{equ:SPDE} have recurrent support. 
%
% ----------------------------------------------------------------------
%
\begin{theorem}
\label{THM:coming_back}
Let $\theta>\theta_c$ and $g_0 \in \SC_c^+ \backslash \{0\}$. Then the support of the process $u(t)$ is recurrent.
\end{theorem}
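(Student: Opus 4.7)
Fix $g_0 \in \SC_c^+ \setminus \{0\}$ and an open set $B = (a,b) \subset \RR$. If $B$ meets $\operatorname{supp}(g_0)$ the conclusion is immediate, so assume otherwise. The space-time white noise $W$ is invariant in law under $x \mapsto -x$, so the law of $u$ under $\PP_{g_0}$ agrees with that of $\tilde u(t,x) \equiv u(t,-x)$ under $\PP_{\tilde g_0}$ with $\tilde g_0(x) = g_0(-x)$; this lets me reduce to the case $B \subset (R_0(g_0),\infty)$. I aim to establish
\eqn{
\lbeq{equ:star}
\PP_{g_0}\!\left(\limsup_{t\to\infty} R_0(t) = +\infty \ \big| \ \tau = \infty\right) = 1.
}
Given \eqref{equ:star}, let $\varsigma \equiv \inf\{t \geq 0 : R_0(t) > a\}$, which is finite on $\{\limsup_t R_0(t) = +\infty\} \cap \{R_0(0) < a\}$. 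Continuity of $u$ in $(t,x)$ forces $R_0(\varsigma) \leq a$: any $u(\varsigma,y) > 0$ with $y > a$ would, by continuity, give $u(s,y) > 0$ for $s$ slightly less than $\varsigma$, contradicting the definition of $\varsigma$. Since the noise and drifts in \eqref{equ:SPDE} vanish where $u = 0$, mass can enter $(a,\infty)$ only by Laplacian propagation from $\{u > 0\} \subset (-\infty,a]$ at times $\leq \varsigma$, which spreads at a finite rate; hence for $t$ slightly greater than $\varsigma$ we have $R_0(t) \in (a,b)$, $u(t,\cdot)$ is strictly positive on a subinterval of $B$ just below $R_0(t)$, and $\int u(t,x)\1_B(x)\, dx > 0$.

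For \eqref{equ:star}, let $\nu$ be any weak subsequential limit of $\{\nu_{T_n}\}$ provided by Theorem~\ref{THM:trav_wave_exists}. Proposition~\ref{PRO:limit_never_zero} says $\nu$ is concentrated on $\{f \not\equiv 0 : R_0(f) = 0\}$, so every $\nu$-typical $f$ has strictly positive mass arbitrarily close to $0$ from the left. I would therefore choose $\epsilon, \tau, p > 0$ and a Borel set $A \subset \SC_{tem}^+$ with $\nu(A) \geq 1/2$ on which, for each $f \in A$, a short comparison of \eqref{equ:SPDE} with the Brownian semigroup and the $\theta u$ drift (offsetting the $-u^2$ loss) yields $\PP_f(R_0(u(\tau)) > \epsilon) \geq p$. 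Definition~\ref{DEF:nu_T} and $\nu_{T_n} \Rightarrow \nu$ then give
\eqn{
\lbeq{equ:cesaro}
T_n^{-1} \int_0^{T_n} \PP_{g_0}\!\left(u(s,\cdot + R_0(s)) \in A \ \big| \ \tau = \infty\right) ds \;\longrightarrow\; \nu(A) \geq \tfrac{1}{2}.
}

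The hard part is to upgrade the time-averaged statement \eqref{equ:cesaro} to an almost sure statement about the sample path of $R_0$. My intended route: define a sequence of stopping times $\sigma_1 < \sigma_2 < \cdots$ selecting instants at which $u(\sigma_k,\cdot + R_0(\sigma_k)) \in A$; use \eqref{equ:cesaro}, combined with the a.s.\ growth $R_0(t) - L_0(t) \to \infty$ on $\{\tau = \infty\}$ (Proposition~\ref{PRO:overall_mass_and_support}) and a zero-one argument, to show $\sigma_k < \infty$ for all $k$ on $\{\tau = \infty\}$; apply the strong Markov property of Theorem~\ref{THM:tribe}b) at each $\sigma_k$ to obtain $\PP(R_0(\sigma_k + \tau) \geq R_0(\sigma_k) + \epsilon \mid \SF_{\sigma_k}) \geq p$; and conclude by a conditional Borel-Cantelli argument that $R_0(\sigma_k) \to +\infty$. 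The principal obstacle is precisely this translation from a time-averaged marginal statement to pathwise recurrence of the wavefront; the decisive input is the non-triviality of every subsequential limit $\nu$ supplied by Proposition~\ref{PRO:limit_never_zero}.
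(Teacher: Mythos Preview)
Your strategy---reduce to $\limsup_{t\to\infty} R_0(t)=+\infty$ on $\{\tau=\infty\}$ and use the non-triviality of $\nu$ to locate recurrent ``good'' configurations near the wavefront---parallels the paper's. But the final step has a genuine gap: conditional Borel--Cantelli from $\PP(R_0(\sigma_k+\tau)\ge R_0(\sigma_k)+\epsilon\mid\SF_{\sigma_k})\ge p$ yields only that $\epsilon$-increments occur infinitely often, not that $R_0(\sigma_k)\to+\infty$. Nothing prevents $R_0$ from retreating between $\sigma_k+\tau$ and $\sigma_{k+1}$; the wavefront could oscillate in a bounded range while every one of your events holds. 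You correctly flag this upgrade from time-averaged to pathwise as the crux, but the proposed Borel--Cantelli step does not close it.

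The paper fills this gap with two further ideas. Lemma~\ref{LEM:for-all-outside-dom} upgrades ``dominates some small $\phi_0$ near the wavefront infinitely often'' to ``$u_{t+1}(\cdot+R_0(t))\ge\psi_0$ and $u_{t+1}(\cdot+L_0(t))\ge\psi_0$ infinitely often'' for an \emph{arbitrary} fixed $\psi_0\in\SC_c^+$, via a geometric-trials argument exploiting that $R_0-L_0\to\infty$. Then Lemma~\ref{LEM:help-1} shows one can choose $\psi_0$ so large that $\PP_{\psi_0}(\exists t:R_0(t)>K\mid\tau=\infty)\ge 1-16\tilde\epsilon$ \emph{uniformly in} $K\ge 0$; its proof is a self-referential bootstrap in which, on the event that $R_0$ stays below $K$, one replants a fresh shifted copy of $\psi_0$ at the current wavefront at time $\tilde\tau_{N_0}+1$ and iterates, the contraction factor $\PP_{\psi_0}(A')<3/4$ coming from a symmetry argument. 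Thus each ``try'' in the paper's final restart argument plants an entire $\psi_0$ whose own wavefront, conditional on survival, is already known to be unbounded---replacing your fixed $\epsilon$-push by an event that directly forces $R_0$ past the target level. (Two smaller issues: your reduction to $B\subset(R_0(g_0),\infty)$ does not cover $B$ lying in a gap of $\operatorname{supp}(g_0)$ inside $(L_0(g_0),R_0(g_0))$; and the claim that $R_0$ cannot jump over $(a,b)$ at time $\varsigma$ requires the quantitative support-propagation estimates of Lemma~\ref{LEM:tribe_lemma_2_1}, not merely continuity of $u$.)
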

%
% ----------------------------------------------------------------------
%
We obtain as a by-product of the proof that the supremum of the process does not decrease to zero over time almost surely (see Lemma~\ref{LEM:for-all-outside-dom}). The idea of the proof is that ``a travelling wave comes back''. By monotonicity one obtains similar results for any initial condition in $\SC_{tem}^+$. This result is a first step in the direction of obtaining a more detailed view on the behaviour of surviving solutions to \eqref{equ:SPDE}, in particular in near-critical regions $\theta>\theta_c$ where no results except for \cite[Theorem~1]{HT2004} are known to the knowledge of the author. It remains an open problem to give a complete convergence result. 

The present article additionally introduces upper measures that allow for a stochastic domination of solutions to \eqref{equ:SPDE} starting in initial conditions of arbitrary support respectively support bounded to the left or right (cf. Remark~\ref{RMK:on_u_star_l}). In the first case we obtain a second construction for the unique translation invariant stationary distribution in the convergence result of \cite[Theorem~1]{HT2004}.
%
% ------------------------------------------------------------
%
\subsubsection{Comparison with the construction in \cite{T1996}}
%
% ------------------------------------------------------------
%
In \cite[Section~3]{T1996}, $f_0(x)=1 \wedge (-x \vee 0) \in \SC_{tem}^+$ is fixed as initial condition. Note that for $\theta>\theta_c$, $\PP_{f_0}(\tau=\infty)=1$, so there is no need to condition on survival. Instead of the wavefront marker $R_0(t)$, $R_1(t)=R_1(u(t)) \equiv \ln\left( \int \exp(x) u(t,x) dx \right)$ is used. Taking these changes into account, $\nu_T^{\tiny \cite{T1996}}$ and $\PP_{\nu_T}^{\tiny \cite{T1996}}$ are constructed as in Definition~\ref{DEF:nu_T} and Remark~\ref{RMK:def_nu_T} above. For $\theta>\theta_c$, tightness of the sequence $(\nu_T)^{\tiny \cite{T1996}}_{T \in \NN}$ is established in \cite[Lemma~3.7]{T1996}. By \cite[Theorem~3.8]{T1996} every subsequential limit $\nu^{\tiny \cite{T1996}}$ yields the law $\PP_{\nu^{\tiny \cite{T1996}}}$ of a travelling wave solution. 

As remarked in \cite{T1996}, the proof that the limit point is non-trivial seems easier using the wavefront marker $R_1$ rather than $R_0$. To establish the recurrence result in Theorem~\ref{THM:coming_back}, the use of the wavefront marker $R_0$ is more suited. As a result, a substantial part of the work to follow goes into establishing a result in the spirit of \cite[Lemma~3.5]{T1996}. By means of domination methods it is shown that the slope of the corresponding linear function only depends on $\theta$ but not on the initial condition $g_0 \in \SC_c^+$. Due to the use of the wavefront marker $R_0$, an additional argument becomes necessary to obtain the properties of any subsequential limit $\nu$ as detailed in Proposition~\ref{PRO:limit_never_zero}. 

%The particular shape of the function $f_0$ is essential in \cite[Lemma~3.5]{T1996} to show that the expectation of the wavefront marker $R_1(t)$ is bounded below by a linear function in $t$. As remarked in \cite{T1996}, the proof that the limit point is non-trivial seems easier using the wavefront marker $R_1$ rather than $R_0$. As we make use of initial conditions $g_0$ of compact support, a substantial part of the work goes into establishing a result in the spirit of \cite[Lemma~3.5]{T1996}. Here, the use of the wavefront-marker $R_0(t)$ turned out to be crucial to show by means of domination methods that the slope of the corresponding linear function only depends on $\theta$ but not on the initial condition $g_0 \in \SC_c^+$. Due to the use of the wavefront marker $R_0$, an additional argument becomes necessary to conclude $\PP_\nu(u_t \not \equiv 0)=1$ for all $t \geq 0$ for any subsequential limit $\nu$. 
%
%%%%%%%%%%%%%%%%%%%%%%%%%%%%%%%
%
\subsection{Outline}
%
%%%%%%%%%%%%%%%%%%%%%%%%%%%%%%%
%
The paper is organized as follows. In Section~\ref{SEC:self_duality_and_uim} useful technical properties are recalled and upper measures that allow for a stochastic domination of solutions to \eqref{equ:SPDE} are obtained. In particular, the unique translation invariant stationary distribution in the convergence result of \cite[Theorem~1]{HT2004} is obtained as the unique weak limit of a sequence of dominating measures. In Section~\ref{SEC:mass_and_support}, the blow up of the overall mass and support of a solution conditional on survival is established. Estimates on the right wavefront marker are derived in Section~\ref{SEC:estimates_wfm}. They are used in Section~\ref{SEC:construction_tw} to construct travelling wave solutions arising from initial conditions with compact support. We show in particular that the wavefront marker of the limiting solution is zero with probability one. Finally, in Section~\ref{SEC:recurrence}, the recurrence of the support of a solution conditional on survival is shown.
%
%%%%%%%%%%%%%%%%%%%%%%%%%%%%%%%
%
\section{Self duality and upper measures}
\label{SEC:self_duality_and_uim}
%
%%%%%%%%%%%%%%%%%%%%%%%%%%%%%%%
%

As detailed in \cite[Section~1.2]{HT2004}, the following \textit{self duality} relation for \eqref{equ:SPDE} holds: Let $u, v$ be independent solutions to \eqref{equ:SPDE} with $u_0 \in \SC_{tem}^+, v_0 \in \SC_c^+$. Then we have for $0 \leq s \leq t$,
\eqn{
\lbeq{equ:self-duality}
  \EE\!\left[ e^{-2 \langle u(t),v(0) \rangle} \right] = \EE\!\left[ e^{-2 \langle u(s),v(t-s) \rangle} \right]  = \EE\!\left[ e^{-2 \langle u(0),v(t) \rangle} \right]. 
}
We extend this to arbitrary $v_0 \in \SC_{tem}^+$ as outlined below. 

First approximate $v_0$ by a monotonically increasing sequence $\big\{ v_0^{(n)} \big\}_{n \in \NN}$ in $\SC_c^+$ such that $v_0^{(n)} \uparrow v_0$. Adapt the reasoning of \cite[(12)--(13)]{T1996}, based on a coupling technique of Barlow, Evans and Perkins~\cite{BEP1991}, inductively as follows. For $n=1$, let $u$ be a solution to \eqref{equ:SPDE} started at $u_0=v_0^{(1)}$ and defined on $(\Omega,\SF,\SF_t,\PP)$. Define a random $\SC_{tem}^+$-valued process by
\eqn{
\lbeq{equ:coupling-B}
  B(\omega)(t,x) = 2 u(\omega)(t,x).
}
Let (recall the notations $(\SC([0,\infty),\SC_{tem}^+),\SU,\SU_t,U(t))$ from below \eqref{equ:Ctem} and $\PP^{f,\alpha,\beta,\gamma}$ from Theorem~\ref{THM:tribe})
\eqan{
  & \Omega^{(2)} = \Omega \times \SC([0,\infty),\SC_{tem}^+), \quad \SF^{(2)} = \SF \times \SU, \quad \SF_t^{(2)} = \SF_t \times \SU_t \\
  & u^{(1)}(\omega,f) = u(\omega), \quad v(\omega,f) = U(f), \quad w(\omega,f) = u^{(1)}(\omega,f) + v(\omega,f). \nn
}
Then there is a unique probability $\PP^{(2)}$ on $(\Omega^{(2)},\SF^{(2)})$ such that for $F \in \SF, G \in \SU$,
\eqn{
\lbeq{equ:coupling-BEP}
  \PP^{(2)}(F \times G) = \int_\Omega \1_F(\omega) \;\PP^{\big( v_0^{(2)}-v_0^{(1)} \big),0,B(\omega),1}(G) \;\PP(d\omega).
}
The integrand on the right hand side is measurable by the continuity of the map $(f,\alpha,\beta,\gamma) \mapsto \PP^{f,\alpha,\beta,\gamma}$ (cf. Theorem~\ref{THM:tribe}). The techniques of \cite[Theorem~5.1]{BEP1991} show that $w$ solves \eqref{equ:SPDE} with $w_0=v_0^{(2)}$ (on a possibly enlarged probability space where $\Omega^{(2)}$ is replaced by $\Omega^{(2)} \times \SC([0,\infty),\SC_{tem}^+)$). The idea for the construction of $v$ is to add an independent white noise (to that for $u$) and to obtain $v$ (conditional on $u$) as a process with annihilation due to competition with $u$. To establish the existence of a white noise for $w$, a solution to \eqref{equ:SPDE} (cf. Notation~\ref{NOTA:tribe}), one may have to enlarge the probability space.

Now proceed inductively. For $n \geq 2$, let $u_0=v_0^{(n)}$, $(\Omega,\SF,\SF_t,\PP)=(\Omega^{(n)},\SF^{(n)},\SF_t^{(n)},\PP^{(n)})$ and use $ \PP^{\big( v_0^{(n+1)}-v_0^{(n)} \big),0,B(\omega),1}$ in the integrand of \eqref{equ:coupling-BEP} in the $n$-th step of the construction. By Ionescu-Tulcea's theorem (see for instance Klenke \cite[Theorem~14.32]{bK2014}) there exists a uniquely determined probability measure $\PP'$ on $\Omega \times (\SC([0,\infty),\SC_{tem}^+)^\NN$ such that for $F \in \SF, G \in \SU^{\otimes n}$,
\eqn{
\lbeq{equ:coupling-IT}
  \PP'\big(F \times G \times \big( \Largertimes_{i = n+1}^\infty \SC([0,\infty),\SC_{tem}^+) \big) \big) = \PP^{(n+1)}(F \times G).
}
Use the above inductive construction to obtain a coupled sequence of solutions $u^{(n)}$ to \eqref{equ:SPDE} on $\big( \Omega \times (\SC([0,\infty),\SC_{tem}^+)^\NN, \SF \times \SU^\NN, \PP' \big)$ with initial conditions $u_0^{(n)} = v_0^{(n)}$ satisfying $u_t^{(n)}(x) \leq u_t^{(n+1)}(x)$ for all $t \geq 0, x \in \RR$. Let $\overline{u}_t \equiv \uparrow \lim_{n \rightarrow \infty} u_t^{(n)}$. By \eqref{equ:coupling-IT} and the tightness of the sequence $\PP_{v_0^{(n)}}$ for $n \rightarrow \infty$, $\overline{u}$ has law $\PP_{v_0}$ (cf. Theorem~\ref{THM:tribe}). Equality in \eqref{equ:self-duality} then follows by using monotone convergence for each term separately. 

Note that we do not claim that $\overline{u}$ is a solution to \eqref{equ:SPDE} as we did not prove the existence of an appropriate adapted white noise for this limit. The uniqueness in law is sufficient for the required result \eqref{equ:self-duality}.
%
% ------------------------------------------------------------
%
\begin{remark}[monotonicity and domination by a superprocess]
\label{RMK:monotonicity-domination_super}
In what follows we frequently make use of the following two properties. Note in particular the discussion at the beginning of \cite[Section~2]{HT2004}.
\begin{enumerate}
\item[(i)] \textit{(monotonicity)} \\
Let $u_0, v_0 \in \SC_{tem}^+$ satisfy $u_0(x) \leq v_0(x)$ for all $x \in \RR$. Reason as above (use $\PP^{v_0-u_0,0,B(\omega),1}$ in \eqref{equ:coupling-BEP}) to see that on a common probability space, there exist solutions $u(t,x)$ and $v(t,x)$ to \eqref{equ:SPDE} with initial conditions $u_0$ respectively $v_0$ such that $u(t,x) \leq v(t,x)$ for all $t \geq 0, x \in \RR$ almost surely. Once again, the idea is to construct the increment $v-u$ as a process with annihilation due to competition with $u$ by means of an independent (of $u$) white noise. 

Next consider initial conditions $\underline{u}_0, \overline{u}_0^{(i)} \in \SC_{tem}^+, i \in \NN$ such that $\underline{u}_0(x) \leq \sum_{i \geq 1} \overline{u}_0^{(i)}(x)$ for all $x \in \RR$. Without loss of generality decompose $\underline{u}_0 = \sum_{i \geq 1} \underline{u}_0^{(i)}$ with $\underline{u}_0^{(i)} \in \SC_{tem}^+, i \in \NN$ satisfying $\underline{u}_0^{(i)}(x) \leq \overline{u}_0^{(i)}(x)$ for all $i \in \NN$. Then, on a common probability space, one can construct solutions $\underline{v}^{(I)}, \overline{u}^{(i)}, I, i \in \NN$ to \eqref{equ:SPDE} with initial conditions $\underline{v}_0^{(I)} \equiv\sum_{1 \leq i \leq I} \underline{u}_0^{(i)}, I \in \NN$ respectively $\overline{u}_0^{(i)}, i \in \NN$ such that 
\eqn{
\lbeq{equ:coupling-NN}
  \underline{u}_t \equiv \uparrow \lim_{I \rightarrow \infty} \underline{v}_t^{(I)} \ \mbox{ has law } \ \PP_{\underline{u}_0} \ \mbox{ and satisfies } \ \underline{u}_t(x) \leq \sum_{i \geq 1} \overline{u}_t^{(i)}(x) 
}
for all $t \geq 0, x \in \RR$ almost surely.

Indeed, we shortly outline how to extend the construction of \eqref{equ:coupling-B}--\eqref{equ:coupling-IT} to this case. Using monotonicity, let $\underline{v}^{(1)}, \overline{u}^{(1)}$ be a coupled pair of solutions on some probability space $(\Omega,\SF,\SF_t,\PP)$ to \eqref{equ:SPDE} started at $\underline{u}_0^{(1)}$ respectively $\overline{u}_0^{(1)}$ such that $\underline{v}^{(1)}_t(x) \leq \overline{u}^{(1)}_t(x)$ for all $t \geq 0, x \in \RR$ almost surely. Inductively construct solutions $\underline{v}^{(n+1)}, \overline{u}^{(n+1)}$ on $\Omega^{(n+1)} = \Omega \times (\SC([0,\infty),\SC_{tem}^+))^{2n}$, $n \in \NN$ by considering 
\eqan{
\lbeq{equ:coupling-NN-constr}
  u^{(n)}(\omega,f,g) = \underline{v}^{(n)}(\omega), \quad & \underline{v}(\omega,f,g) = U(f), \quad \underline{w}(\omega,f,g) = u^{(n)}(\omega,f,g) + \underline{v}(\omega,f,g), \\
  & \overline{v}(\omega,f,g) = U(g), \quad \overline{w}(\omega,f,g) = \underline{v}(\omega,f,g) + \overline{v}(\omega,f,g) \nn
}  
and for $F \in \SF$ and $G, H \in \SU$,
\eqan{
  & \PP^{(n+1)}(F \times G \times H) \\
  &= \int_{\Omega^{(n)}} \1_F(\omega) \int_{\SC_{tem}^+} \1_G(f) \;\PP^{\big( \overline{u}_0^{(n+1)} - \underline{u}_0^{(n+1)} \big),\overline{A}(\omega,f),\overline{B},f(\omega),1}(H) \;\PP^{\underline{u}_0^{(n+1)},0,\underline{B}(\omega),1}(df) \;\PP(d\omega) \nn
}
with the three random $\SC_{tem}^+$-valued processes $\underline{B}, \overline{A}$ and $\overline{B}$ given as
\eqan{
  & \underline{B}(\omega)(t,x) = 2 \underline{v}^{(n)}(\omega)(t,x), \\
  & \overline{A}(\omega,f)(t,x) = 2 \underline{v}^{(n)}(\omega)(t,x) f(t,x) \ \mbox{ and } \ \overline{B}(\omega,f)(t,x) = 2 f(t,x). \nn
}
Note that in this step, $\underline{w}, \overline{w}$ solve \eqref{equ:SPDE} with $\underline{w}_0 = \underline{v}_0^{(n+1)}$ respectively $\overline{w}_0 = \overline{u}_0^{(n+1)}$. The claim in \eqref{equ:coupling-NN} now follows from \eqref{equ:coupling-NN-constr} and $\underline{v}^{(1)}_t(x) \leq \overline{u}^{(1)}_t(x)$.
\item[(ii)] \textit{(domination by a superprocess for $\theta>0$)} \\
Recall from \cite[(7)]{HT2004} that there is a coupling of a solution $u$ to \eqref{equ:SPDE} starting in $u_0 \in \SC_{tem}^+$ with a solution $\bar{u}$ to 
\eqn{
  \partial_t \bar{u} = \partial_{xx} \bar{u} + \theta \bar{u} + \bar{u}^{\frac{1}{2}} dW, \quad \bar{u}_0=u_0
}
so that $u(t,x) \leq \bar{u}(t,x)$ for all $t \geq 0, x \in \RR$ almost surely. $\bar{u}$ is the density of a one-dimensional Dawson-Watanabe superprocess with constant mass creation $\theta$ (cf. \cite{bP2002}) and (use the notations $\overline{\PP}_{u_0}$ and $\bar{\tau}$ in what follows to indicate the use of a coupling with an appropriate $\bar{u}$)
\eqn{
\lbeq{equ:bound_die_out_t}
  \PP_{u_0}(\tau \leq t) \geq \overline{\PP}_{u_0}(\bar{\tau} \leq t) = \exp\!\left( \frac{-2\theta \langle u_0 , 1 \rangle}{1-e^{-\theta t}} \right)
}
(cf. \cite[Exercise~II.5.3]{bP2002}). Furthermore, reason as in the proof of \cite[(10)]{HT2004} to get for arbitrary $-\infty<a<b<\infty$ and $t>0$,
\eqn{
\lbeq{equ:bound_die_out_interval_t}
  \inf_{u_0 \in \SC_c^+: \mathrm{supp}(u_0) \subset [a,b]} \PP_{u_0}\!\left( \tau \leq t \right)
  \geq C(\theta,t,a,b)
  >0.
}
\end{enumerate}
\end{remark}
%
% ------------------------------------------------------------

The main result of \cite{HT2004} is that for $\theta>\theta_c$ there exists a unique translation-invariant stationary measure $\mu \in \CP(\SC_{tem}^+)$ with $\mu(\{ f: f \not\equiv 0 \}) = 1$ that is a stationary distribution for \eqref{equ:SPDE}. They give sufficient conditions for its domain of attraction in \cite[(6)]{HT2004}. Note that in \cite[(6)]{HT2004} the condition is uniform in $x$, where $\{ T_t: t \geq 0\}$ denotes the heat semigroup, and thus does not extend to compact initial conditions but includes for instance all positive constant functions. See the paragraph below \cite[(4)]{HT2004} for a motivation of the proof of this theorem. $\mu$ is further characterized by its Laplace functional
\eqn{
\lbeq{equ:upper-invariant-Laplace}
  \int e^{-2 \langle f , g \rangle} \mu(df) = \PP_g(\tau<\infty), \quad \mbox{ for } g \in \SC_c^+.
}
%
%We will extend this to $g \in \SC_{tem}^+$ below. 

In what follows we give a second construction of $\mu$ as the unique weak limit of a sequence $(\mu_T)_{T>0}$ for $T \rightarrow \infty$, where $\mu_T$ can be thought of as a dominating measure at time $T$ to any solution to \eqref{equ:SPDE} with $u_0 \in \SC_{tem}^+$ (look ahead to Corollary~\ref{COR:coupling_dominated_by_upper} for a precise statement of this result).
%
% ------------------------------------------------------------
%
\begin{proposition} 
\label{PRO:tightness_T}
Let $T>0$ be fixed and $\psi_N \in \SC_{tem}^+$ such that $\psi_N(x) \uparrow \infty$ as $N \rightarrow \infty$ for all $x \in \RR$. Then $\SL\!\left( u_T^{(\psi_N)} \right) \Rightarrow \mu_T$ for $N \rightarrow \infty$ in $\CP\!\left( \SC_{tem}^+ \right)$, where $\mu_T$ is uniquely characterized by its Laplace functional
\eqn{
\lbeq{equ:Laplace-u-T}
  \int e^{-2 \langle f,g \rangle} \mu_T(df) = \PP\!\left( \tau^{(g)} \leq T \right), \quad \mbox{ for all } g \in \SC_{tem}^+
}
with
\eqn{
  \tau^{(g)} \equiv \inf\!\left\{ t \geq 0: u_t^{(g)} \equiv 0 \right\}. 
}
\end{proposition}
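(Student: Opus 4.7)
The approach is to compute the Laplace functional of $u_T^{(\psi_N)}$ on test functions $g\in\SC_c^+$ using the extended self-duality relation \eqref{equ:self-duality}, pass to $N\to\infty$ using that $\psi_N\uparrow\infty$ pointwise, identify $\mu_T$ as the unique probability measure with the resulting Laplace functional, and finally extend the identity to $g\in\SC_{tem}^+$ by approximation.

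For every $g\in\SC_c^+$ the extended self-duality gives
\begin{equation*}
\EE\!\bigl[e^{-2\langle u_T^{(\psi_N)},g\rangle}\bigr] = \EE\!\bigl[e^{-2\langle \psi_N,u_T^{(g)}\rangle}\bigr].
\end{equation*}
By the compact support property $u_T^{(g)}\in\SC_c^+$ almost surely. On $\{\tau^{(g)}>T\}$ the continuous non-negative function $u_T^{(g)}$ does not vanish identically and is thus strictly positive on some non-empty open set; combined with $\psi_N\uparrow\infty$ pointwise, monotone convergence gives $\langle\psi_N,u_T^{(g)}\rangle\uparrow\infty$, while on $\{\tau^{(g)}\le T\}$ the pairing vanishes. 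Bounded convergence therefore yields
\begin{equation*}
\lim_{N\to\infty}\EE\!\bigl[e^{-2\langle u_T^{(\psi_N)},g\rangle}\bigr] = \PP(\tau^{(g)}\le T),\qquad g\in\SC_c^+.
\end{equation*}

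To upgrade this to weak convergence in $\CP(\SC_{tem}^+)$, I would realise all $u_T^{(\psi_N)}$ on one probability space monotonically by Remark~\ref{RMK:monotonicity-domination_super}(i) and set $u_T^{(\infty)}(x):=\sup_N u_T^{(\psi_N)}(x)\in[0,\infty]$. Monotone convergence combined with the preceding display identifies the Laplace functional of $u_T^{(\infty)}$ on $\SC_c^+$; taking $g=\varepsilon h$ with $h\in\SC_c^+$ and letting $\varepsilon\downarrow 0$ shows $\langle u_T^{(\infty)},h\rangle<\infty$ a.s., so $u_T^{(\infty)}$ is almost surely locally integrable. Assuming one can prove $u_T^{(\infty)}\in\SC_{tem}^+$ almost surely with uniform-in-$N$ control on $\|u_T^{(\psi_N)}\|_\lambda$ for every $\lambda>0$ (the hard step, see below), Dini's theorem on each compact $[-R,R]$ together with the weight $e^{-\lambda|x|}$ gives $u_T^{(\psi_N)}\to u_T^{(\infty)}$ in the $\SC_{tem}^+$-metric almost surely, hence $\SL(u_T^{(\psi_N)})\Rightarrow\mu_T:=\SL(u_T^{(\infty)})$. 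Uniqueness of $\mu_T$ follows because $\{f\mapsto e^{-2\langle f,g\rangle}:g\in\SC_c^+\}$ is a determining class of bounded continuous functions on $\SC_{tem}^+$. The extension of \eqref{equ:Laplace-u-T} to $g\in\SC_{tem}^+$ is then routine: approximate $g$ from below by $g_M:=g\chi_M\in\SC_c^+$ for smooth compactly supported cut-offs $\chi_M\uparrow 1$; monotone convergence under $\mu_T$ handles the left-hand side, while the monotone coupling yields $u_T^{(g_M)}\uparrow u_T^{(g)}$ pointwise, so $\{u_T^{(g)}\equiv 0\}=\bigcap_M\{u_T^{(g_M)}\equiv 0\}$ and $\PP(\tau^{(g_M)}\le T)\downarrow\PP(\tau^{(g)}\le T)$.

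The main obstacle is verifying that $u_T^{(\infty)}\in\SC_{tem}^+$ almost surely, together with uniform-in-$N$ tail and modulus-of-continuity bounds on $u_T^{(\psi_N)}$. Direct domination by a super-Brownian motion via Remark~\ref{RMK:monotonicity-domination_super}(ii) is of no use since that dominator is itself started from $\psi_N\uparrow\infty$ and explodes at $t=0$; instead one must exploit the self-regularising effect of the competition term $-u^2$ in \eqref{equ:SPDE} (the ``coming down from infinity'' phenomenon) to produce uniform moment and continuity estimates on $u_T^{(\psi_N)}$ at the positive time $T$. These can plausibly be obtained from the Laplace functional identity itself (continuity at $g=0$ controlling the one-dimensional marginals), or by direct mild-solution PDE estimates that use the $-u^2$ damping together with the heat kernel.
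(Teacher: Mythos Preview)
Your identification of the Laplace functional via self-duality and the extension to $g\in\SC_{tem}^+$ by monotone approximation are correct and essentially the same as the paper's argument for those steps.

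The difference lies in how to obtain weak convergence in $\CP(\SC_{tem}^+)$, and here you have correctly located the missing ingredient but not supplied it. The paper does not attempt your monotone-limit-plus-Dini route; instead it proves tightness of $\{\SL(u_T^{(\psi_N)})\}_N$ directly via the Kolmogorov criterion on $\SC_{tem}^+$, using two estimates from \cite{T1996} that are \emph{uniform in the initial condition}: \cite[Lemma~3.3]{T1996} gives $\EE[\langle u_T^{(\psi_N)},\phi\rangle^p]\le C(\theta,p,T)$ for a suitable exponentially decaying $\phi$, and \cite[Lemma~3.4]{T1996} gives $\EE[|u_T^{(\psi_N)}(x)-u_T^{(\psi_N)}(x')|^p]\le C(\theta,p,T)|x-x'|^{p/2-1}$. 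These are precisely the ``coming down from infinity'' estimates you allude to at the end; they are already available in the literature and do all the work. With tightness in hand, the Laplace functional computation identifies every subsequential limit and hence the full limit.

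Your Dini approach would also go through once you have these same bounds (the uniform modulus-of-continuity estimate is what forces $u_T^{(\infty)}$ to be continuous, and the uniform $\langle\cdot,\phi\rangle^p$ bound controls the tails), but at that point the Kolmogorov tightness argument is shorter and avoids the need to realise everything on a single probability space. So the gap in your write-up is not a wrong idea but a missing citation: invoke \cite[Lemmas~3.3 and 3.4]{T1996} and the Kolmogorov criterion, and the proof is complete.
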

%
% ------------------------------------------------------------
%
\begin{proof}
The Kolmogorov tightness criterion for a sequence of laws of $\SC_{tem}^+$ valued random variables $(X_N)_{N \in \NN}$ is stated in \cite[(2) and below]{T1996}: it is sufficient to show that
\begin{enumerate}
\item[(i)] $\left\{ \SL\!\left( \langle X_N,e^{-|\cdot|} \rangle \right) \right\}_{N \in \NN}$ is tight,
\item[(ii)] for all $\lambda>0$ there exist $C<\infty, p>0, \gamma>1, \xi<\lambda$ such that for all $N \in \NN$,
\eqn{
  \EE\!\left[ |X_N(x)-X_N(x') |^p \right] \leq C |x-x'|^\gamma e^{\xi p |x|} \mbox{ for all } |x-x'| \leq 1.
}
\end{enumerate}

To check condition (i) first let
\eqn{
\lbeq{cond_i_1}
  \phi(x) \equiv e^{1-\frac{1}{2}\sqrt{1+x^2}} \geq e^{-|x|} 
}
so that $\phi \in \SC^2$, $\phi>0$. Apply \cite[Lemma~3.3]{T1996} in what follows. First observe that $\phi \in \Phi \equiv \{ f: \normx{f}{\lambda} < \infty$ for some $\lambda<0 \}$, the space of functions with exponential decay, where $\normx{f}{\lambda} = \sup_x |f(x)| \exp(-\lambda|x|)$. Moreover,
\eqn{
  \phi_x(x) = -\frac{\phi(x)}{2} \frac{x}{\sqrt{1+x^2}} \mbox{ and } \phi_{xx}(x) = \phi(x) \left( \frac{x^2}{4(1+x^2)} - \frac{1}{ 2 (1+x^2)^{3/2} } \right) \in \Phi 
}
and 
\eqan{
  \alpha \equiv
  & \bignorm{ \frac{\phi_{xx}(x)}{\phi(x)} } \leq \frac{1}{4} \bignorm{ \frac{x^2}{1+x^2} } + \frac{1}{2} \bignorm{ \frac{1}{ (1+x^2)^{3/2} } } \leq 1 < \infty, \\
  \beta \equiv& \norm{\phi} = e^{1/2}, \nn\\
  \gamma \equiv& \langle \phi,1 \rangle \leq \langle e^{1-\frac{1}{2}|\cdot|},1 \rangle = 2 e \int_0^\infty e^{-x/2} dx = 4e < \infty. \nn
}
Hence \cite[Lemma~3.3]{T1996} yields for all $p \geq 2, N \in \NN$ that
\eqn{
  \EE\!\left[ \left( \big\langle u_T^{(\psi_N)},e^{-|\cdot|} \big\rangle \right)^p \right] \leq \EE\!\left[ \left( \big\langle u_T^{(\psi_N)},\phi \big\rangle \right)^p \right] \leq C(\theta,p,T)
}
and we obtain
\eqn{
\lbeq{cond_i_2}
  \PP\!\left( \big\langle u_T^{(\psi_N)},e^{-|\cdot|} \big\rangle \geq K \right) 
  \leq \PP\!\left( \big\langle u_T^{(\psi_N)},\phi \big\rangle \geq K \right) 
  \leq \frac{1}{K^p} \EE\!\left[ \left( \big\langle u_T^{(\psi_N)},\phi \big\rangle \right)^p \right] \leq \frac{C(\theta,p,T)}{K^p} \stackrel{K \rightarrow \infty}{\rightarrow} 0. 
}

\cite[Lemma~3.4]{T1996} yields for all $\theta > 0, p \geq 2$ and $|x-x'| \leq 1$,
\eqn{
\lbeq{equ:bound_on_p_diff}
  \EE\!\left[ \left| u^{(\psi_N)}_T(x)-u^{(\psi_N)}_T(x') \right|^p \right] \leq C(\theta,p,T) |x-x'|^{p/2-1}.
}
Choose $p>4$ to obtain (ii) with $C=C(\theta,p,T), \gamma \equiv p/2-1>1$ and $\xi \equiv 0<\lambda$ for $\lambda>0$ arbitrary. It follows that $\left( \SL\!\left( u_T^{(\psi_N)} \right) \right)_{N \in \NN}$ is tight in $\CP\!\left( \SC_{tem}^+ \right)$.

Finally we show that $\SL\!\left( u_T^{(\psi_N)} \right) \stackrel{N \rightarrow \infty}{\Rightarrow} \mu_T$. To this goal we investigate the Laplace functional of the subsequential limits of $\SL\!\left( u_T^{(\psi_N)} \right)$: Let $(N_k)_k$ be a subsequence such that $\SL\!\left( u_T^{(\psi_{N_k})} \right) \stackrel{k \rightarrow \infty}{\Rightarrow} \mu_T$ in $\CP\!\left( \SC_{tem}^+ \right)$. We obtain for $\phi \in \SC_c^+$ arbitrary 
\eqn{
\lbeq{equ:proof_sup_of_initial} 
  \int e^{-2 \langle f,\phi \rangle} \mu_T(df)
  = \lim_{k \rightarrow \infty} \EE\!\left[ e^{-2 \langle u_T^{(\psi_{N_k})},\phi \rangle} \right] 
  = \lim_{k \rightarrow \infty} \EE\!\left[ e^{-2 \langle \psi_{N_k},u_T^{(\phi)} \rangle} \right] 
  = \PP\!\left( \langle u_T^{(\phi)},1 \rangle=0 \right). 
}
Here we used the duality from \eqref{equ:self-duality} in the second equality and the definition of $\psi_N$ together with dominated convergence in the last step. Use the definition of $\tau^{(\phi)}$ to rewrite the above as
\eqn{
\lbeq{equ:upper_limit_t}
  \int e^{-2 \langle f,\phi \rangle} \mu_T(df) 
  = \PP\!\left( \tau^{(\phi)} \leq T \right).
}
Uniqueness of the limit $\mu_T$ follows as the Laplace functional uniquely characterizes a measure.

It remains to show that \eqref{equ:upper_limit_t} holds for all $\phi \in \SC_{tem}^+$. Let $\phi_n \in \SC_c^+, \phi_n \uparrow \phi$. By monotonicity, $\PP\!\left( \tau^{(\phi)} \leq T \right) \leq \PP\!\left( \tau^{(\phi_n)} \leq T \right)$ for all $n \in \NN$. By the continuity of $f \mapsto \PP_f$ on $\SC_{tem}^+$, $\limsup_{n \rightarrow \infty} \PP_{\phi_n}( u_T \in \{0\} ) \leq \PP_\phi( u_T \in \{0\} ) = \PP(\tau^{(\phi)} \leq T)$ and $\lim_{n \rightarrow \infty} \PP\!\left( \tau^{(\phi_n)} \leq T \right) = \PP(\tau^{(\phi)} \leq T)$ follows. Use dominated convergence to pass to the limit in the left hand side of \eqref{equ:upper_limit_t}. 
\end{proof}
%
% ------------------------------------------------------------
%
\begin{remark}
\label{RMK:upper_inv_time_change}
Let $T>0$ be arbitrarily fixed. By Theorem~\ref{THM:tribe}, $\SL\!\left( u_t^{(\mu_T)} \right) = \mu_{T+t}$ for all $t \geq 0$. Indeed, use self-duality to see that the Laplace functionals coincide: for all $\phi \in \SC_{tem}^+$,
\eqan{
  \EE_{\mu_T}\!\left[ e^{-2 \langle u_t , \phi \rangle} \right] 
  &= \int \EE_f\!\left[ e^{-2 \langle u_t , \phi \rangle} \right] \mu_T(df)
  = \int \EE_\phi\!\left[ e^{-2 \langle u_t , f \rangle} \right] \mu_T(df)
  = \EE_\phi\!\left[ \int e^{-2 \langle u_t , f \rangle} \mu_T(df) \right] \\
  &= \EE_\phi\!\left[ \PP_{u_t}( \tau \leq T) \right]
  = \PP_\phi( \tau \leq T+t)
  = \int e^{-2 \langle f,\phi \rangle} \mu_{T+t}(df) \nn
}
holds. 
\end{remark}
%
% ----------------------------------------------------------------------
%
\begin{proposition} 
\label{PRO:u_star}
For $T> 0$ fixed, $\left\{ \mu_{T+t}, t \geq 0 \right\}$ is tight in $\CP(\SC_{tem}^+)$. In particular, $\mu_{T+t} \Rightarrow \mu$ for $t \rightarrow \infty$ in $\CP\!\left( \SC_{tem}^+ \right)$ where $\mu$ satisfies \eqref{equ:upper-invariant-Laplace} for all $g \in \SC_c^+$.
\end{proposition}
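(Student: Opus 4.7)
My plan is to establish tightness of $\{\mu_{T+t}\}_{t\geq 0}$ in $\CP(\SC_{tem}^+)$ by adapting the Kolmogorov criterion from the proof of Proposition~\ref{PRO:tightness_T}, and then to identify every weak subsequential limit with $\mu$ through its Laplace functional on $\SC_c^+$. Once both are in hand, the convergence $\mu_{T+t}\Rightarrow\mu$ follows at once because tightness together with a unique subsequential limit forces convergence of the full family.

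Identification is the easier half, and I would do it first to pin down the target. By \eqref{equ:Laplace-u-T}, for every $g\in\SC_c^+$ one has $\int e^{-2\langle f,g\rangle}\mu_{T+t}(df)=\PP(\tau^{(g)}\leq T+t)$, and this quantity increases monotonically to $\PP(\tau^{(g)}<\infty)$ as $t\to\infty$. If $\mu_{T+t_n}\Rightarrow\mu^\star$ along some subsequence, then since $f\mapsto e^{-2\langle f,g\rangle}$ is bounded by $1$ and continuous on $\SC_{tem}^+$ for $g\in\SC_c^+$, weak convergence delivers $\int e^{-2\langle f,g\rangle}\mu^\star(df)=\PP(\tau^{(g)}<\infty)$, which is precisely \eqref{equ:upper-invariant-Laplace}. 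Because Laplace functionals evaluated at $\SC_c^+$-test functions determine probability measures on $\SC_{tem}^+$ uniquely (the same uniqueness argument that closes the proof of Proposition~\ref{PRO:tightness_T}), this forces $\mu^\star=\mu$.

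The substantive step is verifying the two conditions of the Kolmogorov criterion \emph{uniformly in} $t\geq 0$: (i) tightness of $\SL(\langle f,e^{-|\cdot|}\rangle)$ under $\mu_{T+t}$, and (ii) the H\"older moment bound $\EE_{\mu_{T+t}}[|f(x)-f(x')|^p]\leq C|x-x'|^{\gamma}e^{\xi p|x|}$. Both reduce to moment estimates already supplied by \cite[Lemma~3.3]{T1996} and \cite[Lemma~3.4]{T1996}, whose constants have the form $C(\theta,p,s)$ and, crucially, do not depend on the initial datum (this is the very uniformity that allowed $\psi_N\uparrow\infty$ inside the proof of Proposition~\ref{PRO:tightness_T}). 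For $t\geq 1$, Remark~\ref{RMK:upper_inv_time_change} and the strong Markov property of Theorem~\ref{THM:tribe} give $\mu_{T+t}=\SL(u_1^{(f_0)})$ with $f_0\sim\mu_{T+t-1}$, whence $\EE_{\mu_{T+t}}[\langle f,\phi\rangle^p]=\EE_{\mu_{T+t-1}}\bigl[\EE_{f_0}[\langle u_1,\phi\rangle^p]\bigr]\leq C(\theta,p,1,\phi)$, with the pointwise increment bound for (ii) handled identically via \cite[Lemma~3.4]{T1996}; for $t\in[0,1]$ the estimate $C(\theta,p,T+1,\cdot)$ applies directly since $T$ is fixed. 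Taking the maximum of the two regimes furnishes the desired uniform bounds.

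The main obstacle is precisely the initial-datum-free character of the moment estimates of \cite[Lemma~3.3]{T1996}, \cite[Lemma~3.4]{T1996} on which this Markov truncation relies. This feature is ultimately a manifestation of the self-regulating $-u^2$ term in \eqref{equ:SPDE}; the superprocess domination of Remark~\ref{RMK:monotonicity-domination_super}(ii) alone would only yield constants that grow exponentially with $t$ and hence could not close the tightness argument. Once the uniform-in-$t$ Kolmogorov conditions have been verified, combining tightness with the Laplace-functional identification yields both the tightness claim and the convergence $\mu_{T+t}\Rightarrow\mu$, with $\mu$ satisfying \eqref{equ:upper-invariant-Laplace} on $\SC_c^+$ as recorded in the identification step.
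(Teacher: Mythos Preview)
Your proposal is correct and follows essentially the same approach as the paper's proof. The paper streamlines your case split by writing $\mu_{T+t}=\SL\big(u_{T/2}^{(\mu_{t+T/2})}\big)$ for all $t\geq 0$ (via Remark~\ref{RMK:upper_inv_time_change}) and applying \cite[Lemmas~3.3--3.4]{T1996} at the single fixed time $T/2$, which yields the uniform Kolmogorov bounds in one stroke rather than two regimes; the identification of the limit via Laplace functionals is identical to yours.
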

%
% ------------------------------------------------------------
%
\begin{proof}
We obtain from \cite[Lemma~3.4]{T1996} and Remark~\ref{RMK:upper_inv_time_change} for all $\theta > 0, p \geq 2, |x-x'| \leq 1$ and $t \geq 0$,
\eqn{
  \int \left| f(x)-f(x') \right|^p \mu_{T+t}(df)
  = \int \EE_f\!\left[ \left| u_{T/2}(x)-u_{T/2}(x') \right|^p \right] \mu_{t+T/2}(df) 
  \leq C(\theta,p,T) |x-x'|^{p/2-1}. 
}
Similar reasoning, using \cite[Lemma~3.3]{T1996} yields $\int ( \langle f,e^{-|\cdot|} \rangle )^p \mu_{T+t}(df) \leq C(\theta,p,T)$. Reason as at the beginning of the proof of Proposition~\ref{PRO:tightness_T} to obtain (i)--(ii) of the tightness-conditions applied to $\{\mu_{T+t}, t \geq 0\}$. It follows that $\left\{ \mu_{T+t}, t \geq 0 \right\}$ is tight in $\CP\!\left( \SC_{tem}^+ \right)$ for $T>0$ arbitrarily fixed. The reminder of the proof is analogous to the proof of Proposition~\ref{PRO:tightness_T} once we observe that for any $\mu_{T_k} \Rightarrow \nu \in \CP( \SC_{tem}^+ )$ with $T_k \rightarrow \infty$ we have for $\phi \in \SC_c^+$ arbitrary,
\eqn{
\lbeq{equ:Laplace_X}
  \int e^{-2 \langle f,\phi \rangle} \nu(df) 
  = \lim_{k \rightarrow \infty} \int e^{-2 \langle f,\phi \rangle} \mu_{T_k}(df) 
  \stackrel{\eqref{equ:upper_limit_t}}{=} \lim_{k \rightarrow \infty} \PP\!\left( \tau^{(\phi)} \leq T_k \right) 
  = \PP\!\left( \tau^{(\phi)} < \infty \right), 
}
the last by monotone convergence. This concludes the proof.
\end{proof}
%
% ------------------------------------------------------------
%

We obtain in particular the following from the above proofs.
%
% ------------------------------------------------------------
%
\begin{remark}
Use self-duality to get for all $T>0$ and $\phi \in \SC_{tem}^+$,
\eqn{
  \int e^{-2 \langle f,\phi \rangle} \mu_T(df) = \PP\!\left( \tau^{(\phi)} \leq T \right) = \sup_{\psi \in \SC_{tem}^+} \EE\!\left[ e^{-2<u_T^{(\psi)},\phi>} \right] 
}
and for all $\phi \in \SC_c^+$,
\eqn{
  \int e^{-2 \langle f,\phi \rangle} \mu(df) = \PP\!\left( \tau^{(\phi)} < \infty \right) = \lim_{t \rightarrow \infty} \sup_{\psi \in \SC_{tem}^+} \EE\!\left[ e^{-2<u_{T+t}^{(\psi)},\phi>} \right]. 
}
\end{remark}
%
% ----------------------------------------------------------------------
%
\begin{corollary}[Upper measure] 
\label{COR:coupling_dominated_by_upper}
Let $u_0 \in \SC_{tem}^+$ and $T>0$ be arbitrarily fixed. Then there exists a coupling of a solution $u^{(u_0)}$ to \eqref{equ:SPDE} and a random continuous process $(u^*_{T+t})_{t \geq 0}$ with values in $\SC_{tem}^+$ such that 
\eqn{
  u_{T+t}^{(u_0)}(x) \leq u_{T+t}^*(x) \quad \mbox{ for all } t \geq 0, x \in \RR \mbox{ almost surely}
}
and $\SL((u_{T+t}^*)_{t \geq 0})=\PP_{\mu_T}$. 
\end{corollary}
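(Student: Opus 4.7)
The plan is to approximate $u_0$ from above by a sequence $\psi_N \uparrow \infty$ in $\SC_{tem}^+$, transport the monotone coupling of Remark~\ref{RMK:monotonicity-domination_super}(i) to each pair $(u_0, \psi_N)$, and then extract a joint limit using Proposition~\ref{PRO:tightness_T} together with the strong Markov property. Concretely, set $\psi_N \equiv u_0 + N$, which lies in $\SC_{tem}^+$ (the norms $\normx{\cdot}{\lambda}$ tolerate adding a constant), satisfies $\psi_N \geq u_0$, and obeys $\psi_N(x) \uparrow \infty$ for every $x \in \RR$. For each $N$, Remark~\ref{RMK:monotonicity-domination_super}(i) yields a probability space carrying a pair of solutions $(u^{(N)}, v^{(N)})$ to \eqref{equ:SPDE} with $\SL(u^{(N)}) = \PP_{u_0}$, $\SL(v^{(N)}) = \PP_{\psi_N}$, and $u^{(N)}_t(x) \leq v^{(N)}_t(x)$ for all $t \geq 0$, $x \in \RR$ almost surely.

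Next I upgrade Proposition~\ref{PRO:tightness_T} from a fixed-time marginal statement to path-space convergence of the shifted dominating process. Since $\SL(v^{(N)}_T) \Rightarrow \mu_T$ in $\SC_{tem}^+$ and the map $f \mapsto \PP_f$ is continuous (Theorem~\ref{THM:tribe}b)), the strong Markov property at time $T$ gives
\eqn{
  \SL\!\big((v^{(N)}_{T+t})_{t \geq 0}\big) = \int \PP_f \, \SL(v^{(N)}_T)(df) \ \Rightarrow \ \int \PP_f \, \mu_T(df) = \PP_{\mu_T}
}
weakly in $\CP(\SC([0,\infty), \SC_{tem}^+))$, where the convergence uses that the mixture map $\nu \mapsto \int \PP_f \, \nu(df)$ is continuous on $\CP(\SC_{tem}^+)$ whenever $f \mapsto \PP_f$ is.

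Now view $X_N \equiv (u^{(N)}, (v^{(N)}_{T+t})_{t \geq 0})$ as a random element of the Polish space $E \equiv \SC([0,\infty), \SC_{tem}^+)^2$. The first marginal has constant law $\PP_{u_0}$ and the second is tight by the preceding display, so $\{\SL(X_N)\}_{N \in \NN}$ is tight on $E$. Extract a weakly convergent subsequence and invoke Skorokhod's representation theorem to realize the convergence almost surely on a common probability space, with limit $(u^{(u_0)}, w)$. The marginals of the limit are $\SL(u^{(u_0)}) = \PP_{u_0}$ and $\SL(w) = \PP_{\mu_T}$. Since $\{(f, g) \in \SC_{tem}^+ \times \SC_{tem}^+ : f(x) \leq g(x) \text{ for all } x \in \RR\}$ is closed, the pointwise inequality passes to the limit at each time, giving $u^{(u_0)}_{T+t}(x) \leq w_t(x)$ for all $t \geq 0$, $x \in \RR$ almost surely. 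Setting $u^*_{T+t} \equiv w_t$ yields the coupling asserted in the corollary.

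The chief technical subtlety is lifting Proposition~\ref{PRO:tightness_T} from the time-$T$ marginal to path space for $(v^{(N)}_{T+t})_{t \geq 0}$. Factoring through the strong Markov structure and the continuity of $f \mapsto \PP_f$ handles this cleanly and obviates any direct equicontinuity-in-time estimate for the dominating processes.
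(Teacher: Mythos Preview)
Your proof is correct but takes a different route from the paper. The paper avoids Skorokhod entirely: it fixes a monotone sequence $\psi_N \uparrow \infty$ with $\psi_1 \geq u_0$ and, via the iterated BEP/Ionescu--Tulcea construction of \eqref{equ:coupling-B}--\eqref{equ:coupling-IT}, builds the solutions $u^{(0)}, u^{(1)}, u^{(2)}, \ldots$ (started at $u_0, \psi_1, \psi_2, \ldots$) \emph{simultaneously} on a single probability space, monotonically ordered, and then simply sets $u^*_{T+t} \equiv \uparrow \lim_N u^{(N)}_{T+t}$ as a pointwise increasing limit whose law is identified via Proposition~\ref{PRO:tightness_T} and Theorem~\ref{THM:tribe}. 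This is shorter and keeps $u^{(0)}$ a genuine solution (with its driving white noise) on the coupling space throughout. Your route---separate pairwise couplings for each $N$, path-space weak convergence of $(v^{(N)}_{T+\cdot})$ via continuity of $f \mapsto \PP_f$, then tightness plus Skorokhod to extract a joint limit and pass the closed ordering condition through---is more abstract but equally valid; the lift from the time-$T$ marginal to path laws is handled cleanly. One small caveat: after Skorokhod your first component merely has law $\PP_{u_0}$ with no white noise exhibited, so strictly it is a process with the right law rather than a ``solution'' in the sense of Notation~\ref{NOTA:tribe}; this is repaired by the same enlargement-of-space argument the paper invokes after \eqref{equ:coupling-BEP}, and in any case does not affect any downstream use of the corollary.
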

%
% ------------------------------------------------------------
%
\begin{proof}
Choose a sequence $(\psi_N)_{N \in \NN}$ as in Proposition~\ref{PRO:tightness_T} such that $\psi_1 \geq u_0$. By reasoning as in \eqref{equ:coupling-B}--\eqref{equ:coupling-IT} one can construct a monotonically increasing sequence of solutions $u^{(0)}, u^{(N)}, N \in \NN$ to \eqref{equ:SPDE} with initial conditions $u_0$ respectively $\psi_N, N \in \NN$ on a common probability space. For $T>0$ fixed, let $u^*_{T+t}(x) \equiv \uparrow \lim_{N \rightarrow \infty} u^{(N)}_{T+t}(x), t \geq 0, x \in \RR$. Then $u^*_{T+\cdot}$ has law $\PP_{\mu_T}$ by Proposition~\ref{PRO:tightness_T} and Theorem~\ref{THM:tribe} 
\end{proof}
%
% ----------------------------------------------------------------------
%
\begin{remark}[Remark on notation]
Let $G$ be a function from $\SC_{tem}^+$ to $\RR$. In what follows we often use the notations $\SL(u_t^*)$ and $\EE[G(u_t^*)]$ in place of $\mu_t$ and $\int G(f) \mu_t(df)$ to remind the reader of the dominating property of $\mu_t$. 
\end{remark}
%
% ------------------------------------------------------------
%
\begin{remark}[Left upper measure and right upper measure] 
\label{RMK:on_u_star_l} 
\hfill
\begin{enumerate}
\item[(i)]
By analoguous reasoning to the above one can use non-decreasing sequences $\zeta_N \in \SC_{tem}^+$ such that $\zeta_N(x) \uparrow \infty$ for $x<0$ and $\zeta_N(x) = 0$ for $x \geq 0$ to prove that for $T>0$ arbitrarily fixed there exists a \textbf{left upper measure} $\upsilon_T \in \CP(\SC_{tem}^+)$ uniquely characterized by its Laplace functional
\eqn{
  \int e^{-2 \langle f,g \rangle} \upsilon_T(df) = \PP\!\left( \big\langle \1_{(-\infty,0)}(\cdot) , u_T^{(g)} \big\rangle = 0 \right), \quad \mbox{ for } g \in \SC_{tem}^+
}
and such that $\SL\!\left( u_T^{(\zeta_N)} \right) \Rightarrow \upsilon_T$ in $\CP(\SC_{tem}^+)$. Moreover, $\SL(u_t^{(\upsilon_T)}) = \upsilon_{T+t}$.
\item[(ii)]
For $u_0 \in \SC_{tem}^+$ with $R_0(u_0) \leq 0$ and $T>0$ arbitrarily fixed, using $\zeta_1 \geq u_0$ in the construction, one obtains analoguously the existence of a coupling such that 
\eqn{
  u_{T+t}^{(u_0)}(x) \leq u_{T+t}^{*,l}(x) \quad \mbox{ for all } x \in \RR, t \geq 0 \mbox{ almost surely,}
}
where $\SL((u_{T+t}^{*,l})_{t \geq 0})=\PP_{\upsilon_T}$ holds. Note in particular that such a coupling yields
\eqn{
\lbeq{equ:dom-left-upper}
  R_0(u_{T+t}^{(u_0)}) \leq R_0(u_{T+t}^{*,l}) \mbox{ for all } t \geq 0 \mbox{ almost surely.} 
}
One can further show that there exists a coupling such that for $T>0$ fixed, 
\eqn{
  u_{T+t}^{*,l}(x) \leq u_{T+t}^*(x), \quad \mbox{ for all } t \geq 0, x \in \RR \mbox{ almost surely}.
}
\item[(iii)]
The construction of a \textbf{right upper measure} $\kappa_T$ by means of $\xi_N(x) \equiv \zeta_N(-x)$ and corresponding properties follow analoguously. We will use the notation $\SL(u_t^{*,r}) \equiv \kappa_t$ in what follows. 
\end{enumerate}
\end{remark}
%
% ------------------------------------------------------------
%
\begin{corollary}
For $\phi \in \SC_{tem}^+$ and $T>0$ arbitrary,
\eqn{ 
\lbeq{equ:bound_expectation_u_star_t}
  \EE[ \langle u_T^* , \phi \rangle ] \leq \theta \frac{\langle \phi,1\rangle}{1-e^{-\theta T}},
}
\eqn{
\lbeq{equ:bound_expectation_u_star_t-l}
  \EE\!\left[ \langle u_T^{*,l} , \phi \rangle \right]
  = \lim_{\lambda \rightarrow 0^+} \frac{ \PP_{\lambda \phi}\!\left( u_T|_{x<0} \not\equiv 0 \right) }{2\lambda}
}
and
\eqn{ 
\lbeq{equ:bound_expectation_u_star_infty}
  \int \langle f , \phi \rangle \mu(df) \leq \theta \langle \phi,1\rangle.
}
\end{corollary}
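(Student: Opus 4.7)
The plan is to prove all three statements by the standard trick of differentiating the Laplace functional at $\lambda=0^+$, implemented rigorously via monotone convergence. For any $f\in\SC_{tem}^+$ and $\phi\in\SC_{tem}^+$, the elementary identity
\eqn{
  \frac{1-e^{-2\lambda \langle f,\phi\rangle}}{2\lambda}=\int_0^{\langle f,\phi\rangle}e^{-2\lambda y}\,dy
}
shows that this expression is monotonically increasing to $\langle f,\phi\rangle$ as $\lambda\downarrow 0^+$. Hence, integrating against any of the measures $\mu_T$, $\upsilon_T$, $\mu$ and applying monotone convergence converts the Laplace functional into the first moment.

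For the first bound, applying this to Proposition~\ref{PRO:tightness_T} at $g=\lambda\phi$ yields
\eqn{
  \EE[\langle u_T^*,\phi\rangle]=\lim_{\lambda\downarrow 0^+}\frac{1-\PP(\tau^{(\lambda\phi)}\leq T)}{2\lambda}=\lim_{\lambda\downarrow 0^+}\frac{\PP(\tau^{(\lambda\phi)}>T)}{2\lambda}.
}
I would then invoke the superprocess domination bound \eqref{equ:bound_die_out_t}, which gives
$\PP(\tau^{(\lambda\phi)}>T)\leq 1-\exp\!\bigl(-2\theta\lambda\langle\phi,1\rangle/(1-e^{-\theta T})\bigr)\leq 2\theta\lambda\langle\phi,1\rangle/(1-e^{-\theta T})$, and dividing by $2\lambda$ and passing to the limit produces the required estimate (the inequality is trivial if $\langle\phi,1\rangle=\infty$).

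For the second identity, I would apply exactly the same monotone-convergence manipulation to the Laplace characterization of $\upsilon_T$ in Remark~\ref{RMK:on_u_star_l}(i):
\eqn{
  \int e^{-2\lambda\langle f,\phi\rangle}\upsilon_T(df)=\PP\!\left(\big\langle\1_{(-\infty,0)},u_T^{(\lambda\phi)}\big\rangle=0\right)=1-\PP_{\lambda\phi}\!\left(u_T|_{x<0}\not\equiv 0\right).
}
Subtracting from $1$, dividing by $2\lambda$ and sending $\lambda\downarrow 0^+$ directly yields the claimed equality; no further estimate is needed here.

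For the third bound, the same procedure applied to \eqref{equ:upper-invariant-Laplace} gives, for $\phi\in\SC_c^+$, $\int\langle f,\phi\rangle\mu(df)=\lim_{\lambda\downarrow 0^+}\PP(\tau^{(\lambda\phi)}=\infty)/(2\lambda)$. Letting $T\to\infty$ in \eqref{equ:bound_die_out_t} yields $\PP(\tau^{(\lambda\phi)}<\infty)\geq\exp(-2\theta\lambda\langle\phi,1\rangle)$, and hence $\PP(\tau^{(\lambda\phi)}=\infty)\leq 2\theta\lambda\langle\phi,1\rangle$, proving the claim for compactly supported $\phi$. The only mild subtlety (the main obstacle, such as it is) is extending to arbitrary $\phi\in\SC_{tem}^+$, since \eqref{equ:upper-invariant-Laplace} is only stated for $g\in\SC_c^+$; I would do this by choosing $\phi_n\uparrow\phi$ with $\phi_n\in\SC_c^+$ and invoking monotone convergence on both sides (the inequality being trivial when $\langle\phi,1\rangle=\infty$).
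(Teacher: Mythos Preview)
Your proof is correct. For the first two statements your argument coincides with the paper's: both differentiate the relevant Laplace functional at $\lambda=0^+$ (you make the monotone convergence justification explicit via the identity $\frac{1-e^{-2\lambda a}}{2\lambda}=\int_0^a e^{-2\lambda y}\,dy$, which is a slight gain in rigor over the paper's bare derivative notation), and then invoke the superprocess domination \eqref{equ:bound_die_out_t} for the first bound and the Laplace characterization of $\upsilon_T$ for the second.

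For the third bound your route differs from the paper's. You work directly with the Laplace functional \eqref{equ:upper-invariant-Laplace} of $\mu$, differentiate at $\lambda=0^+$, and bound $\PP(\tau^{(\lambda\phi)}=\infty)$ via the $T\to\infty$ limit of \eqref{equ:bound_die_out_t}, then extend from $\SC_c^+$ to $\SC_{tem}^+$ by monotone convergence. The paper instead truncates the integrand by $\wedge M$, uses the weak convergence $\mu_T\Rightarrow\mu$ from Proposition~\ref{PRO:u_star} to transport the already-proven bound \eqref{equ:bound_expectation_u_star_t} from $\mu_T$ to $\mu$, and then removes the truncations $M\to\infty$, $\phi_n\uparrow\phi$ by monotone convergence. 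Your argument is more direct and avoids both the $\wedge M$ device and the appeal to weak convergence; the paper's version has the modest advantage of reusing the first bound rather than re-deriving the superprocess estimate in the limit.
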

%
% ------------------------------------------------------------
%
\begin{proof}
Use \eqref{equ:Laplace-u-T} and coupling with a superprocess (see Remark~\ref{RMK:monotonicity-domination_super}(ii)) and \eqref{equ:bound_die_out_t} to see that
\eqan{
  \EE\!\left[ \langle u_T^*,\phi \rangle \right] 
  &= (-1/2) \frac{d}{d\lambda} \Big|_{\lambda=0^+} \EE\!\left[ e^{-2\lambda \langle u_T^*,\phi \rangle} \right] 
  = (-1/2) \frac{d}{d\lambda} \Big|_{\lambda=0^+} \PP\!\left( \tau^{(\lambda \phi)} \leq T \right) \\
  &= (-1/2) \lim_{\lambda \downarrow 0^+} \frac{\PP\!\left( \tau^{(\lambda \phi)} \leq T \right) - 1}{\lambda} 
  \leq (-1/2) \lim_{\lambda \downarrow 0^+} \frac{\overline{\PP}\!\left( \bar{\tau}^{(\lambda \phi)} \leq T \right) - 1}{\lambda} \nn\\
  &= (-1/2) \lim_{\lambda \downarrow 0^+} \frac{e^{-2\theta \frac{\langle \lambda \phi,1 \rangle}{1-e^{-\theta T}}} - 1}{\lambda} 
  = (-1/2) \frac{d}{d\lambda} \Big|_{\lambda=0^+} e^{-2\theta\lambda \frac{\langle \phi,1 \rangle}{1-e^{-\theta T}}} 
  = \theta \frac{\langle \phi,1\rangle}{1-e^{-\theta T}}. \nn
}

The second claim follows by analoguous reasoning. For the third claim, let $M>0$ fixed, $\phi_n \in \SC_c^+, n \in \NN$ such that $\phi_n \uparrow \phi$. By Proposition~\ref{PRO:u_star} and \eqref{equ:bound_expectation_u_star_t},
\eqn{
\lbeq{equ:bound_expectation_u_star-calc}
  \int \left( \langle f , \phi_n \rangle \wedge M \right) \mu(df)
  = \lim_{T \rightarrow \infty} \int \left( \langle f , \phi_n \rangle \wedge M \right) \mu_T(df)
  \leq \lim_{T \rightarrow \infty} \theta \frac{\langle \phi_n,1\rangle}{1-e^{-\theta T}}
  = \theta \langle \phi_n , 1 \rangle.
}
Use monotone convergence first for $M \rightarrow \infty$, then for $\phi_n \uparrow \phi$ to establish the claim.
\end{proof}
%
%%%%%%%%%%%%%%%%%%%%%%%%%%%%%%%
%
\section{Blow up of the overall mass and support}
\label{SEC:mass_and_support}
%
% ============================================================
%
\begin{proposition} 
\label{PRO:overall_mass_and_support}
Let $\theta>\theta_c$ and $u_0 \in \SC_{tem}^+ \backslash \{0\}$. Then the overall mass process satisfies in the limit
\eqn{
  \PP_{u_0}\!\left( \lim_{t \rightarrow \infty} \langle u_t,1 \rangle = \infty \left. \right| \tau = \infty \right) = 1. 
}
Moreover, if $u_0 \in \SC_c^+$, 
\eqn{
  \PP_{u_0}\!\left( \lim_{t \rightarrow \infty} \left( R_0(t)-L_0(t) \right) = \infty \left. \right| \tau = \infty \right) = 1. 
}
\end{proposition}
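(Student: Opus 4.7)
The plan is to prove both claims by the same mechanism: whenever the quantity of interest (total mass for the first claim, width of support for the second) is bounded, the superprocess comparison estimates of Remark~\ref{RMK:monotonicity-domination_super}(ii) give a uniform positive extinction probability within one unit of time, and iterating this bound along a sequence of stopping times via the strong Markov property forces the quantity to blow up on $\{\tau=\infty\}$.

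For the mass claim, fix $M>0$ and set $\sigma_0 \equiv 0$ and
\[
  \sigma_{k+1} \equiv \inf\!\left\{ t \geq \sigma_k + 1 : 0 < \langle u_t,1 \rangle \leq M \right\}, \quad \inf\emptyset = \infty.
\]
Since $\langle u_t,1 \rangle$ equals the monotone limit $\uparrow\lim_{\epsilon \downarrow 0} \langle u_t, e^{-\epsilon|\cdot|}\rangle$ of continuous functions in $t$, it is lower semicontinuous, so each $\sigma_k$ is a stopping time in the right-continuous filtration $\SU_t$. On $\{\sigma_k<\infty\}$, $u_{\sigma_k} \in \SC_{tem}^+$ has $\langle u_{\sigma_k},1\rangle \leq M$, and combining the strong Markov property of Theorem~\ref{THM:tribe}b) with \eqref{equ:bound_die_out_t} at $t=1$ yields
\[
  \PP(\tau \leq \sigma_k + 1 \mid \SF_{\sigma_k}) = \PP_{u_{\sigma_k}}(\tau \leq 1) \geq \exp\!\left( -\tfrac{2\theta M}{1 - e^{-\theta}} \right) =: c(M) > 0.
\]
Because $\{\tau \leq \sigma_k + 1\} \subset \{\sigma_{k+1}=\infty\}$ on $\{\sigma_k<\infty\}$, this gives $\PP(\sigma_{k+1}<\infty) \leq (1-c(M))\,\PP(\sigma_k<\infty)$, hence $\PP(\sigma_k<\infty) \leq (1-c(M))^k \to 0$. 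On $\{\liminf_{t \to \infty} \langle u_t,1\rangle \leq M\} \cap \{\tau=\infty\}$ every $\sigma_k$ is finite, so this event has probability zero; letting $M$ range over $\NN$ and using that $\liminf$ of a nonnegative function equals $\infty$ iff the $\lim$ does, completes the first claim.

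The support claim is proved by the identical scheme. For $u_0 \in \SC_c^+$ the compact support property (Notation~\ref{NOTA:kliem}-5) keeps $u_t \in \SC_c^+$ for all $t$, so $L_0(t), R_0(t) \in \RR$ whenever $u_t \not\equiv 0$. Fix $K>0$, let $\tau_0 \equiv 0$, and define
\[
  \tau_{k+1} \equiv \inf\!\left\{ t \geq \tau_k + 1 : u_t \not\equiv 0,\; R_0(t) - L_0(t) \leq K \right\}.
\]
Since $f \mapsto R_0(f)$ and $f \mapsto -L_0(f)$ are lower semicontinuous on $\SC_c^+ \setminus \{0\}$ with respect to pointwise (hence $\SC_{tem}^+$) convergence---the set $\{f>0\}$ being open---$t \mapsto R_0(t) - L_0(t)$ is lsc on $[0,\tau)$ and $\tau_k$ is a stopping time. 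On $\{\tau_k<\infty\}$ the support of $u_{\tau_k}$ lies in some interval of length at most $K$, and translation invariance of \eqref{equ:SPDE} together with \eqref{equ:bound_die_out_interval_t} produces $\PP_{u_{\tau_k}}(\tau\leq 1) \geq C(\theta,1,0,K) > 0$. The same induction yields $\PP(\liminf_{t \to \infty}(R_0(t)-L_0(t)) \leq K,\;\tau=\infty)=0$ for every $K$, proving the second claim.

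The only delicate point is the measurability of the stopping times in the right-continuous filtration $\SU_t$, which is handled by the semicontinuity observations above together with the standard debut-of-a-closed-set argument; everything else is a routine strong Markov iteration using the extinction estimates \eqref{equ:bound_die_out_t} and \eqref{equ:bound_die_out_interval_t}.
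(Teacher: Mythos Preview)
Your proof is correct and follows essentially the same strategy as the paper: define stopping times, spaced by at least one time unit, at which the relevant quantity (total mass, respectively support width) drops below a fixed threshold, use the superprocess-domination extinction bounds \eqref{equ:bound_die_out_t} and \eqref{equ:bound_die_out_interval_t} together with the strong Markov property to obtain geometric decay of the probability that these stopping times are finite, and conclude. Your treatment is in fact a bit more careful than the paper's in that you justify the stopping-time property via lower semicontinuity; one small imprecision is that the bound $\langle u_{\sigma_k},1\rangle \le M$ on $\{\sigma_k<\infty\}$ holds only for $k\ge 1$ (not at $\sigma_0=0$), but this does not affect the conclusion.
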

%
% ------------------------------------------------------------
%
\begin{proof}
If the process has infinite mass at all times, the first claim is trivial. Thus, without loss of generality let $\langle u_0 , 1 \rangle < \infty$ in what follows (otherwise condition on the first time the process has finite mass). Consider $M \in \NN$ arbitrary such that $<u_0,1> < M$. Set $T_0 \equiv 0$ and
\eqn{
  T_m \equiv \inf\!\left\{ t \geq T_{m-1} + 1: \langle u_t,1 \rangle \leq M \right\}, \ m \in \NN, 
}
where we set $\inf \emptyset = \infty$. Next observe that 
\eqn{
  \PP_{u_0}\!\left( T_m < \tau \right) = \PP_{u_0}\!\left( T_m < \tau , T_m < \infty \right) 
  = \EE_{u_0}\!\left[ \EE_{u_0}\!\left[ \1\!\left( T_m < \tau , T_m < \infty \right) \left. \right| \SF_{T_{m-1}} \right] \prod_{i=1}^{m-1} \1\!\left( T_i < \tau , T_i < \infty \right) \right]. 
}
Using the strong Markov property of the process $u=u^{(u_0)}$ and the definition of $T_1=T_1^{(u_0)}$ we get as a further upper bound
\eqan{
  & \EE_{u_0}\!\left[ \PP_{u_{T_{m-1}}^{(u_0)}}\!\left( T_1^{\left( u_{T_{m-1}}^{(u_0)} \right)} < \tau^{\left( u_{T_{m-1}}^{(u_0)} \right)} , T_1^{\left( u_{T_{m-1}}^{(u_0)} \right)} < \infty \right) \prod_{i=1}^{m-1} \1\!\left( T_i^{(u_0)} < \tau^{(u_0)} , T_i^{(u_0)} < \infty \right) \right] \\
  & \leq \EE_{u_0}\!\left[ \PP_{u_{T_{m-1}}^{(u_0)}}\!\left(1 < \tau^{\left( u_{T_{m-1}}^{(u_0)} \right)} \right) \prod_{i=1}^{m-1} \1\!\left( T_i^{(u_0)} < \tau^{(u_0)} , T_i^{(u_0)} < \infty \right) \right]. \nn
}
Let $p_M \equiv \sup \limits_{u_0: \langle u_0,1 \rangle \leq M} \PP_{u_0}\!\left( 1 < \tau \right)$, then we obtain by iterating the above
\eqn{
  \PP_{u_0}\!\left( T_m < \tau , T_m < \infty \right) \leq p_M \EE_{u_0}\!\left[ \prod_{i=1}^{m-1} \1\!\left( T_i < \tau , T_i < \infty \right) \right] \leq p_M^m. 
}
To obtain an upper bound on $p_M$ we will dominate $u$ by a superprocess $\bar{u}$. Now \eqref{equ:bound_die_out_t} yields 
\eqn{
   \sup \limits_{u_0: \langle u_0,1 \rangle \leq M} \PP_{u_0}\!\left( 1 < \tau \right) \leq 1-\exp\!\left( \frac{ -2 \theta M }{ 1-e^{-\theta } } \right). 
}
Hence $p_M < 1$ and
\eqn{
  \PP_{u_0}\!\left( T_m < \tau \right) 
  = \PP_{u_0}\!\left( T_m < \tau , T_m < \infty \right) \leq p_M^m 
  \rightarrow 0 \mbox{ for } m \rightarrow \infty 
}
for all $u_0 \in \SC_{tem}^+$ satisfying $0< \langle u_0,1 \rangle < M$. By continuity of measures, this finally gives with $T_m=T_m^{(M)}$ as above,
\eqan{
  \PP_{u_0}\!\left( \liminf_{t \rightarrow \infty} \langle u_t , 1 \rangle < \infty , \tau = \infty \right) 
  & = \lim_{M \rightarrow \infty} \PP_{u_0}\!\left( \liminf_{t \rightarrow \infty} \langle u_t , 1 \rangle < M , \tau = \infty \right) \nn\\
  & \leq \lim_{M \rightarrow \infty} \PP_{u_0}\!\left( T_m^{(M)} < \tau \ \forall m \in \NN , \tau = \infty \right) \nn\\
  & \leq \lim_{M \rightarrow \infty} \lim_{m \rightarrow \infty} \PP_{u_0}\!\left( T_m^{(M)} < \tau \right) \nn\\
  & \leq \lim_{M \rightarrow \infty} \lim_{m \rightarrow \infty} p_M^m \nn\\
  & = 0, \nn
}
which proves the first part of the claim. The second part can be shown by similar reasoning, using \eqref{equ:bound_die_out_interval_t} in place of \eqref{equ:bound_die_out_t}.
\end{proof}
%
%%%%%%%%%%%%%%%%%%%%%%%%%%%%%%%
%
\section{Estimates on the right wavefront marker}
\label{SEC:estimates_wfm}
%
%%%%%%%%%%%%%%%%%%%%%%%%%%%%%%%
%
One of the first results is the almost sure finiteness of the positive part of the right wavefront marker of $u^{*,l}_T$ (recall Remark~\ref{RMK:on_u_star_l}(ii)) for times $T>0$. Indeed, the expectation turns out to be linearly bounded in time $T$. 

The strategy to the proof of such a result is as follows. For fixed $T>0$, consider $\big( u_{T/2+t}^{*,l} \big)_{t \geq 0}$. For $t=0$, $u_{T/2}^{*,l}$ might or might not satisfy $R_0(T/2)=R_0(u_{T/2}^{*,l}) < \infty$, but one shows that with high probability, to the right of some large enough $R>0$ it only has finite mass. For the remaining time $T/2$, kill off enough parts to the right of $R$ to regain $R_0(T) < \infty$ for this part of the solution. For the part to the left of $R$, use the compact support property. Note that this strategy works for \eqref{equ:SPDE} as local patches of high mass immediately get ``beaten down`` due to the additional drift of $-u^2$. Therefore the speed of the right front of the support of a solution over a fixed time-interval is determined rather by the shape of the right wavefront than by the overall shape of the solution. 
%
% ------------------------------------------------------------
%
\begin{remark}[Notation involving non-continuous initial conditions]
\label{RMK:non-cont}
In the following three lemmas and the subsequent proof of Proposition~\ref{PRO:bound_on_u_star_l_t} initial conditions to \eqref{equ:SPDE} appear that involve indicator functions and thus are not continuous. This notation should be understood as explained below and is only used as an abbreviation to facilitate following the main idea of the proofs.

For $\nu \in \CP(\SC_{tem}^+), t \geq 0, A \subset \RR, F: \SC_{tem}^+ \rightarrow \RR^+$ measurable, denote
\eqn{
  \int \EE_{f \1_A(\cdot)}\!\left[ F(u_t) \right] \nu(df) \equiv \inf_{\phi \in \SC_{tem}^+, \phi \geq \1_A} \int \EE_{f \phi}\!\left[ F(u_t) \right] \nu(df).
}
The proofs should then be executed for appropriate $\phi$ fixed in place of $\1_A(\cdot)$. Only in the conclusion of the proofs the infimum over $\phi \in \SC_{tem}^+, \phi \geq \1_A$ is taken to conclude the claims.
\end{remark} 
%
% ------------------------------------------------------------
%
\begin{lemma}
\label{LEM:bd-part-of-exp}
For $\theta > 0$ and $0<T \leq 1$,
\eqn{
  \EE\!\left[ \EE_{u_{T/2}^{*,l} \1_{(-\infty,T^{1/4}]}(\cdot)}\!\left[ 0 \vee R_0(T/2) \right] \right] 
  \leq C(\theta) T^{1/4}.
}
\end{lemma}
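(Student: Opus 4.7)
Set $v_0 := u^{*,l}_{T/2}\1_{(-\infty,T^{1/4}]}(\cdot)$, a random initial condition with support in $(-\infty, T^{1/4}]$ (understood as in Remark~\ref{RMK:non-cont}). Writing the inner expectation as a tail integral and splitting at $r=T^{1/4}$ gives
\begin{equation*}
\EE_{v_0}\!\left[0 \vee R_0(T/2)\right] \leq T^{1/4} + \int_{T^{1/4}}^\infty \PP_{v_0}(R_0(T/2) > r)\,dr,
\end{equation*}
so after taking the outer expectation the task reduces to showing that $\EE \int_{T^{1/4}}^\infty \PP_{v_0}(R_0(T/2) > r)\,dr \leq C(\theta)\, T^{1/4}$.

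For the tail integral, dominate $u^{(v_0)}$ by a super-Brownian motion $\bar u^{(v_0)}$ with mass creation $\theta$ (Remark~\ref{RMK:monotonicity-domination_super}(ii)). Continuity of the solution yields $\{R_0(u_{T/2})>r\}\subseteq \{\bar u^{(v_0)}_{T/2}((r,\infty))>0\}$; the Laplace functional of the superprocess together with $1-e^{-x}\leq x$ then gives the estimate $\PP_{v_0}(R_0(T/2) > r) \leq \langle v_0, V^r_{T/2}\rangle$, where $V^r_t$ is the minimal nonnegative solution of the log-Laplace PDE $\partial_t V = \tfrac12 V_{xx} + \theta V - \tfrac12 V^2$ with singular initial data $\infty\cdot\1_{(r,\infty)}$. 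A branching Brownian motion comparison --- the expected number of descendants reaching past $r$ from $x<r$ by time $T/2$ is $e^{\theta T/2}\,\PP(B_{T/2}\geq r-x) \leq e^{\theta T/2 - (r-x)^2/T}$ --- produces the Gaussian tail $V^r_{T/2}(x) \leq C(\theta)\, e^{-(r-x)^2/T}$ on the relevant region $x \leq T^{1/4} < r$, $T \leq 1$.

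Taking outer expectation, applying Fubini, and splitting the $x$-integral at $0$: on $x\leq 0$ use the uniform bound $\EE[u^{*,l}_{T/2}(x)] \leq \EE[u^*_{T/2}(x)] \leq C(\theta)/T$, coming from translation invariance of $\mu_{T/2}$ combined with \eqref{equ:bound_expectation_u_star_t}. After the substitution $u=r-x\geq T^{1/4}$ the double Gaussian integral contributes $\leq C(\theta)\,e^{-T^{-1/2}}$, which is $\ll T^{1/4}$ for $T\leq 1$. On $x\in[0, T^{1/4}]$ one needs the sharper pointwise bound $\EE[u^{*,l}_{T/2}(x)] \leq C(\theta)\, e^{-x^2/T}$, obtained by differentiating the Laplace characterization \eqref{equ:bound_expectation_u_star_t-l} at $\lambda=0^+$, dominating by super-BM, and reapplying the BBM expected-descendant comparison for a single atom. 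Completing the square $x^2+(r-x)^2=2(x-r/2)^2+r^2/2$ and carrying out the Gaussian integral in $x$, then in $r\geq T^{1/4}$, yields $\leq C(\theta)\,T\,e^{-T^{-1/2}/2}$, again $\ll T^{1/4}$ for $T\leq 1$. Summing the two pieces gives the desired estimate.

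\textbf{Main obstacle.} The critical technical step is the sharp pointwise density bound $\EE[u^{*,l}_{T/2}(x)] \leq C(\theta)\,e^{-x^2/T}$ for $x\geq 0$: the integrated bounds of Section~\ref{SEC:self_duality_and_uim} supply only $C(\theta)/T$ uniformly in $x$, which is insufficient near the truncation boundary and would leave the final estimate off by a factor of $T^{1/4}$. The extra exponential decay in $x$ is produced by returning to the log-Laplace / BBM description of super-Brownian motion started from a single atom, i.e., by stepping slightly outside the toolbox developed so far in the paper.
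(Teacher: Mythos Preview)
Your tail-integral setup and the split at $r=T^{1/4}$ are fine, but the core estimate $V^r_{T/2}(x)\le C(\theta)\,e^{-(r-x)^2/T}$ is not justified by the BBM first-moment heuristic and in fact fails near $x=r$. The expected number of BBM descendants past $r$ bounds the log-Laplace solution with \emph{bounded} initial data $\1_{(r,\infty)}$, not the one with $\infty\cdot\1_{(r,\infty)}$ that you need; for the latter, the stationary barrier $h(x)=2\theta+12(r-x)^{-2}$ shows $V^r_t(x)$ blows up like $(r-x)^{-2}$ as $x\uparrow r$. The Iscoe-type argument behind Lemma~\ref{LEM:tribe_lemma_2_1} only yields
\[
V^r_{T/2}(x)\ \le\ C\bigl(\theta+(r-q)^{-2}\bigr)\,e^{\theta T/2}\,e^{-(q-x)^2/(2T)}\qquad (x<q<r),
\]
and with the natural $x$-dependent choice $q=(x+r)/2$ your double integral picks up a factor $(r-x)^{-2}$ which produces a logarithmic divergence at $x\to T^{1/4},\ r\to T^{1/4}$. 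So the computation as written does not close, and the ``main obstacle'' you flag (the pointwise density bound $\EE[u^{*,l}_{T/2}(x)]\le C(\theta)e^{-x^2/T}$) does not rescue it.

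The paper avoids this entirely by a different placement of the intermediate point. After shifting the truncated initial condition so that it is supported in $(-\infty,0]$, it applies Lemma~\ref{LEM:tribe_lemma_2_1} with $q=R/2$ \emph{independent of $x$}. The prefactor is then $\theta+4/R^2\le C(\theta)(1+T^{-1/2})$ uniformly on $R\ge T^{1/4}$, and the Gaussian weight becomes $\exp\bigl(-(R/2+T^{1/4}-x)^2/(2T)\bigr)$, which is already small on the whole support $x\le T^{1/4}$ (at the boundary it equals $e^{-R^2/(8T)}$). Taking the outer expectation now only needs the crude integrated bound $\EE[\langle u^{*,l}_{T/2},\Phi\rangle]\le C(\theta)T^{-1}\langle\Phi,1\rangle$ from \eqref{equ:bound_expectation_u_star_t-l} and superprocess domination; no pointwise density estimate is required. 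In short, the right fix is to push the Gaussian center to $q=R/2$ rather than to sharpen the density bound---this is precisely the step that makes the paper's proof go through in a few lines.
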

%
% ------------------------------------------------------------
%
\begin{proof}
Recall the notation of Remark~\ref{RMK:on_u_star_l}. We first bound
%(see e.g. \cite{bD2005}, Exercise A.6.3)
%
\eqan{
  & \EE\!\left[ \EE_{u_{T/2}^{*,l} \1_{(-\infty,T^{1/4}]}(\cdot)}\!\left[ 0 \vee R_0(T/2) \right] \right] 
  \leq T^{1/4} + \int \EE_{f(\cdot + T^{1/4}) \1_{(-\infty,0]}(\cdot)}\!\left[ 0 \vee R_0(T/2) \right] \upsilon_{T/2}(df) \\
  &\leq 2 T^{1/4} + \int \int_{T^{1/4}}^\infty \PP_{f(\cdot+T^{1/4}) \1_{(-\infty,0]}(\cdot)}\!\left( \sup_{0 \leq s \leq T/2} R_0(s) > R \right) dR \ \upsilon_{T/2}(df). \nn
}
Lemma~\ref{LEM:tribe_lemma_2_1} from the Appendix yields for $R>q>0$ arbitrary,
\eqn{
  \PP_{f(\cdot+T^{1/4}) \1_{(-\infty,0]}(\cdot)}\!\left( \sup_{0 \leq s \leq T/2} R_0(s) > R \right) 
  \leq C \left( \theta + (R-q)^{-2} \right) e^{\theta T/2} \int_{-\infty}^{T^{1/4}} \exp\!\left( -\frac{(q-x+T^{1/4})^2}{2T} \right) f(x) dx. 
}
Abbreviate
\eqn{
  \Phi^{[R,q,T]}(x) \equiv \1_{(-\infty,T^{1/4} ]}(x) \exp\!\left( - \frac{(q-x+T^{1/4})^2}{2T} \right),
}
then by \eqref{equ:bound_expectation_u_star_t-l}, 
\eqn{
  \EE\!\left[ \EE_{u_{T/2}^{*,l} \1_{(-\infty,T^{1/4}]}(\cdot)}\!\left[ 0 \vee R_0(T/2) \right] \right] 
  \leq 2 T^{1/4} + C \int_{T^{1/4}}^\infty \left( \theta + (R-q)^{-2} \right) e^{\theta T/2} \lim_{\lambda \rightarrow 0^+} \frac{ \PP_{\lambda \Phi^{[R,q,T]}}\!\left( u_{T/2}|_{x<0} \not\equiv 0 \right) }{2\lambda} dR.
}
By the crude bounds $\PP_{\Phi^{[R,q,T]}}\!\left( u_{T/2}|_{x<0} \not\equiv 0 \right) \leq \PP_{\Phi^{[R,q,T]}}\!\left( \tau>T/2 \right)$, $1-e^{-x} \leq x$ for $x \geq 0$ and \eqref{equ:bound_die_out_t},
\eqn{
  \EE\!\left[ \EE_{u_{T/2}^{*,l} \1_{(-\infty,T^{1/4}]}(\cdot)}\!\left[ 0 \vee R_0(T/2) \right] \right] 
  \leq 2 T^{1/4} + C \int_{T^{1/4}}^\infty \left( \theta + (R-q)^{-2} \right) e^{\theta T/2} \theta \frac{\langle \Phi^{[R,q,T]} , 1 \rangle}{1-e^{-\theta T/2}} dR.
}
Choose $q=R/2$ and recall that $0<T\leq 1$ to get, using $1-\exp(-x) \geq C(c) x$ for $0 \leq x \leq c$,
\eqan{
  \EE\!\left[ \EE_{u_{T/2}^{*,l} \1_{(-\infty,T^{1/4}]}(\cdot)}\!\left[ 0 \vee R_0(T/2) \right] \right] 
  &\leq 2 T^{1/4} + C(\theta) (1+T^{-1/2}) T^{-1} \int_{T^{1/4}}^\infty \int_{-\infty}^{T^{1/4}}  e^{-\frac{(R/2-x+T^{1/4})^2}{2T}} dx dR \\
  &\leq 2 T^{1/4} + C(\theta) (1+T^{-1/2}) T^{-1} T^{1/2} \int_{T^{1/4}}^\infty e^{-\frac{(R/2)^2}{4T}} dR \nn\\
  &\leq 2 T^{1/4} + C(\theta) (1+T^{-1/2}) e^{-\frac{1}{32 T^{1/2}}}, \nn   
}
which concludes the proof.
\end{proof}
%
% ------------------------------------------------------------
%
\begin{lemma}
\label{LEM:bd-small-t-small-mass-survives}
For $\theta > 0$, $0<T \leq 1, R>0$ and $n \in \NN$ arbitrary,
\eqn{
  \EE\!\left[ \PP_{u_{T/2}^{*,l} \1_{(nR,(n+1)R]}(\cdot)}( \tau>T/2 ) \right]
  \leq C(\theta) T^{-1/2} \left( \theta + (R/2)^{-2} \right) e^{-\frac{((n-1/2)R)^2}{4T}}.
}
\end{lemma}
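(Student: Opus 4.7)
The plan follows the template established in Lemma~\ref{LEM:bd-part-of-exp}: use superprocess domination to convert $\PP(\tau > T/2)$ into a linear mass functional, take expectation to reduce to a first moment of $u_{T/2}^{*,l}$ as characterized in \eqref{equ:bound_expectation_u_star_t-l}, and bound the resulting probability via the $x \mapsto -x$ reflected form of Lemma~\ref{LEM:tribe_lemma_2_1} applied to initial data concentrated in $[nR, (n+1)R]$ with buffer $R/2$ to the left of the origin. Throughout, $\1_{(nR, (n+1)R]}$ is replaced by a continuous majorant $\phi \in \SC_{tem}^+$ per Remark~\ref{RMK:non-cont}, with the infimum over $\phi$ taken at the very end.

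Concretely, I first apply Remark~\ref{RMK:monotonicity-domination_super}(ii) together with \eqref{equ:bound_die_out_t} and $1 - e^{-x} \leq x$ to deduce, for any $h \in \SC_{tem}^+$ and $0 < T \leq 1$,
\begin{equation*}
\PP_h(\tau > T/2) \;\leq\; \frac{2\theta \langle h , 1 \rangle}{1 - e^{-\theta T/2}} \;\leq\; C(\theta)\,T^{-1}\,\langle h , 1 \rangle,
\end{equation*}
using $1 - e^{-\theta T/2} \geq C(\theta)\,T$ on this range. Applying this with $h = u_{T/2}^{*,l} \phi$, taking expectation, and invoking \eqref{equ:bound_expectation_u_star_t-l} yields
\begin{equation*}
\EE\!\left[ \PP_{u_{T/2}^{*,l} \phi}(\tau > T/2) \right] \;\leq\; \frac{C(\theta)}{T}\; \EE\!\left[ \langle u_{T/2}^{*,l}, \phi \rangle \right] \;=\; \frac{C(\theta)}{T} \lim_{\lambda \downarrow 0} \frac{\PP_{\lambda \phi}\!\left( u_{T/2}|_{x<0} \not\equiv 0 \right)}{2\lambda}.
\end{equation*}
I then dominate $\PP_{\lambda \phi}(u_{T/2}|_{x<0} \not\equiv 0) \leq \PP_{\lambda \phi}(\inf_{0 \leq s \leq T/2} L_0(s) < 0)$ and apply the reflected form of Lemma~\ref{LEM:tribe_lemma_2_1} with buffer parameter $q = R/2$ (valid because $0 < q < nR$ for $n \geq 1$; the edge case $n = 0$ is dispatched separately, combining the trivial bound $\PP(\cdot) \leq 1$ with a direct Gaussian estimate for large $R$). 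Letting $\phi \downarrow \1_{(nR, (n+1)R]}$ produces
\begin{equation*}
\PP_{\lambda \phi}\!\left( \inf_{s \leq T/2} L_0(s) < 0 \right) \;\leq\; C\bigl(\theta + (R/2)^{-2}\bigr)\,e^{\theta T/2}\,\lambda \int_{nR}^{(n+1)R} e^{-(x - R/2)^2/2T}\,dx.
\end{equation*}

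The Gaussian integral is handled by splitting $e^{-(x - R/2)^2/2T} = e^{-(x - R/2)^2/4T}\cdot e^{-(x - R/2)^2/4T}$: on $[nR, (n+1)R]$ the first factor is bounded by $e^{-((n - 1/2) R)^2/4T}$ since $(x - R/2)^2 \geq ((n - 1/2)R)^2$ there, while $\int_\RR e^{-(x - R/2)^2/4T}\,dx = 2\sqrt{\pi T}$, so the integral is $\leq C\,\sqrt{T}\,e^{-((n - 1/2)R)^2/4T}$. Dividing by $2\lambda$, substituting back, and taking the infimum over $\phi \geq \1_{(nR,(n+1)R]}$ in $\SC_{tem}^+$ concludes the proof. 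The main obstacle is the interaction between the approximation of the discontinuous indicator (Remark~\ref{RMK:non-cont}) and the $\lambda \downarrow 0$ limit in \eqref{equ:bound_expectation_u_star_t-l}; deferring the infimum over $\phi$ to the very last step circumvents this, while the $n = 0$ regime must be treated independently because no admissible buffer $q$ exists when $L_0(\phi) = 0$.
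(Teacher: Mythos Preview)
Your proof is correct and follows essentially the same route as the paper's: superprocess domination reduces to a first-moment bound, \eqref{equ:bound_expectation_u_star_t-l} converts this to a hitting probability, and Lemma~\ref{LEM:tribe_lemma_2_1} supplies the Gaussian tail. The only cosmetic difference is that the paper uses translation--reflection invariance to rewrite the hitting probability as $\PP_{\lambda \1_{(-\infty,0)}}(u_{T/2}|_{(nR,\infty)} \not\equiv 0)$ and applies Lemma~\ref{LEM:tribe_lemma_2_1} in its direct form with buffer $q=(n-1/2)R$, whereas you keep the initial data on the right and apply the reflected form with buffer $q=R/2$; the two choices are dual and produce the identical estimate (and your $n=0$ digression is unnecessary, since $n\in\NN$ begins at $1$ here).
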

%
% ------------------------------------------------------------
%
\begin{proof}
Domination by a superprocess and \eqref{equ:bound_die_out_t} yield
\eqn{
  \EE\!\left[ \PP_{u_{T/2}^{*,l} \1_{(nR,(n+1)R]}(\cdot)}( \tau>T/2 ) \right]
  \leq C(\theta) T^{-1} \EE\!\left[ \big\langle u_{T/2}^{*,l} \1_{(nR,(n+1)R]}(\cdot) , 1 \big\rangle \right]. 
}
To further bound the right hand side, use \eqref{equ:bound_expectation_u_star_t-l} and Lemma~\ref{LEM:tribe_lemma_2_1} to obtain 
\eqan{
\lbeq{equ:bound-u-star-far-mass}
  \EE\!\left[ \big\langle u_{T/2}^{*,l} , \1_{(nR,\infty)}(\cdot) \big\rangle \right]
  &= \lim_{\lambda \rightarrow 0^+} \frac{ \PP_{\lambda \1_{(-\infty,0)}(\cdot)}\!\left( u_{T/2}|_{(nR,\infty)} \not\equiv 0 \right) }{2\lambda} \\
  &\leq C \left( \theta + (nR-q)^{-2} \right) e^{\theta T/2} \int_{-\infty}^0 \exp\!\left( - \frac{(q-x)^2}{2T} \right) dx \nn\\
  &\leq C(\theta) \left( \theta + (nR-q)^{-2} \right) T^{1/2} e^{-\frac{q^2}{4T}} \nn
}
with $q=q(n,R), nR>q>0$ arbitrarily fixed. The choice $q=(n-1/2)R$ gives the claim.
\end{proof}
%
% ------------------------------------------------------------
%
\begin{lemma}
\label{LEM:bd-small-t-small-mass-moves}
For $\theta > 0$, $0<T \leq 1, R>0$ and $n \in \NN$ arbitrary,
\eqn{
\lbeq{equ:bd-small-t-small-mass-moves}
  \EE\!\left[ \EE_{u_{T/2}^{*,l} \1_{(nR,(n+1)R]}(\cdot)}\!\left[ (0 \vee R_0(T/2))^2 \right] \right] 
  \leq 4((n+1)R)^2 + C(\theta) T^{-1/2} e^{-\frac{((n+1)R)^2}{32 T}}. 
}
\end{lemma}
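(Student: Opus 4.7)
The plan is to adapt the strategy of Lemma~\ref{LEM:bd-part-of-exp} to the second moment via the layer-cake identity
\begin{equation*}
\EE\!\left[\EE_{u_{T/2}^{*,l}\1_{(nR,(n+1)R]}(\cdot)}\!\left[(0 \vee R_0(T/2))^2\right]\right] = \int_0^\infty 2r\, \EE\!\left[\PP_{u_{T/2}^{*,l}\1_{(nR,(n+1)R]}(\cdot)}\!\left(R_0(T/2) > r\right)\right] dr,
\end{equation*}
split at $r = A := 2(n+1)R$. On the short range $r \in [0,A]$ I bound the probability crudely by $1$, which already yields the first term $A^2 = 4((n+1)R)^2$ of \eqref{equ:bd-small-t-small-mass-moves}.

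For the tail $r > A$, I would apply Lemma~\ref{LEM:tribe_lemma_2_1} with the $r$-dependent choice $q = (r+(n+1)R)/2 > (n+1)R$, giving
\begin{equation*}
\PP_f\!\left(\textstyle\sup_{0 \leq s \leq T/2} R_0(s) > r\right) \leq C\!\left(\theta + 4(r-(n+1)R)^{-2}\right) e^{\theta T/2} \int_{nR}^{(n+1)R} f(x) \exp\!\left(-\frac{(q-x)^2}{2T}\right) dx
\end{equation*}
for $f$ supported in $(nR,(n+1)R]$, and then take expectation against $u_{T/2}^{*,l}$ exactly as in Lemma~\ref{LEM:bd-small-t-small-mass-survives}: combining \eqref{equ:bound_expectation_u_star_t-l} with domination by a superprocess \eqref{equ:bound_die_out_t} yields $\EE[\langle u_{T/2}^{*,l}, h\rangle] \leq C(\theta) T^{-1} \langle h, 1\rangle$ for $0 < T \leq 1$.

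The decisive point is the decomposition of the Gaussian exponent: for $x \in (nR,(n+1)R]$ and $r \geq A$ one has $q-x \geq (r-(n+1)R)/2 \geq (n+1)R/2$, and because $(r-(n+1)R)^2 \geq ((n+1)R)^2$ on $r \geq A$ one can split $(q-x)^2 \geq \tfrac{1}{8}(r-(n+1)R)^2 + \tfrac{1}{8}((n+1)R)^2$. This factors the Gaussian weight into an $r$-dependent piece $\exp(-(r-(n+1)R)^2/(16T))$, integrable against $2r\,dr$ on $[A,\infty)$, and a constant piece $\exp(-((n+1)R)^2/(16T))$ that carries the $(n+1)R$-decay. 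Standard Gaussian integration in $r$, combined with the polynomial-Gaussian trade $z^k e^{-cz^2} \leq C(c,k) e^{-cz^2/2}$ and the inequality $T^{1/2} \leq T^{-1/2}$ valid for $T \leq 1$, then converts the surviving polynomial factors into the desired bound $C(\theta) T^{-1/2} \exp(-((n+1)R)^2/(32T))$ (the $/32$ leaves room for the polynomial-Gaussian trade).

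The main obstacle is precisely this last bookkeeping step: a single $q$ of the form $q = r/2$ loses the $(n+1)R$-decay after the $r$-integration, collapsing the Gaussian shift into a pure $T^{1/2}$ factor without any exponential tail in $(n+1)R$. The split of the exponent above is what permits one to separate the $r$-integration from the $(n+1)R$-decay and obtain simultaneously both the $T^{-1/2}$ prefactor and the claimed exponential tail.
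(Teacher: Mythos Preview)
Your proposal is correct and follows essentially the same route as the paper. Both arguments use the layer--cake representation split at $2(n+1)R$, apply Lemma~\ref{LEM:tribe_lemma_2_1} with the midpoint choice $q=(r+(n+1)R)/2$ (in the paper's variables this is $\tilde q=(n+1)R+\bar R/2$ after the substitution $\bar R=\tilde R-(n+1)R$), bound the expectation $\EE[\langle u^{*,l}_{T/2},h\rangle]$ by $C(\theta)T^{-1}\langle h,1\rangle$, and then split the Gaussian exponent to separate an $r$-integrable factor from the $e^{-((n+1)R)^2/(32T)}$ decay. The only cosmetic difference is that the paper obtains the expectation bound by dominating $u^{*,l}$ by $u^*$ and invoking \eqref{equ:bound_expectation_u_star_t}, whereas you reach the same estimate via \eqref{equ:bound_expectation_u_star_t-l} together with the superprocess extinction bound \eqref{equ:bound_die_out_t}; and the paper performs the Gaussian splitting in two stages (first $e^{-\bar R^2/(16T)}$, then extracting $e^{-((n+1)R)^2/(32T)}$) rather than in one step as you do.
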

%
% ------------------------------------------------------------
%
\begin{proof}

To get a first upper bound, apply Lemma~\ref{LEM:tribe_lemma_2_1} to $f \in \SC_{tem}^+$ with $R_0(f) \leq (n+1)R$, to obtain
%(see e.g. \cite{bD2005}, Exercise A.6.3)
%
\eqan{
  & \EE_f\!\left[ \sup_{0 \leq s \leq T/2} (0 \vee R_0(s))^2 \right]
  \leq ((n+1)R+Q)^2 + \int_{(n+1)R+Q}^\infty 2\tilde{R} \PP_f\!\left( R_0(T/2) > \tilde{R} \right) d\tilde{R} \\
  &\leq ((n+1)R+Q)^2 + C(\theta) \int_{(n+1)R+Q}^\infty \tilde{R} \left( \theta + (\tilde{R}-\tilde{q})^{-2} \right) \int \exp\!\left( - \frac{(\tilde{q}-x)^2}{2T} \right) f(x) dx d\tilde{R} \nn
}
with $Q>0$ and $\tilde{q}=\tilde{q}(\tilde{R}), \tilde{R}>\tilde{q}>(n+1)R$ arbitrary. Integration against $\upsilon_{T/2}(df)$ yields as an upper bound to the right hand side in \eqref{equ:bd-small-t-small-mass-moves},
\eqn{
  ((n+1)R+Q)^2 + C(\theta) \int_{(n+1)R+Q}^\infty \tilde{R} \left( \theta + (\tilde{R}-\tilde{q})^{-2} \right) \EE\!\left[ \big\langle \Psi^{[n,R,\tilde{q},T]} , u_{T/2}^{*,l} \big\rangle \right] d\tilde{R}
}
with 
\eqn{
  \Psi^{[n,R,\tilde{q},T]}(x) \equiv \1_{(nR,(n+1)R]}(x) \exp\!\left( - \frac{(\tilde{q}-x)^2}{2T} \right).
}
Dominate $u^{*,l}$ by $u^*$ and then apply \eqref{equ:bound_expectation_u_star_t} to get as a further upper bound
\eqn{
  ((n+1)R+Q)^2 + C(\theta) T^{-1} \int_{(n+1)R+Q}^\infty \tilde{R} \left( \theta + (\tilde{R}-\tilde{q})^{-2} \right) \langle \Psi^{[n,R,\tilde{q},T]} , 1 \rangle d\tilde{R}.
}
With the substitutions $\bar{R}+(n+1)R=\tilde{R}$, $\bar{q}+(n+1)R=\tilde{q}$ and $\bar{x}=(n+1)R+x$ this reads
\eqn{
  ((n+1)R+Q)^2 + C(\theta) T^{-1} \int_Q^\infty (\bar{R}+(n+1)R) \left( \theta + (\bar{R}-\bar{q})^{-2} \right) \big\langle \Psi^{[-1,R,\bar{q},T]} , 1 \big\rangle d\bar{R}
}
for $\bar{R}>\bar{q}>0$. Choose $Q=(n+1)R$ and $\bar{q}=\bar{R}/2$ to conclude that this in turn can be bounded from above by (recall that we assume $0<T \leq 1$)
\eqan{
  & 4((n+1)R)^2 + C(\theta) T^{-1} \int_{(n+1)R}^\infty \bar{R} \left( \theta + (\bar{R}/2)^{-2} \right) e^{-\frac{\bar{R}^2}{16 T}} \int_{-R}^0 e^{-\frac{(\bar{R}/2-\bar{x})^2}{4T}} d\bar{x} d\bar{R} \\
  & \leq 4((n+1)R)^2 + C(\theta) T^{-1/2} e^{-\frac{((n+1)R)^2}{32 T}} \int_{(n+1)R}^\infty \frac{\bar{R}}{\sqrt{T}} \left( \theta + \left( \frac{\bar{R}}{2\sqrt{T}} \right)^{-2} \right) e^{-\frac{\bar{R}^2}{32 T}} \frac{1}{\sqrt{T}} d\bar{R} \nn\\
  & \leq 4((n+1)R)^2 + C(\theta) T^{-1/2} e^{-\frac{((n+1)R)^2}{32 T}}, \nn
}
which completes the claim.
\end{proof}
%
% ----------------------------------------------------------------------
%
\begin{proposition} 
\label{PRO:bound_on_u_star_l_t}
For $\theta>0$ and $0<T \leq 1$,
\eqn{
  \EE\!\left[ 0 \vee R_0\!\left( u_T^{*,l} \right) \right] \leq C(\theta) T^{1/4}.
}
\end{proposition}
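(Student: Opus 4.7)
The strategy is to apply the Markov property at $T/2$ and decompose $u_{T/2}^{*,l}$ into a bulk piece supported on $(-\infty,T^{1/4}]$ plus tail pieces on $A_n=(nT^{1/4},(n+1)T^{1/4}]$ for $n\geq 1$, then bound the resulting contributions to $0\vee R_0$ at time $T$ using Lemmas~\ref{LEM:bd-part-of-exp}--\ref{LEM:bd-small-t-small-mass-moves}. By Remark~\ref{RMK:upper_inv_time_change} (applied to $\upsilon$) the process $(u_{T/2+t}^{*,l})_{t\geq 0}$ evolves from $u_{T/2}^{*,l}$ as an SPDE solution, and conditional on $u_{T/2}^{*,l}$ the coupling from Remark~\ref{RMK:monotonicity-domination_super}(i) (interpreted in the indicator-cutoff sense of Remark~\ref{RMK:non-cont}) produces independently driven solutions with initial data $u_{T/2}^{*,l}\1_{A_n}(\cdot)$ whose pointwise sum dominates $u_T^{*,l}$. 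Since $R_0$ of a non-negative sum of continuous functions equals the maximum of the individual $R_0$'s, this gives
\eqn{
  \EE\!\left[0\vee R_0(u_T^{*,l})\right]\leq\sum_{n\geq 0}\EE\!\left[\EE_{u_{T/2}^{*,l}\1_{A_n}(\cdot)}\!\left[0\vee R_0(T/2)\right]\right].
}

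The $n=0$ summand is bounded by $C(\theta)T^{1/4}$ directly via Lemma~\ref{LEM:bd-part-of-exp}. For $n\geq 1$, noting that $0\vee R_0(T/2)>0$ forces survival past $T/2$, Cauchy--Schwarz applied once in the inner and once in the outer expectation yields
\eqn{
  \EE\!\left[\EE_{u_{T/2}^{*,l}\1_{A_n}(\cdot)}\!\left[0\vee R_0(T/2)\right]\right]\leq\sqrt{\EE\!\left[\PP_{u_{T/2}^{*,l}\1_{A_n}(\cdot)}(\tau>T/2)\right]}\sqrt{\EE\!\left[\EE_{u_{T/2}^{*,l}\1_{A_n}(\cdot)}\!\left[(0\vee R_0(T/2))^2\right]\right]}.
}
Substituting $R=T^{1/4}$ in Lemmas~\ref{LEM:bd-small-t-small-mass-survives} and~\ref{LEM:bd-small-t-small-mass-moves} (and absorbing factors of $T^{-1/2}$ into $C(\theta)$, which is justified because $T\leq 1$) gives an $n$-th summand of order $C(\theta)T^{-1/4}(n+1)\exp(-(n-1/2)^2/(8T^{1/2}))$ plus an exponentially smaller contribution coming from the second term of Lemma~\ref{LEM:bd-small-t-small-mass-moves}.

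What remains, and is the main obstacle, is the verification that
\eqn{
  \sum_{n\geq 1}(n+1)\exp\!\left(-(n-1/2)^2/(8T^{1/2})\right)\leq C\,T^{1/2}\quad\text{uniformly in }T\in(0,1],
}
after which the $n\geq 1$ contribution is also $\leq C(\theta)T^{1/4}$. I expect to handle this in two regimes: for $T^{1/2}$ bounded below, the series is comparable to $\int_0^\infty(x+1)\exp(-x^2/(8T^{1/2}))dx=4T^{1/2}+\sqrt{2\pi}\,T^{1/4}$, which divided by $T^{1/2}$ stays bounded; for small $T$ the $n=1$ term $2\exp(-1/(32T^{1/2}))$ already dominates the series and, via the substitution $y=T^{-1/2}$, its ratio with $T^{1/2}$ becomes the elementary one-variable expression $2y\,e^{-y/32}\leq 64/e$. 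The only other mild caveat is the passage from $\1_{A_n}$ to continuous majorants via Remark~\ref{RMK:non-cont}.
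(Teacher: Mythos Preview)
Your proposal is correct and follows essentially the same approach as the paper: decompose $u_{T/2}^{*,l}$ into a bulk piece on $(-\infty,T^{1/4}]$ handled by Lemma~\ref{LEM:bd-part-of-exp} and tail pieces on $(nT^{1/4},(n+1)T^{1/4}]$ handled via the double Cauchy--Schwarz and Lemmas~\ref{LEM:bd-small-t-small-mass-survives}--\ref{LEM:bd-small-t-small-mass-moves}. The only (cosmetic) difference is in bounding the tail sum: the paper simply factors $e^{-(n-1/2)^2/(8\sqrt{T})}\leq e^{-1/(64\sqrt{T})}e^{-(n-1/2)^2/(16\sqrt{T})}$, notes that the residual series is uniformly bounded for $T\leq 1$, and uses $e^{-c/\sqrt{T}}\leq C(\theta)T^{1/4}$ --- a one-line alternative to your two-regime analysis.
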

%
% ----------------------------------------------------------------------
%
\begin{proof}
Let $R \equiv T^{1/4}$. Apply the  monotonicity property from Remark~\ref{RMK:monotonicity-domination_super}(i) to the following countable sum of initial conditions to get
\eqan{
  \EE\!\left[ 0 \vee R_0\!\left( u_T^{*,l} \right) \right] \lbeq{equ:infinite_sum} 
  & = \EE\!\left[ \EE_{u_{T/2}^{*,l}}\!\left[ 0 \vee R_0(T/2) \right] \right] \\
  & \leq \EE\!\left[ \EE_{u_{T/2}^{*,l} \1_{(-\infty,R]}(\cdot)}\!\left[ 0 \vee R_0(T/2) \right] \right] + \sum_{n \geq 1} \EE\!\left[ \EE_{u_{T/2}^{*,l} \1_{(nR,(n+1)R]}(\cdot)}\!\left[ 0 \vee R_0(T/2) \right] \right]. \nn
}
The first term can be bounded by $C(\theta) T^{1/4}$ by Lemma~\ref{LEM:bd-part-of-exp}. To bound the summands of the second term, apply Cauchy-Schwarz' inequality twice: For fixed $n \in \NN$ and with the abbreviation $f = f(\omega) \equiv u_{T/2}^{*,l}(\omega) \cdot \1_{(nR,(n+1)R]}(\cdot)$ we have 
\eqan{
\lbeq{equ:both_factors} 
  \EE\!\left[ \EE_f\!\left[ 0 \vee R_0(T/2) \right] \right] 
  & = \EE\!\left[ \EE_f\!\left[ (0 \vee R_0(T/2)) \1_{\{\tau>T/2\}} \right] \right] 
  \leq \EE\!\left[ \left( \EE_f\!\left[ (0 \vee R_0(T/2))^2 \right] \PP_f( \tau>T/2 ) \right)^{1/2} \right] \\
  & \leq \left( \EE\!\left[ \EE_f\!\left[ (0 \vee R_0(T/2))^2 \right] \right] \EE\!\left[ \PP_f( \tau>T/2 ) \right] \right)^{1/2}. \nn
}

To bound the first factor use Lemma~\ref{LEM:bd-small-t-small-mass-moves}, to bound the second factor use Lemma~\ref{LEM:bd-small-t-small-mass-survives}. Collecting terms, we obtain
\eqan{
  & \EE\!\left[ 0 \vee R_0\!\left( u_T^{*,l} \right) \right] \\
  &\leq C(\theta) T^{1/4} + \sum_{n \geq 1} \left( \left\{ 4((n+1)R)^2 + C(\theta) T^{-1/2} e^{-\frac{((n+1)R)^2}{32 T}} \right\} C(\theta) T^{-1/2} \left( \theta + (R/2)^{-2} \right) e^{-\frac{((n-1/2)R)^2}{4T}} \right)^{1/2}. \nn
}
Recall the choice $R=T^{1/4}$ to conclude
\eqn{
  \EE\!\left[ 0 \vee R_0\!\left( u_T^{*,l} \right) \right] 
  \leq C(\theta) T^{1/4} + C(\theta) e^{-\frac{1}{64 \sqrt{T}}} \sum_{n \geq 1} (n+1)^2 e^{-\frac{(n-1/2)^2}{16 \sqrt{T}}}
  \leq C(\theta) T^{1/4}
}
as claimed. 
\end{proof}
%
% ------------------------------------------------------------
%
\begin{lemma} 
\label{LEM:bound_on_u_star_l_T}
For all $\theta>0, T \geq 1$,
\eqn{
  \EE\!\left[ 0 \vee R_0\!\left( u_T^{*,l} \right) \right] \leq C(\theta) T. 
}
\end{lemma}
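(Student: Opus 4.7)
The plan is to bootstrap from the bound for $T \leq 1$ established in Proposition~\ref{PRO:bound_on_u_star_l_t} to a linear bound for $T \geq 1$ by iterating a uniform unit-time advance estimate. The iteration is enabled by the semigroup identity $\SL(u_t^{(\upsilon_S)}) = \upsilon_{S+t}$ from Remark~\ref{RMK:on_u_star_l}(i). The key point is that, once the front of $u^{*,l}$ has advanced to some level, one can re-center the process spatially and control the additional advance over the next unit of time uniformly in the current configuration.

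First I would establish the following pointwise unit-time estimate: for every $f \in \SC_{tem}^+$,
\[
\EE\!\left[ 0 \vee R_0(u_1^{(f)}) \right] \leq (0 \vee R_0(f)) + C(\theta).
\]
To prove this, define the translate $\tilde f(x) \equiv f(x + (0 \vee R_0(f)))$, so that $R_0(\tilde f) \leq 0$. Translation invariance of \eqref{equ:SPDE} in the spatial variable (its drift, diffusion and driving white noise are all spatially translation invariant) gives $R_0(u_1^{(f)}) \stackrel{d}{=} R_0(u_1^{(\tilde f)}) + (0 \vee R_0(f))$, whence
\[
\EE\!\left[ 0 \vee R_0(u_1^{(f)}) \right] \leq (0 \vee R_0(f)) + \EE\!\left[ 0 \vee R_0(u_1^{(\tilde f)}) \right].
\]
Since $R_0(\tilde f) \leq 0$, Remark~\ref{RMK:on_u_star_l}(ii) applied with $T = 1$, $t = 0$ and $u_0 = \tilde f$ furnishes a coupling under which $u_1^{(\tilde f)}(x) \leq u_1^{*,l}(x)$ for all $x \in \RR$ almost surely, hence $R_0(u_1^{(\tilde f)}) \leq R_0(u_1^{*,l})$ almost surely; Proposition~\ref{PRO:bound_on_u_star_l_t} with $T = 1$ then bounds the last expectation by $C(\theta)$.

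Finally I would iterate. For $T > 1$ the semigroup identity reads $\upsilon_T = \SL(u_1^{(\upsilon_{T-1})})$, and integrating the pointwise estimate against $\upsilon_{T-1}$ yields
\[
\EE\!\left[ 0 \vee R_0(u_T^{*,l}) \right] \leq \EE\!\left[ 0 \vee R_0(u_{T-1}^{*,l}) \right] + C(\theta).
\]
Writing $T = n + s$ with $n \in \NN_0$ and $s \in (0, 1]$, I apply this inequality $n$ times to reduce to the base time $s$, which is handled by Proposition~\ref{PRO:bound_on_u_star_l_t}. This gives $\EE[0 \vee R_0(u_T^{*,l})] \leq C(\theta) s^{1/4} + n\, C(\theta) \leq C(\theta)(1 + T) \leq 2 C(\theta) T$ for $T \geq 1$. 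No step presents a serious obstacle; the only mildly delicate point is the measurability of $f \mapsto \EE[0 \vee R_0(u_1^{(f)})]$ needed to integrate against $\upsilon_{T-1}$, which follows from continuity of $f \mapsto \PP_f$ in Theorem~\ref{THM:tribe} combined with standard monotone-class arguments of the kind already exploited in the domination constructions above.
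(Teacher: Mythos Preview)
Your proof is correct and follows essentially the same route as the paper: both arguments re-center after each unit of time so that the shifted configuration has $R_0 \leq 0$, invoke the domination in Remark~\ref{RMK:on_u_star_l}(ii) to bound the one-step advance by $\EE[0 \vee R_0(u_1^{*,l})] \leq C(\theta)$ from Proposition~\ref{PRO:bound_on_u_star_l_t}, and then iterate via the semigroup identity $\SL(u_t^{(\upsilon_S)}) = \upsilon_{S+t}$. The only cosmetic differences are that you shift by $0 \vee R_0(f)$ while the paper shifts by $R_0(f)$ itself, and the paper isolates a preliminary induction showing $\EE[0 \vee R_0(u_n^{*,l})] < \infty$ before writing the telescoping sum, whereas you absorb this into the same iteration.
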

%
% ----------------------------------------------------------------------
%
\begin{proof}
We first note that for all $n \in \NN$, $\EE\!\left[ 0 \vee R_0(u_n^{*,l}) \right] < \infty$. Indeed, use induction: The claim follows for $n=1$ directly from Proposition~\ref{PRO:bound_on_u_star_l_t}. Suppose the claim holds for $n$ fixed. Remark~\ref{RMK:on_u_star_l}(ii) yields a coupling such that $u_1^{(u_n^{*,l})}(\cdot + R_0(u_n^{*,l})) \leq v$ holds a.s. with $\SL(v) \sim \upsilon_1$. As a result, 
\eqan{
  \EE\!\left[ 0 \vee R_0(u_{n+1}^{*,l}) \right] 
  &\leq \EE\!\left[ 0 \vee R_0(u_n^{*,l}) \right] + \EE\!\left[ 0 \vee R_0\!\left( u_1^{(u_n^{*,l})}(\cdot + R_0(u_n^{*,l})) \right) \right] \\
  &\leq \EE\!\left[ 0 \vee R_0(u_n^{*,l}) \right] + \EE\!\left[ 0 \vee R_0(v) \right] < \infty. \nn
}

Use Remark~\ref{RMK:on_u_star_l}(ii) and Proposition~\ref{PRO:bound_on_u_star_l_t} again (let $R_0(t)-R_0(s) \equiv 0$ for $0 \leq s \leq t$ and $u(s) \equiv 0$, then the well-definiteness of the differences in wave-markers follows by the above) to obtain
\eqan{
  \EE\!\left[ 0 \vee R_0\!\left( u_T^{*,l} \right) \right] 
  &\leq \EE\!\left[ 0 \vee R_0\!\left( u_1^{*,l} \right) \right] \\
  &\quad + \sum_{n=1}^{\left\lfloor T \right\rfloor - 1} \EE\!\left[ \EE_{u_n^{*,l}}\!\left[ 0 \vee (R_0(1)-R_0(0)) \right] \right] + \EE\!\left[ \EE_{u_{\left\lfloor T \right\rfloor}^{*,l}}\!\left[ 0 \vee (R_0(T- \left\lfloor T \right\rfloor )-R_0(0)) \right] \right] \nn\\
  &\leq \left\lfloor T \right\rfloor \EE\!\left[ 0 \vee R_0\!\left( u_1^{*,l} \right) \right] + \EE\!\left[ 0 \vee R_0\!\left( u_{T - \left\lfloor T \right\rfloor}^{*,l} \right) \right]
  \leq \left( \left\lfloor T \right\rfloor + 1 \right) C(\theta) \nn
}
and the claim follows for all $T \geq 1$ after an appropriate change of constant.
\end{proof}
%
% ----------------------------------------------------------------------
%
\begin{corollary} 
\label{COR:exp_pos_wavefront_bd}
For all $\theta>0$ and $u_0 \in \SC_{tem}^+$ with $R_0(u_0) \leq 0$ there exists $C(\theta)<\infty$ independent of $u_0$ such that
\eqn{
  \EE_{u_0}[0 \vee R_0(T)] \leq C(\theta) \left( T \vee T^{1/4} \right) 
}
for all $T \geq 0$.
\end{corollary}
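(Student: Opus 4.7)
The statement is essentially a direct consequence of the domination by the left upper measure, combined with the two preceding bounds on $\EE[0 \vee R_0(u_T^{*,l})]$. So the plan is to quote Remark~\ref{RMK:on_u_star_l}(ii) to reduce the general estimate to one for $u^{*,l}$, and then patch together the short-time and long-time bounds.

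Concretely, for $T=0$ there is nothing to prove, since the hypothesis $R_0(u_0) \leq 0$ gives $0 \vee R_0(0) = 0$. For $T > 0$, I would apply Remark~\ref{RMK:on_u_star_l}(ii) with the fixed parameter in that remark taken to be $T$ itself and with $t = 0$. Since $R_0(u_0) \leq 0$, the construction there (based on $\zeta_1 \geq u_0$) produces, on a common probability space, a solution $u^{(u_0)}$ to \eqref{equ:SPDE} started at $u_0$ and a random function $u_T^{*,l}$ with $\SL(u_T^{*,l}) = \upsilon_T$ such that $u_T^{(u_0)}(x) \leq u_T^{*,l}(x)$ for all $x \in \RR$ almost surely. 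In particular, by \eqref{equ:dom-left-upper},
\eqn{
  0 \vee R_0\!\left( u_T^{(u_0)} \right) \leq 0 \vee R_0\!\left( u_T^{*,l} \right) \quad \text{a.s.,}
}
so taking expectations yields $\EE_{u_0}[0 \vee R_0(T)] \leq \EE[0 \vee R_0(u_T^{*,l})]$, a bound that is now uniform in $u_0$.

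It then remains to split on $T$. For $0 < T \leq 1$, Proposition~\ref{PRO:bound_on_u_star_l_t} gives $\EE[0 \vee R_0(u_T^{*,l})] \leq C(\theta) T^{1/4}$. For $T \geq 1$, Lemma~\ref{LEM:bound_on_u_star_l_T} gives $\EE[0 \vee R_0(u_T^{*,l})] \leq C(\theta) T$. Combining these two estimates and absorbing into a single constant $C(\theta)$ yields the asserted bound $C(\theta)(T \vee T^{1/4})$ for all $T \geq 0$.

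There is no real obstacle here; the only thing to be careful about is that Remark~\ref{RMK:on_u_star_l}(ii) is formulated with a fixed time parameter and an additional running time $t \geq 0$, so one has to verify that applying it with $t=0$ gives exactly the pointwise bound at the desired time $T$, and that the resulting constant does not depend on $u_0$ (it does not, since $\upsilon_T$ is a fixed measure and the bounds in Proposition~\ref{PRO:bound_on_u_star_l_t} and Lemma~\ref{LEM:bound_on_u_star_l_T} depend only on $\theta$ and $T$).
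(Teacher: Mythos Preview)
Your proposal is correct and follows essentially the same approach as the paper: the paper's proof is a one-liner invoking Proposition~\ref{PRO:bound_on_u_star_l_t}, Lemma~\ref{LEM:bound_on_u_star_l_T}, and monotonicity (i.e., the domination by the left upper measure from Remark~\ref{RMK:on_u_star_l}(ii)), which is exactly what you carry out in detail.
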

%
% ----------------------------------------------------------------------
%
\begin{proof}
The claim follows by Proposition~\ref{PRO:bound_on_u_star_l_t}, Lemma~\ref{LEM:bound_on_u_star_l_T} and monotonicity.
\end{proof}
%
% ----------------------------------------------------------------------
%
Recall the definition of $L_0(f) = \inf\{ x \in \RR: f(x)>0 \}$.
%
% ----------------------------------------------------------------------
%
\begin{lemma} 
\label{LEM:exp_wavefront_bd}
Suppose $\theta>0$ and $g_0 \in \SC_c^+ \backslash \{0\}$ arbitrarily fixed. There exists $C(\theta)<\infty$ independent of $g_0$ such that for all $T \geq 0$,
\eqn{
  \EE_{g_0}[|R_0(T)| \1_{\{\tau>T\}}] 
  \leq |R_0(g_0)| + |L_0(g_0)| + C(\theta) \left( T \vee T^{1/4} \right) 
}
holds.
\end{lemma}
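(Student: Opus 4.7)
The plan is to split $|R_0(T)|\1_{\{\tau>T\}}$ into its positive part $R_0(T)\vee 0$ (on which the indicator is redundant, since $R_0(T)=-\infty$ forces the positive part to vanish on $\{\tau\le T\}$) and the negative part $(-R_0(T))\vee 0$ on $\{\tau>T\}$, and to handle each separately using Corollary~\ref{COR:exp_pos_wavefront_bd} together with two natural symmetries of the SPDE: translation invariance in $x$, and reflection invariance under $x\mapsto -x$.

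For the positive part, I would use translation invariance of \eqref{equ:SPDE}: if $u$ has law $\PP_{g_0}$, then $v(t,x)\equiv u(t,x+R_0(g_0))$ has law $\PP_{g_0(\cdot+R_0(g_0))}$, and this shifted initial condition satisfies $R_0(g_0(\cdot+R_0(g_0)))=0\le 0$. Since $R_0(u(T))=R_0(v(T))+R_0(g_0)$ and $0\vee(a+b)\le (0\vee a)+(0\vee b)$,
\begin{equation*}
\EE_{g_0}[0\vee R_0(T)] \;\le\; (0\vee R_0(g_0)) + \EE_{g_0(\cdot+R_0(g_0))}[0\vee R_0(T)] \;\le\; |R_0(g_0)| + C(\theta)(T\vee T^{1/4}),
\end{equation*}
where the last step invokes Corollary~\ref{COR:exp_pos_wavefront_bd}.

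For the negative part, I would first observe that on $\{\tau>T\}$ the support of $u(T)$ is nonempty, so $L_0(T)\le R_0(T)$ and hence $(-R_0(T))\vee 0\le (-L_0(T))\vee 0$. This bound extends the inequality to an estimate without the indicator:
\begin{equation*}
\EE_{g_0}\!\left[((-R_0(T))\vee 0)\,\1_{\{\tau>T\}}\right] \;\le\; \EE_{g_0}[(-L_0(T))\vee 0].
\end{equation*}
Now I apply the reflection symmetry: since \eqref{equ:SPDE} is invariant in law under $x\mapsto -x$ (the operator $\partial_{xx}$ is unchanged and $W(t,-x)$ is again space-time white noise), the process $u(t,-\cdot)$ under $\PP_{g_0}$ has law $\PP_{\tilde g_0}$ with $\tilde g_0(x)\equiv g_0(-x)$, and $R_0(\tilde g_0)=-L_0(g_0)$. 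Hence $\EE_{g_0}[(-L_0(T))\vee 0]=\EE_{\tilde g_0}[R_0(T)\vee 0]$, and the positive-part bound just derived applied to $\tilde g_0$ yields
\begin{equation*}
\EE_{\tilde g_0}[R_0(T)\vee 0] \;\le\; |R_0(\tilde g_0)| + C(\theta)(T\vee T^{1/4}) \;=\; |L_0(g_0)| + C(\theta)(T\vee T^{1/4}).
\end{equation*}

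Adding the two estimates and absorbing factors into the constant gives the stated bound. There is no genuine obstacle here beyond bookkeeping; the substantive work is already contained in Corollary~\ref{COR:exp_pos_wavefront_bd}. The only points that require a brief justification are the reduction to initial conditions with $R_0\le 0$ via translation, and the symmetry $\EE_{g_0}[(-L_0(T))\vee 0]=\EE_{\tilde g_0}[R_0(T)\vee 0]$, both of which follow from the $(x\mapsto x+c)$- and $(x\mapsto -x)$-invariance of \eqref{equ:SPDE} established through Theorem~\ref{THM:tribe}.
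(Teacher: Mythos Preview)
Your proposal is correct and follows essentially the same approach as the paper: both split $|R_0(T)|\1_{\{\tau>T\}}$ into its positive and negative parts, handle the positive part via translation by $R_0(g_0)$ and Corollary~\ref{COR:exp_pos_wavefront_bd}, and handle the negative part by dominating $(-R_0(T))\vee 0$ by $(-L_0(T))\vee 0$ on $\{\tau>T\}$ and then applying the reflection $x\mapsto -x$ (the paper's $v_t(x)=u_t(-x)$ is exactly your $\tilde g_0$) to reduce again to Corollary~\ref{COR:exp_pos_wavefront_bd}. Your handling of the indicator when passing to $(-L_0(T))\vee 0$ is also correct, since with the convention $L_0(T)=+\infty$ on $\{\tau\le T\}$ the term vanishes there.
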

%
% ----------------------------------------------------------------------
%
\begin{proof}
Corollary~\ref{COR:exp_pos_wavefront_bd} yields
\eqan{
  \EE_{g_0}[0 \vee R_0(T)] 
  &\leq |R_0(g_0)| + \EE_{g_0}[0 \vee (R_0(T)-R_0(0))] \\
  &= |R_0(g_0)| + \EE_{g_0(\cdot+R_0(0))}[0 \vee R_0(T)]
  \leq |R_0(g_0)| + C(\theta) \left( T \vee T^{1/4} \right). \nn
}

Further consider $v_t(x) \equiv u_t(-x)$ for all $t \geq 0$, where $u$ is a solution to \eqref{equ:SPDE} starting in $g_0$. As $g_0$ has compact support, $v_0 \in \SC_c^+$ and $v$ is a solution to \eqref{equ:SPDE} starting in $v_0$. Note in particular that
\eqn{
  R_0(v_T)=-L_0(u_T) \mbox{ on } \{ T < \tau \}
}
and $\EE_{v_0}[0 \vee R_0(T)] \leq |R_0(v_0)| + C(\theta) (T \vee T^{1/4})$. Then
\eqan{
  \EE_{g_0}[|R_0(T)| \1_{\{\tau>T\}}] 
  &= \EE_{g_0}[0 \vee R_0(T)] + \EE_{g_0}[-(0 \wedge R_0(T)) \1_{\{\tau>T\}}] \\
  &\leq \EE_{g_0}[0 \vee R_0(T)] + \EE_{g_0}[-(0 \wedge L_0(T)) \1_{\{\tau>T\}}] 
  = \EE_{g_0}[0 \vee R_0(T)] + \EE_{v_0}[0 \vee R_0(T)] \nn
}
and the claim follows.
\end{proof}
%
% ----------------------------------------------------------------------
%
%%%%%%%%%%%%%%%%%%%%%%%%%%%%%%%
%
\begin{center}
\framebox{For the remainder of the article assume $\theta>\theta_c$, unless otherwise indicated.}
\end{center}
%
%%%%%%%%%%%%%%%%%%%%%%%%%%%%%%%
%
% ----------------------------------------------------------------------
%

We now prove a result in the spirit of \cite[Lemma~3.6]{T1996}. Recall the definition of $\PP_{\nu_T}$ with $\nu_T=\nu_T(g_0)$ from \eqref{equ:def_nu_T}. 
%
% ----------------------------------------------------------------------
%
\begin{lemma} 
\label{LEM:3_6}
If $\theta>\theta_c, t>0$ and $g_0 \in \SC_c^+ \backslash \{0\}$, then there exists $C(g_0,\theta,t)$ such that for all $a > 0, 0 \leq s \leq t$ and $T \geq 1$,
\eqn{
  \PP_{\nu_T}( |R_0(s)| \geq a ) \leq \frac{C(g_0,\theta,t)}{a}. 
}
In particular, for $0<t \leq 1$,
\eqn{
  \PP_{\nu_T}( |R_0(s)| \geq a ) \leq \frac{C(g_0,\theta) t^{1/4}}{a}
}
holds.
\end{lemma}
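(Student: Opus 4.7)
The strategy is Markov's inequality combined with the Fubini-type representation of $\PP_{\nu_T}$ in \eqref{equ:def_nu_T}, the Markov family property from Theorem~\ref{THM:tribe}, and the uniform wavefront estimates already developed in Corollary~\ref{COR:exp_pos_wavefront_bd} and Lemma~\ref{LEM:exp_wavefront_bd}. Throughout, ``$|R_0(s)|$'' on the extinction set is handled via the natural convention, i.e. I work with $|R_0(s)|\1_{\{\tau>s\}}$.

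I would first apply Markov's inequality to get $\PP_{\nu_T}(|R_0(s)|\geq a)\leq a^{-1}\EE_{\nu_T}[|R_0(s)|\1_{\{\tau>s\}}]$ and then unravel the definition of $\PP_{\nu_T}$ together with the Markov family property to write
$$
\EE_{\nu_T}[|R_0(s)|\1_{\{\tau>s\}}] = \frac{1}{T\,\PP_{g_0}(\tau=\infty)}\int_0^T \EE_{g_0}\!\left[\1_{\{\tau=\infty\}}\,\EE_{u_r(\cdot+R_0(r))}[|R_0(s)|\1_{\{\tau>s\}}]\right]dr.
$$
On $\{\tau=\infty\}$ the initial function $f=u_r(\cdot+R_0(r))$ of the inner fresh process lies in $\SC_c^+$ with $R_0(f)=0$ and $|L_0(f)|=R_0(r)-L_0(r)$, so Lemma~\ref{LEM:exp_wavefront_bd} applied to $f$ yields
$$
\EE_{u_r(\cdot+R_0(r))}[|R_0(s)|\1_{\{\tau>s\}}] \leq (R_0(r)-L_0(r)) + C(\theta)(s\vee s^{1/4}).
$$

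Plugging this back in, I would then apply Lemma~\ref{LEM:exp_wavefront_bd} a second time to the \emph{original} process started at $g_0$: once to bound $\EE_{g_0}[R_0(r)\1_{\{\tau>r\}}]$, and once, via the reflection $u_t(x)\mapsto u_t(-x)$, to bound $\EE_{g_0}[-L_0(r)\1_{\{\tau>r\}}]$. Each of these is controlled by $|R_0(g_0)|+|L_0(g_0)|$ plus $C(\theta)(r\vee r^{1/4})$. Averaging over $r\in[0,T]$, restricting $s\in[0,t]$, and absorbing the survival probability $\PP_{g_0}(\tau=\infty)\in(0,1)$ into the constant gives $\PP_{\nu_T}(|R_0(s)|\geq a)\leq C(g_0,\theta,t)/a$. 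For the ``in particular'' clause with $0<t\leq 1$ one uses $s\vee s^{1/4}=s^{1/4}\leq t^{1/4}$ and $r\vee r^{1/4}=r^{1/4}\leq 1$, so that $s^{1/4}$ becomes the dominant factor and yields the sharper $C(g_0,\theta)\,t^{1/4}$.

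The step I expect to be most delicate is keeping the constant uniform in $T\geq 1$: a direct application of Lemma~\ref{LEM:exp_wavefront_bd} to $(R_0(r)-L_0(r))\1_{\{\tau>r\}}$ gives an estimate whose polynomial-in-$r$ term produces $T^{-1}\int_0^T(r\vee r^{1/4})\,dr$ of order $T$ rather than order one (reflecting the fact that, conditional on survival, the support width grows linearly). Removing this $T$-dependence requires a sharper treatment of the $|L_0(f)|$ term, for which a domination argument based on the left upper measure $\upsilon_{r/2}$ of Remark~\ref{RMK:on_u_star_l}(ii), combined with Proposition~\ref{PRO:bound_on_u_star_l_t}, provides the natural tool: the upper measure's relevant wavefront statistics are insensitive to the linear drift of the actual process's $|L_0|$.
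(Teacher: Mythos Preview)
Your proposal correctly identifies its own fatal flaw but does not actually repair it. The direct route via Lemma~\ref{LEM:exp_wavefront_bd} forces the term $|L_0(f)|=R_0(r)-L_0(r)$ into the bound on the inner expectation, and this width grows linearly in $r$ on $\{\tau=\infty\}$ (Proposition~\ref{PRO:overall_mass_and_support}); after averaging over $r\in[0,T]$ the estimate is of order $T$, not uniform. Your suggested remedy---invoking the left upper measure $\upsilon$ and Proposition~\ref{PRO:bound_on_u_star_l_t}---only controls the \emph{positive} part $0\vee R_0(s)$ of the inner process (this is precisely how the paper handles $\PP_{\nu_T}(R_0(s)\geq a)$, see \eqref{equ:bound_right_dom_wave}). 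It does nothing for $-(0\wedge R_0(s))\1_{\{\tau>s\}}$, which is where the $|L_0(f)|$ dependence enters in the proof of Lemma~\ref{LEM:exp_wavefront_bd}: the right wavefront of a process started at $f$ can genuinely retreat all the way to $L_0(f)$ once the right tail dies out, so no upper-measure domination centered at $R_0(f)=0$ removes this term.

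The paper's treatment of the negative tail is entirely different and is the idea you are missing. One does not bound $\EE_{\nu_T}[|R_0(s)|]$; instead one computes the \emph{signed} expectation $\EE_{\nu_T}[R_0(s)]$, which by the Markov property equals
\[
(\PP_{g_0}(\tau=\infty)\,T)^{-1}\int_0^T\EE_{g_0}\!\big[\1_{\{\tau=\infty\}}\big(R_0(r+s)-R_0(r)\big)\big]\,dr.
\]
After Fubini this telescopes to $\int_T^{T+s}(\cdots)\,dr-\int_0^s(\cdots)\,dr$: two integrals over intervals of length $s$, so that the linear growth of $\EE_{g_0}[|R_0(r)|\1_{\{\tau>r\}}]$ from Lemma~\ref{LEM:exp_wavefront_bd} contributes at most $C(g_0,\theta)\,s(T+s)/T\leq C(g_0,\theta)\,s(1+s)$ for $T\geq1$, uniformly in $T$. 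One then extracts the negative tail via $\EE_{\nu_T}[R_0(s)]\leq\EE_{\nu_T}[0\vee R_0(s)]-a\,\PP_{\nu_T}(R_0(s)\leq -a)$. This cancellation is also what yields the sharp $t^{1/4}$ dependence in the second assertion: every surviving term carries a factor $s$ or $s^{1/4}$. Your approach, by contrast, would retain the $g_0$-dependent constants $|R_0(g_0)|+|L_0(g_0)|$ and the width term regardless of how small $t$ is---and your claim ``$r\vee r^{1/4}=r^{1/4}\leq1$'' is simply false, since $r$ ranges over $[0,T]$.
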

%
% ----------------------------------------------------------------------
%
\begin{proof}
The claim is obvious for $s=0$. We bound first for $s>0, a \geq 0$,
\eqan{
  \PP_{\nu_T}( R_0(s) \geq a )
  &=
  \frac{1}{\PP_{g_0}(\tau=\infty) T} \int_0^T \EE_{g_0}\!\left[ \1_{\{\tau=\infty\}} \PP_{u_r(\cdot+R_0(r))}(R_0(s) \geq a) \right] dr \\
  &\leq \frac{1}{\PP_{g_0}(\tau=\infty) T} \int_0^T \EE_{g_0}\!\left[ \1_{\{\tau=\infty\}} \PP\!\left( R_0\!\left( u_s^{*,l} \right) \geq a \right) \right] ds 
  = \PP\!\left( R_0\!\left( u_s^{*,l} \right) \geq a \right). \nn
}
Using Proposition~\ref{PRO:bound_on_u_star_l_t} and Lemma~\ref{LEM:bound_on_u_star_l_T} this yields for all $s>0$,
\eqn{
\lbeq{equ:bound_right_dom_wave}
  \PP_{\nu_T}( R_0(s) \geq a ) \leq \frac{1}{a} \EE\!\left[ 0 \vee R_0\!\left( u_s^{*,l} \right) \right] \leq \frac{C(\theta) \left( s \vee s^{1/4} \right)}{a}.
}
To prove the second half of the lemma, we follow the reasoning of the proof of \cite[Lemma~3.6]{T1996}. First observe that (by Lemma~\ref {LEM:exp_wavefront_bd} above the integrands are well-defined and Fubini's theorem can be applied)
\eqan{
  \EE_{\nu_T}[ R_0(s) ] 
  &= \frac{1}{\PP_{g_0}(\tau=\infty) T} \int_0^T \EE_{g_0}\!\left[ \1_{\{\tau=\infty\}} \EE_{u_r(\cdot+R_0(r))}[ R_0(s) ] \right] dr \\
  &= \frac{1}{\PP_{g_0}(\tau=\infty) T} \int_0^T \EE_{g_0}\!\left[ \1_{\{\tau=\infty\}} (R_0(r+s) - R_0(r)) \right] dr \nn\\
  &= \frac{1}{\PP_{g_0}(\tau=\infty) T} \left( \int_T^{T+s} \EE_{g_0}\!\left[ \1_{\{\tau=\infty\}} R_0(r) \right] dr - \int_0^s \EE_{g_0}\!\left[ \1_{\{\tau=\infty\}} R_0(r) \right] dr \right). \nn
}
Use
\eqn{
  \EE_{\nu_T}[ R_0(s) ]
  \leq \EE_{\nu_T}[ 0 \vee R_0(s) ] -a \PP_{\nu_T}( R_0(s) \leq -a)
}
and rearrange terms to conclude
\eqan{
\lbeq{equ:bound-nuT-1}
  & \PP_{\nu_T}( R_0(s)) \leq -a) \\
  & \leq \frac{1}{a} \left\{ \EE_{\nu_T}[ 0 \vee R_0(s) ] - \frac{1}{\PP_{g_0}(\tau=\infty) T} \left( \int_T^{T+s} \EE_{g_0}\!\left[ \1_{\{\tau=\infty\}} R_0(r) \right] dr - \int_0^s \EE_{g_0}\!\left[ \1_{\{\tau=\infty\}} R_0(r) \right] dr \right) \right\}. \nn
}
For the second and third term on the right hand side, Lemma~\ref{LEM:exp_wavefront_bd} yields 
\eqn{
  - \int_T^{T+s} \EE_{g_0}\!\left[ \1_{\{\tau=\infty\}} R_0(r) \right] dr \leq C(g_0,\theta) \int_T^{T+s} \left( 1 + \left( r \vee r^{1/4} \right) \right) dr
}
respectively 
\eqn{
\lbeq{equ:bound-nuT-2}
  \int_0^s \EE_{g_0}\!\left[ \1_{\{\tau=\infty\}} R_0(r) \right] dr \leq C(g_0,\theta) \int_0^s \left( 1 + \left( r \vee r^{1/4} \right) \right) dr. 
}
For the first term reason as in \eqref{equ:bound_right_dom_wave} to see that 
\eqan{
\lbeq{equ:bound-nuT-3}
  \EE_{\nu_T}[ 0 \vee R_0(s) ]
  \leq \EE[ 0 \vee R_0(u_s^{*,l}) ]  
  \leq C(\theta) \left( s \vee s^{1/4} \right). \nn
}
Collecting terms we get for $T \geq 1$,
\eqan{
  &\PP_{\nu_T}( R_0(s) \leq -a) \\
  &\leq \frac{1}{a} \left\{ C(\theta) \left( s \vee s^{1/4} \right) + C(g_0,\theta) \frac{1}{T} s \left( 1 + \left( (T+s) \vee (T+s)^{1/4} \right) \right) + C(g_0,\theta) \frac{1}{T} s \left( 1 + \left( s \vee s^{1/4} \right) \right) \right\} \nn\\
  &\leq \frac{1}{a} \left\{ C(\theta) \left( s \vee s^{1/4} \right) + C(g_0,\theta) s (1+s) + C(g_0,\theta) s \left( 1 + \left( s \vee s^{1/4} \right) \right) \right\} \nn
}
and the claim follows. 
\end{proof}
%
% ----------------------------------------------------------------------

%%%%%%%%%%%%%%%%%%%%%%%%%%%%%%%
%
\section{Construction of travelling wave solutions arising from initial conditions with compact support: Proof of Theorem~\ref{THM:trav_wave_exists} and Proposition~\ref{PRO:limit_never_zero}}
\label{SEC:construction_tw}
%
%%%%%%%%%%%%%%%%%%%%%%%%%%%%%%%
%
% ----------------------------------------------------------------------
%

We are now in a position to prove the analogue of \cite[Lemma~3.7]{T1996}.
%
% ----------------------------------------------------------------------
%
\begin{lemma} \label{LEM:tightness}
If $\theta>\theta_c$ and $g_0 \in \SC_c^+ \backslash \{0\}$ then the sequence $\{ \nu_T: T \in \NN \}$ from Definition~\ref{DEF:nu_T} is tight.
\end{lemma}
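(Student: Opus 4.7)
My plan is to follow the structure of \cite[Lemma~3.7]{T1996}, applying the Kolmogorov tightness criterion (conditions (i) and (ii) recalled in the proof of Proposition~\ref{PRO:tightness_T}) to the random function whose law is $\nu_T$, verifying both conditions uniformly in $T \in \NN$. The two main ingredients will be Jensen's inequality (to move the moments inside the time-average defining $\nu_T$) and stochastic domination by the left upper measure $\upsilon_s$ from Remark~\ref{RMK:on_u_star_l}(ii), combined with the wavefront estimates of Section~\ref{SEC:estimates_wfm}.

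For condition (i), Jensen's inequality in the time variable gives, for each $p \geq 2$,
\begin{equation*}
  \EE_{\nu_T}\!\left[ \langle f, e^{-|\cdot|} \rangle^p \right]
  \leq \frac{1}{\PP_{g_0}(\tau=\infty)\,T} \int_0^T \EE_{g_0}\!\left[ \1_{\{\tau=\infty\}} \langle u_s(\cdot + R_0(s)), e^{-|\cdot|} \rangle^p \right] ds.
\end{equation*}
Because $u_s(\cdot+R_0(s))$ is supported on $(-\infty,0]$ on $\{\tau>s\}$, I will split the inner integral at some $-N$, writing $\langle u_s(\cdot+R_0(s)), e^{-|\cdot|}\rangle$ as the mass of $u_s$ on $[R_0(s)-N, R_0(s)]$ plus a tail which, thanks to $e^{x-R_0(s)} \leq e^{-N}$ on $\{x \leq R_0(s)-N\}$, contributes at most $e^{-N}\langle u_s,1\rangle$. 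After WLOG translating $g_0$ so that $R_0(g_0)=0$ (by translation invariance of \eqref{equ:SPDE}), I will control the local mass by coupling $u_s(x) \leq u_s^{*,l}(x)$ and invoking the moment bounds on $u_s^{*,l}$ established in the proof of Proposition~\ref{PRO:u_star} (via \cite[Lemma~3.3]{T1996}), which produce a uniform-in-$s$ constant and hence a uniform-in-$T$ bound after averaging.

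For condition (ii), the analogous Jensen reduction yields
\begin{equation*}
  \EE_{\nu_T}\!\left[ |f(x) - f(x')|^p \right]
  \leq \frac{1}{\PP_{g_0}(\tau=\infty)\,T} \int_0^T \EE_{g_0}\!\left[ \1_{\{\tau=\infty\}} |u_s(x+R_0(s)) - u_s(x'+R_0(s))|^p \right] ds.
\end{equation*}
For a fixed $s$, my plan is to condition on $\SF_{s/2}$, apply the strong Markov property, translate the conditional initial datum by $R_0(s/2)$ so that its right wavefront is at $0$, and dominate the result by $u_{s/2}^{*,l}$. Then \cite[Lemma~3.4]{T1996} applied over the remaining half time window gives a Hölder-in-space bound of the form $C(\theta,p)|x-x'|^{p/2-1}$ for $|x-x'|\leq 1$, with a constant independent of the conditional initial datum. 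Choosing $p>4$ verifies (ii) with $\gamma=p/2-1>1$ and $\xi=0<\lambda$.

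The main obstacle I expect is that the random shift $R_0(s)$ is a nonlinear path functional that does not commute cleanly with the monotone coupling to $u_s^{*,l}$: we have $R_0(u_s) \leq R_0(u_s^{*,l})$, but the gap can be arbitrarily large, so one cannot simply replace $u_s(\cdot + R_0(u_s))$ by $u_s^{*,l}(\cdot + R_0(u_s^{*,l}))$. The remedy is to anchor all comparisons at $R_0(u_s)$ and use only the pointwise domination $u_s(x)\leq u_s^{*,l}(x)$, picking up a correction factor whose expectation can be controlled by Proposition~\ref{PRO:bound_on_u_star_l_t} and Lemma~\ref{LEM:3_6}. Once both (i) and (ii) are established, tightness of $\{\nu_T : T \in \NN\}$ in $\CP(\SC_{tem}^+)$ follows from the Kolmogorov criterion.
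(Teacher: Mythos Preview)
Your plan follows the right template---Kolmogorov criterion, a time-shift to access the uniform regularity estimates of \cite[Lemmas~3.3--3.4]{T1996}, and wavefront-displacement control via Lemma~\ref{LEM:3_6}---but two concrete execution steps would fail as written.

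For condition~(i), the tail piece $e^{-N}\langle u_s,1\rangle$ in your splitting is not controllable uniformly in~$s$: on $\{\tau=\infty\}$ the total mass $\langle u_s,1\rangle\to\infty$ almost surely (Proposition~\ref{PRO:overall_mass_and_support}), so its moments under $\PP_{g_0}(\,\cdot\mid\tau=\infty)$ explode and the time-average does not stay bounded in~$T$. The paper bypasses moments for~(i) and instead bounds the tail \emph{probability} $\nu_T(\{f:\langle f,\phi_1\rangle>N\})$ via a time-shift by a \emph{fixed} $\delta>0$: write $u_s(\cdot+R_0(s))$ as the time-$\delta$ evolution of $u_{s-\delta}(\cdot+R_0(s-\delta))$, shifted by $R_0(s)-R_0(s-\delta)$. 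The shift-correction becomes $\PP_{\nu_T}(|R_0(\delta)|\geq a)$, handled by Lemma~\ref{LEM:3_6}, and the remaining term by \cite[Lemma~3.3]{T1996} at the fixed time~$\delta$; the range $s<\delta$ contributes only $\delta/T$.

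For condition~(ii), your shift by $s/2$ causes two problems. First, the constant $C(\theta,p,s/2)$ in \cite[Lemma~3.4]{T1996} is not uniform as $s\downarrow 0$: the spatial regularity at time~$T$ comes from heat-semigroup smoothing over $[0,T]$, and since the initial datum is merely continuous the bound must degenerate at small times. Second, even after conditioning on $\SF_{s/2}$, the evaluation points $x+R_0(s),\,x'+R_0(s)$ remain random and correlated with the very path segment $u|_{[s/2,s]}$ whose increments you wish to bound; \cite[Lemma~3.4]{T1996} applies only at deterministic spatial points, and pointwise domination $u_s\leq u_s^{*,l}$ gives you nothing for increments. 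The paper's remedy (following \cite{T1996}) is to shift back a \emph{fixed} unit of time, so the constant is $C(\theta,p,1)$, and to isolate the displacement $R_0(s)-R_0(s-1)$ as a separate term~II whose contribution is $\PP_{\nu_T}(|R_0(1)|\geq a)$, again controlled by Lemma~\ref{LEM:3_6}; the range $s<1$ contributes at most $1/T$.
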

%
% ----------------------------------------------------------------------
%
\begin{proof}
The proof is similar to the proof of \cite[Lemma~3.7]{T1996}, except for the changes detailed below. To not confuse the reader in what follows, we note two small misprints in \cite{T1996} that are without influence on the rest of the proof. Namely, in the second and third line of the system of equations in the proof of Lemma~3.7, it should read $U(t,\cdot+R_1(t))$ respectively $U(t,\cdot+R_1(t-1))$ instead of $U(t,\cdot-R_1(t))$ respectively $U(t,\cdot-R_1(t-1))$. To adapt the proof to our setting, change all the wavefront markers from $R_1$ to $R_0$, initial conditions from $f_0$ to $g_0$, condition on the event $\{ \tau=\infty \}$ and proceed analogously to \cite{T1996} until one obtains terms $I$ and $II$. To bound term II, use that $\PP_{\nu_T}(|R_0(1)| \geq a) \leq C(g_0,\theta)/a$ by Lemma~\ref{LEM:3_6}. Term I can be bounded as in \cite{T1996}. Note that \cite{T1996} uses the definition of the wavefront marker $R_1$ to show that $\nu_T(\{ f: \langle f , \phi_1 \rangle \leq 1 ) = 1$. As we use $R_0$ instead, we proceed differently. 

Indeed, reason as above to conclude for $N \in \NN$ and $a, \delta>0$ arbitrary,
\eqan{
  \nu_T(\{ f: \langle f , \phi_1 \rangle > N \}) 
  \leq&\;  \PP_{\nu_T}(|R_0(\delta)| \geq a) + \delta/T \\
  &+ (\PP_{g_0}(\tau=\infty))^{-1} T^{-1} \int_\delta^T \PP_{g_0}\!\left( \PP_{u_{s-\delta}(\cdot+R_0(s-\delta))}\!\left( \langle u_\delta , \phi_1 \rangle > N e^{-a} \right) \right) ds. \nn
}
Choose $\delta$ small enough, then $a$ big enough to see that the first two terms can be made arbitrarily small, uniformly in $T \in \NN$. For the last term choose $N$ big enough and reason as in \eqref{cond_i_1}--\eqref{cond_i_2}.
\end{proof}
%
% ----------------------------------------------------------------------

To prove the analogue of \cite[Theorem~3.8]{T1996}, that is Theorem~\ref{THM:trav_wave_exists}, we first need to prove a statement along the lines of \cite[(27)--(30)]{T1996}. The first property and the second part of the third property ($<\infty$) follow directly from the definition of $R_0$. The remaining properties are replaced by the statements in Proposition~\ref{PRO:limit_never_zero}. Before proving this proposition, we prove the following first.
%
% ----------------------------------------------------------------------
%
\begin{lemma} \label{LEM:mass_somewhere_in_limit}
Let $\theta>\theta_c$ and $g_0 \in \SC_c^+ \backslash \{0\}$. Let $t \geq 0$ and $a, m>0$, $0<b \leq 1$ be arbitrarily fixed. Then 
\eqan{
  &\PP_{\nu_T}\!\left( \big\langle u_t(\cdot + R_0(u_t)) , \1_{(-2a,\infty)}(\cdot) \big\rangle < m \right) \\
   &\leq \left( \left( 1 - \frac{C(\theta) b^{1/4}}{a} \right) \vee 0 \right)^{-1} \left\{
  \frac{T+t}{T} \frac{C(g_0,\theta) b^{1/4}}{a} + \left( 1-e^{-2\theta \frac{m}{1-e^{-\theta b}}} \right) \right\} \nn
}
for all $T \in \NN$.
\end{lemma}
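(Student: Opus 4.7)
The plan is to decompose $\PP_{\nu_T}(A)$, where $A = \{\langle u_t, \1_{(R_0(u_t)-2a,\infty)}(\cdot)\rangle < m\}$ (interpreted via Remark~\ref{RMK:non-cont}), according to whether the right wavefront recedes significantly over the interval $[t,t+b]$. Setting $E = \{R_0(u_{t+b}) < R_0(u_t) - a\}$, I would write $\PP_{\nu_T}(A) = \PP_{\nu_T}(A \cap E) + \PP_{\nu_T}(A \cap E^c)$, bound the first term unconditionally by $\PP_{\nu_T}(E)$ (pushing the time-average forward to $\nu_{T+t}$ and invoking Lemma~\ref{LEM:3_6}), and bound the second via the strong Markov property at $t$ together with a monotonicity decomposition of $u_t$ into a left part (dominated by the left upper measure) and a right part (which is small in mass on $A$ and hence extinguishes with high probability).

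For $\PP_{\nu_T}(E)$, translation invariance of \eqref{equ:SPDE} and the strong Markov property of $\PP_{g_0}$ imply that under $\PP_{\nu_T}$ the increment $R_0(u_{t+b}) - R_0(u_t)$ has the same distribution as $R_0(u_{r+t+b}) - R_0(u_{r+t})$ under $\PP_{g_0}(\cdot \mid \tau=\infty)$, with $r$ uniform on $[0,T]$. The change of variables $r' = r+t$ and enlarging the range of integration from $[t,T+t]$ to $[0,T+t]$ gives
\[
  \PP_{\nu_T}(E) \leq \tfrac{T+t}{T}\, \PP_{\nu_{T+t}}(R_0(u_b) - R_0(u_0) < -a) = \tfrac{T+t}{T}\, \PP_{\nu_{T+t}}(R_0(u_b) < -a),
\]
where $R_0(u_0) = 0$ almost surely under $\nu_{T+t}$ by construction. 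Since $T+t \geq 1$ and $0 < b \leq 1$, Lemma~\ref{LEM:3_6} (with parameter $t \to b$ and $T \to T+t$) bounds the right-hand side by $\tfrac{T+t}{T}\, C(g_0,\theta)b^{1/4}/a$.

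For $\PP_{\nu_T}(A \cap E^c)$, the strong Markov property at $t$ gives $\PP_{\nu_T}(A \cap E^c) = \EE_{\nu_T}[\1_A\, \PP_{u_t}(R_0(u_b) \geq R_0(u_t) - a)]$. On $A$, decompose $u_t = v + w$ with $v = u_t\,\1_{(-\infty, R_0(u_t)-2a]}$ and $w = u_t\,\1_{(R_0(u_t)-2a,\infty)}$, so that $\langle w,1\rangle < m$ and $R_0(v) \leq R_0(u_t)-2a$. Monotonicity (Remark~\ref{RMK:monotonicity-domination_super}(i)) produces a coupling with $u_b^{(u_t)} \leq \tilde v_b + \tilde w_b$ for independent solutions to \eqref{equ:SPDE} started at $v$ and $w$, whence
\[
  \PP_{u_t}(R_0(u_b) \geq R_0(u_t) - a) \leq \PP(\tilde w_b \not\equiv 0) + \PP(R_0(\tilde v_b) \geq R_0(u_t) - a).
\]
By \eqref{equ:bound_die_out_t} the first summand is at most $1 - e^{-2\theta m/(1-e^{-\theta b})}$, and coupling with the left upper measure (Remark~\ref{RMK:on_u_star_l}(ii)), Markov's inequality and Proposition~\ref{PRO:bound_on_u_star_l_t} bound the second by $C(\theta)b^{1/4}/a$.

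Combining the two estimates yields $\PP_{\nu_T}(A) \leq \tfrac{T+t}{T}\tfrac{C(g_0,\theta)b^{1/4}}{a} + \PP_{\nu_T}(A)\bigl[(1 - e^{-2\theta m/(1-e^{-\theta b})}) + \tfrac{C(\theta)b^{1/4}}{a}\bigr]$. Moving the $\PP_{\nu_T}(A)\cdot C(\theta)b^{1/4}/a$ term to the left and using the trivial bound $\PP_{\nu_T}(A) \leq 1$ on the remaining $\PP_{\nu_T}(A)(1-e^{-\cdots})$ factor on the right produces $\PP_{\nu_T}(A)(1 - C(\theta)b^{1/4}/a) \leq \tfrac{T+t}{T}\tfrac{C(g_0,\theta)b^{1/4}}{a} + (1-e^{-2\theta m/(1-e^{-\theta b})})$; dividing by $1 - C(\theta)b^{1/4}/a$ (or noting triviality when this is non-positive) gives the claim. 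The main obstacle is executing the pushforward step $\PP_{\nu_T}(E) \leq \tfrac{T+t}{T}\PP_{\nu_{T+t}}(\cdots)$ cleanly, which requires carefully combining the time-averaged definition of $\nu_T$, the strong Markov property and translation invariance; the remaining arguments are the standard coupling and domination estimates already used in the article.
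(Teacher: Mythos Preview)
Your decomposition into $A\cap E$ and $A\cap E^c$, together with the monotonicity splitting $u_t=v+w$, the superprocess extinction bound for $w$, and the left upper measure bound for $v$, are exactly the ingredients the paper uses. The handling of $\PP_{\nu_T}(A\cap E^c)$ is correct and matches the paper's estimate.

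The gap is in the pushforward step for $\PP_{\nu_T}(E)$. Your claim that under $\PP_{\nu_T}$ the increment $R_0(u_{t+b})-R_0(u_t)$ has the same law as $R_0(u_{r+t+b})-R_0(u_{r+t})$ under $\PP_{g_0}(\,\cdot\mid\tau=\infty)$ with $r$ uniform on $[0,T]$ is \emph{not} correct. Writing $E'=\{R_0(v_{t+b})-R_0(v_t)<-a\}$ for the fresh process, one has
\[
  \PP_{\nu_T}(E)=\frac{1}{T\,\PP_{g_0}(\tau=\infty)}\int_0^T \EE_{g_0}\bigl[\1_{\{\tau>r\}}\,\PP_{u_r}(\tau=\infty)\,\PP_{u_r}(E')\bigr]\,dr,
\]
whereas the time-averaged conditional probability you want on the right equals
\[
  \frac{1}{T\,\PP_{g_0}(\tau=\infty)}\int_0^T \EE_{g_0}\bigl[\1_{\{\tau>r\}}\,\PP_{u_r}(E'\cap\{\tau=\infty\})\bigr]\,dr.
\]
These agree only if $E'$ and $\{\tau=\infty\}$ are independent under $\PP_{u_r}$, which is false; indeed, heuristically a large recession of the wavefront is \emph{negatively} correlated with survival, so the inequality you would need (your expression $\leq$ the other) is likely to go the wrong way. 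The same issue recurs when you identify the resulting integral with $\PP_{\nu_{T+t}}(R_0(u_b)<-a)$.

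The paper sidesteps this entirely by working in the reverse direction: it starts from $\PP_{\nu_{T+t}}(R_0(b)<-a)$, restricts the time integral in the \emph{definition} of $\nu_{T+t}$ from $[0,T+t]$ to $[t,T+t]$ (a clean one-sided bound, no Markov property or conditioning needed), and then lower bounds each integrand via the same $v+w$ splitting you use. This produces
\[
  \PP_{\nu_{T+t}}(R_0(b)<-a)\;\geq\;\frac{T}{T+t}\Bigl(\Bigl(1-\tfrac{C(\theta)b^{1/4}}{a}\Bigr)\vee 0\Bigr)\,\PP_{\nu_T}(A)\;-\;\frac{T}{T+t}\bigl(1-e^{-2\theta m/(1-e^{-\theta b})}\bigr),
\]
after which Lemma~\ref{LEM:3_6} bounds the left side and one rearranges. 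Your argument is essentially this one read backwards; rewriting it in the paper's order removes the need for the problematic pushforward.
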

%
% ----------------------------------------------------------------------
%
\begin{proof}
We have by Remark~\ref{RMK:def_nu_T}
\eqn{
  \PP_{\nu_{T+t}}( R_0(b) < -a ) 
  \geq \frac{1}{(T+t) \PP_{g_0}(\tau=\infty)} \int_0^T \EE_{g_0}\!\left[ \1_{\{\tau=\infty\}} \PP_{u_{s+t}(\cdot+R_0(s+t))}( R_0(b) < -a ) \right] ds. 
}
Monotonicity at time $s+t$ yields (recall the notation with non-continuous initial conditions from Notation~\ref{RMK:non-cont})
\eqan{
\lbeq{equ:show_wave_does_not_die_out} 
  & \PP_{\nu_{T+t}}( R_0(b) < -a ) \\
  &\geq \frac{1}{(T+t) \PP_{g_0}(\tau=\infty)} \int_0^T \EE_{g_0}\!\left[ \1_{\{\tau=\infty\}} \PP_{\1_{(-2a,\infty)}(\cdot) u_{s+t}(\cdot+R_0(s+t))} \left( \langle u_0 , 1 \rangle < m , \tau \leq b \right) \right. \nn \\
  &\quad \times \left. \PP_{\1_{(-\infty,-2a]}(\cdot) u_{s+t}(\cdot+R_0(s+t))} \left( R_0(b) < -a \right) \right] ds. \nn
}
The first probability in the product can be bounded below by 
\eqan{
\lbeq{equ:first-prob-b}
  & \PP_{\1_{(-2a,\infty)}(\cdot) u_{s+t}(\cdot+R_0(s+t))}( \langle u_0 , 1 \rangle < m , \tau \leq b ) \\
  &\geq \PP_{\1_{(-2a,\infty)}(\cdot) u_{s+t}(\cdot+R_0(s+t))}( \langle u_0 , 1 \rangle < m ) - \left( 1-e^{-2\theta \frac{m}{1-e^{-\theta b}}} \right), \nn
}
where we used \eqref{equ:bound_die_out_t} in the last line. For the second probability in the product in \eqref{equ:show_wave_does_not_die_out} we have
\eqn{
  \PP_{\1_{(-\infty,-2a]}(\cdot) u_{s+t}(\cdot+R_0(s+t))}( R_0(b) < -a ) 
  \geq \PP\!\left( R_0\!\left( u_b^{*,l} \right) < a \right)
  \geq \left( 1 - \frac{C(\theta) b^{1/4}}{a} \right) \vee 0
}
by Markov's inequality and Proposition~\ref{PRO:bound_on_u_star_l_t}. We obtain 
\eqan{
  & \PP_{\nu_{T+t}}( R_0(b) < -a ) \\
  &\geq \frac{\left( 1 - \frac{C(\theta) b^{1/4}}{a} \right) \vee 0}{(T+t) \PP_{g_0}(\tau=\infty)} \int_0^T \EE_{g_0}\!\left[ \1_{\{\tau=\infty\}} \PP_{\1_{(-2a,\infty)}(\cdot) u_{s+t}(\cdot+R_0(s+t))} \left( \langle u_0 , 1 \rangle < m \right) \right] ds \nn\\
  &\quad - \frac{\left( 1-e^{-2\theta \frac{m}{1-e^{-\theta b}}} \right)}{(T+t) \PP_{g_0}(\tau=\infty)} \int_0^T \EE_{g_0}\!\left[ \1_{\{\tau=\infty\}} \right] ds \nn\\
  &= \frac{T}{T+t} \left\{ \left( \left( 1 - \frac{C(\theta) b^{1/4}}{a} \right) \vee 0 \right) \PP_{\nu_T}\!\left( \big\langle u_t(\cdot + R_0(u_t)) , \1_{(-2a,\infty)}(\cdot) \big\rangle < m \right)  - \left( 1-e^{-2\theta \frac{m}{1-e^{-\theta b}}} \right) \right\}. \nn
}
Use Lemma~\ref{LEM:3_6} and rearrange terms to see that 
\eqan{
  &\PP_{\nu_T}\!\left( \big\langle u_t(\cdot + R_0(u_t)) , \1_{(-2a,\infty)}(\cdot) \big\rangle < m \right) \frac{T}{T+t} \left( \left( 1 - \frac{C(\theta) b^{1/4}}{a} \right) \vee 0 \right) \\
  &\leq \frac{C(g_0,\theta) b^{1/4}}{a} + \frac{T}{T+t} \left( 1-e^{-2\theta \frac{m}{1-e^{-\theta b}}} \right) \nn
}
which concludes the proof.
\end{proof}
%
% ----------------------------------------------------------------------
%
We now have all the ingredients together to prove Proposition~\ref{PRO:limit_never_zero}. Recall that by Lemma~\ref{LEM:tightness} there exists a subsequence $\nu_{T_n}$ converging to some $\nu \in \CP(\SC_{tem}^+)$. 
%
% ----------------------------------------------------------------------
%
\begin{proof}[Proof of Proposition~\ref{PRO:limit_never_zero}]
By definition of $\nu_T$ and $R_0(t)=R_0(u_t)$ we have $\nu_T(\{ f: R_0(f) = 0 \})=1$ for all $T \geq 1$. As the set $\{ f \in \SC_{tem}^+: R_0(f) \leq 0 \}$ is closed, 
\eqn{
  \nu(\{ f: R_0(f) > 0 \}) \leq \lim_{n \rightarrow \infty} \nu_{T_n}(\{ f: R_0(f)>0 \}) = 0
}
follows. 

Next let $t \geq 0$ be arbitrarily fixed. Let $m>0$ small and $A>0$ big. We obtain with $\phi_1(x) = \exp(-|x|)$ and as $\nu_{T_n} \Rightarrow \nu$ yields $\PP_{\nu_{T_n}} \Rightarrow \PP_\nu$ by Theorem~\ref{THM:tribe}, 
\eqan{
  \PP_\nu(u_t \equiv 0) 
  & \leq \PP_\nu( \langle u_t , \phi_1 \rangle < e^{-2A} m ) 
  \leq \liminf_{n \rightarrow \infty} \PP_{\nu_{T_n}}\!\left( \big\langle u_t , \phi_1 \big\rangle < e^{-2A} m \right) \\
  & \leq \liminf_{n \rightarrow \infty} \left\{ \PP_{\nu_{T_n}}\!\left( |R_0(t)| > A \right) + \PP_{\nu_{T_n}}\!\left( \big\langle u_t(\cdot + R_0(t)) , \1_{(-A,0)}(\cdot) \big\rangle < m , |R_0(t)| \leq A \right) \right\}. \nn
}
The first summand can be bounded by Lemma~\ref{LEM:3_6} and we get as a result
\eqn{
  \PP_\nu(u_t \equiv 0) \leq \frac{C(g_0,\theta,t)}{A} + \liminf_{n \rightarrow \infty} \PP_{\nu_{T_n}} \!\left( \big\langle u_t(\cdot + R_0(t)) , \1_{(-A,0)}(\cdot) \big\rangle < m \right). 
}
Choose $b=1$, $m$ small enough and $A$ big enough in Lemma~\ref{LEM:mass_somewhere_in_limit} to see that the second summand becomes arbitrarily small. By further increasing $A$ the first summand becomes arbitrarily small, too. This proves the second half of the proposition. Moreover, $\nu(\{ f: -\infty < R_0(f) \leq 0 \}) = 1$ follows. 

Consider the case $t=0$. Let $a>0$ arbitrarily small and $M, \lambda>0$ big. Let $\phi_{a,\lambda}(x) \equiv \exp(-\lambda |x+a|)$, then
\eqn{
  \PP_\nu( R_0(0)  \leq -a )
  \leq \PP_\nu\!\left( \big\langle u_0 , \phi_{a,1} \big\rangle > M \right) + \PP_\nu\!\left( \big\langle u_0 , \phi_{0,\lambda} \big\rangle < e^{-\lambda a} M \right).
}
The first term can be bounded using Corollary~\ref{COR:coupling_dominated_by_upper} and reasoning as in \eqref{equ:bound_expectation_u_star-calc} by
\eqn{
  \PP_\nu\!\left( \big\langle u_0 , \phi_{a,1} \big\rangle > M \right) 
  \leq M^{-1} \theta \langle \phi_{a,1} , 1 \rangle.
}
This yields 
\eqn{
  \PP_\nu( R_0(0)  \leq -a )
  \leq \frac{2 \theta}{M} + \liminf_{n \rightarrow \infty} \PP_{\nu_{T_n}}\!\left( \big\langle u_0 , \1_{(-a/2,0)}(\cdot) \big\rangle < e^{-\lambda a/2} M \right).
}
Choose $M$ big enough to make the first term small. Set $m \equiv \exp(-\lambda a/2) M$. Note that for $a, M$ fixed, $m$ can be made arbitrarily small by choosing $\lambda$ arbitrarily big. To make the second summand arbitrarily small, apply Lemma~\ref{LEM:mass_somewhere_in_limit} by first choosing $b$ small enough such that $b^{1/4}/a$ is small and afterwards choosing $m$ small enough. Hence, for all $a>0$, $\nu(\{ f: R_0(f) \leq -a \}) = \PP_\nu( R_0(0)  \leq -a )=0$ and the remaining claim follows. 
\end{proof}
%
% ----------------------------------------------------------------------
%
\noindent\textit{Proof of Theorem~\ref{THM:trav_wave_exists}.}
We first introduce a set of approximating wavefront-markers tailored to $R_0(f)$. Note that $R_0(f)$ is not continuous on $\SC_{tem}^+$. In what follows let $m>0$ and $N \in \NN$ be arbitrarily fixed. For $f \in \SC_{tem}^+$ set
\eqn{
  R^{m,N}(f) = R^{m,N}(f \1_{[-N,N]}) \equiv \sup\!\left\{ x \in [-N,N]: f(x)>0 \mbox{ and } \big\langle f , 1(x < \cdot \leq N) \big\rangle \geq m \right\}
}
with the convention that $\sup \emptyset = -N$ in the above. We note that $m \mapsto R^{m,N}(f) \in [-N,N]$ for all $f \in \SC_{tem}^+$ and 
\eqn{
  R^{m,N}(f) \uparrow R_0\big( f \1_{[-N,N]} \big) \vee (-N) =: R^{0,N}(f) \mbox{ as } m \downarrow 0^+.
}
Let $\Phi \in \SC_c^+$ be a fixed smooth function supported on $(-1,0)$ such that $\int \Phi(x) dx = 1$, use $\cstar$ to denote convolution of functions and set $\Phi_{m_0}(x) \equiv (1/m_0) \Phi(x/m_0)$ for $m_0>0$ fixed. Finally set
\eqn{
  R_{m_0}^N(f) \equiv \left( \Phi_{m_0} \cstar R^{\LargerCdot,N}(f) \right)\!(0) = \int_0^{m_0} \Phi_{m_0}(-m) R^{m,N}(f) dm.
} 
This wavefront marker is continuous on $\SC_{tem}^+$ and takes values in $[-N,N]$. By \cite[Theorem~8.15]{bF1999}, $R_{m_0}^N(f) \rightarrow R^{0,N}(f)$ for $m_0 \downarrow 0^+$. By definition of $\Phi$ and $R^{m,N}(f)$ we further have $R^{m,N}(f) \leq R_{m_0}^N(f) \leq R^{0,N}(f)$ for all $m \geq m_0$. 

In what follows let $t \geq 0, \epsilon>0$ be arbitrarily fixed. By Lemma~\ref{LEM:3_6} and Proposition~\ref{PRO:limit_never_zero}, there exists $N=N(t,\epsilon) \in \NN$ big enough such that
\eqn{
\lbeq{stationary-1}
  \sup_{T \in \NN} \PP_{\nu_T}\!\left( R_0(u_t) \neq R^{0,N}(u_t) \right) + \PP_\nu\!\left( R_0(u_t) \neq R^{0,N}(u_t) \right) < \epsilon.
}
By Lemma~\ref{LEM:mass_somewhere_in_limit}, Proposition~\ref{PRO:limit_never_zero} and the definition of $R_{m_0}^N(f)$, for all $\delta>0$ there exists $m_0=m_0(t,\epsilon,N,\delta)>0$ small enough such that
\eqn{
\lbeq{stationary-2}
  \sup_{T \in \NN} \PP_{\nu_T}\!\left( 0 \leq R^{0,N}(u_t) - R_{m_0}^N(u_t) < \delta \right) + \PP_\nu\!\left( 0 \leq R^{0,N}(u_t) - R_{m_0}^N(u_t) < \delta \right) < \epsilon.
}
Let $\nu_{T_n}$ be a subsequence that converges to $\nu$. Then Theorem~\ref{THM:tribe} yields $\PP_{\nu_{T_n}} \Rightarrow \PP_\nu$. Hence, there exists a compact set $K=K(t,\epsilon) \subset \SC_{tem}^+$ big enough such that
\eqn{
\lbeq{stationary-3}
  \sup_{n \in \NN} \PP_{\nu_{T_n}}( u_t \not\in K) + \PP_\nu( u_t \not\in K) < \epsilon.
}
Let $F: \SC_{tem}^+ \rightarrow \RR$ be an arbitrarily fixed bounded and continuous function. By the characterization of compact subsets of $\SC_{tem}^+$ (see for instance the introduction of \cite{T1996}), there exists $\delta=\delta(K,F,N)>0$ small enough such that
\eqn{
\lbeq{stationary-4}
  \sup_{0 \leq |a| \leq \delta} \sup_{0 \leq |b| \leq N} \sup_{f \in K}| F(f(\cdot+b)) - F(f(\cdot + b + a))| < \epsilon.
}

To complete the proof, let $t \geq 0, \epsilon>0$ and $F: \SC_{tem}^+ \rightarrow \RR$ bounded and continuous be arbitrarily fixed. Choose $N \in \NN$ big enough such that \eqref{stationary-1} holds and compact $K \subset \SC_{tem}^+$ big enough such that \eqref{stationary-3} holds. Then choose $\delta>0$ small enough such that \eqref{stationary-4} holds and subsequently $m_0$ small enough such that \eqref{stationary-2} holds. From \eqref{stationary-1}--\eqref{stationary-4} we conclude that 
\eqan{
  & \sup_{n \in \NN} \EE_{\nu_{T_n}}\!\left[ \left| F(u_t(\cdot + R_0(u_t))) - F(u_t(\cdot + R_{m_0}^N(u_t))) \right| \right] \lbeq{stationary-5} \\
  & + \EE_\nu\!\left[ \left| F(u_t(\cdot + R_0(u_t))) - F(u_t(\cdot + R_{m_0}^N(u_t))) \right| \right] < 3 \epsilon \normx{F}{\infty} + \epsilon \equiv \epsilon(F) \nn
}
holds. By the continuity of $R_{m_0}^N(f)$, we further have for all $t \geq 0$ fixed,
\eqn{
  \big| \EE_{\nu_{T_n}}\!\left[ F(u_t(\cdot - R_{m_0}^N(u_t))) \right] - \EE_\nu\!\left[ F(u_t(\cdot - R_{m_0}^N(u_t))) \right] \big| \rightarrow 0 \mbox{ for } n \rightarrow \infty. 
}
Together with \eqref{stationary-5} this yields 
\eqan{
  & \left| \EE_\nu\!\left[ F(u_t(\cdot + R_0(u_t))) \right] - \nu(F) \right| \\
  & \leq \epsilon(F) + \left| \EE_\nu\!\left[ F(u_t(\cdot + R_{m_0}^N(u_t))) \right] - \nu(F) \right| \nn\\
  & = \epsilon(F) + \left| \lim_{n \rightarrow \infty} \EE_{\nu_{T_n}}\!\left[ F(u_t(\cdot + R_{m_0}^N(u_t))) \right] - \nu(F) \right| \nn\\
  & \leq 2\epsilon(F) + \left| \lim_{n \rightarrow \infty} \EE_{\nu_{T_n}}\!\left[ F(u_t(\cdot + R_0(u_t))) \right] - \nu(F) \right| \nn\\
  & = 2\epsilon(F) + \left| \lim_{n \rightarrow \infty} (\PP_{g_0}(\tau=\infty) T_n)^{-1} \int_0^{T_n} \EE_{g_0}\!\left[ \1_{\{\tau=\infty\}} F(u_{s+t}(\cdot + R_0(u_{s+t}))) \right] ds - \nu(F) \right| \nn\\
  & = 2\epsilon(F) + \left| \lim_{n \rightarrow \infty} (\PP_{g_0}(\tau=\infty) T_n)^{-1} \int_0^{T_n} \EE_{g_0}\!\left[ \1_{\{\tau=\infty\}} F(u_s(\cdot + R_0(u_s))) \right] ds - \nu(F) \right| \nn\\
  & = 2\epsilon(F), \nn
}
where in the second inequality we used that both limits exist. Take $\epsilon \rightarrow 0^+$ to see that under $\PP_\nu$ the one-dimensional marginals of $\big( u_t(\cdot + R_0(u_t)) \big)_{t \geq 0}$ have law $\nu$. It is straightforward to check that the process is also Markov. The process is therefore stationary in time and with Proposition~\ref{PRO:limit_never_zero} the claim follows.
\qed
%
% ======================================================================
%
\section{Recurrence}
\label{SEC:recurrence}
%
% ======================================================================
%
\noindent\textit{Proof of Theorem~\ref{THM:coming_back}.}
Let $\{\nu_T\}_{T \in \NN}$ be as in Definition~\ref{DEF:nu_T} and $\nu_{T_k}$ a subsequence that converges to $\nu$ for $k \rightarrow \infty$ as given by Lemma~\ref{LEM:tightness}. For $M>0, \phi_0 \in \SC_c^+$ let 
\eqn{
  A_{M,\phi_0} \equiv \{ \phi \in \SC_{tem}^+: \exists |x| \leq M \mbox{ such that } \phi > \phi_0(\cdot - x) \} \subset \SC_{tem}^+. 
}
By tightness of $\{\nu_{T_k}\}_{k \in \NN}$ and Proposition~\ref{PRO:limit_never_zero}, for all $\epsilon>0$ there exist $M>0$ big, $\phi_0 \in \SC_c^+ \backslash \{0\}$ small and $k_0 \in \NN$ big enough such that
\eqn{
  \nu_{T_k}( A_{M,\phi_0} ) > 1-\epsilon, \qquad \forall k \geq k_0. 
}
The definition of $\nu_{T_k}$, Fubini-Tonelli's theorem and the analogue for the travelling wave to the left of the support yield by means of a proof by contradiction, that $M>0$, $\phi_0 \in \SC_c^+ \backslash \{0\}$ can be further chosen such that
\eqn{
\lbeq{equ:bigger-phi-0}
  \PP_{g_0}\!\left( \forall n \in \NN \ \exists t \geq n: u_t(\cdot + R_0(t)) \in A_{M,\phi_0} \mbox{ and } u_t(\cdot + L_0(t)) \in A_{M,\phi_0} \;|\; \tau=\infty \right) > 1-2\epsilon.
}
Before we continue with the proof of Theorem~\ref{THM:coming_back}, we establish the following result first.
%
% ----------------------------------------------------------------------
%
\begin{lemma}
\label{LEM:for-all-outside-dom}
Let $\theta>\theta_c$ and $\psi_0 \in \SC_c^+$, then
\eqn{
\lbeq{equ:lemma-for-all-outside-dom}
  \PP_{g_0}\!\left( \forall n \in \NN \ \exists t \geq n: u_{t+1}(\cdot + R_0(t)) \geq \psi_0 \mbox{ and } u_{t+1}(\cdot + L_0(t)) \geq \psi_0 \;|\; \tau=\infty \right) = 1. 
}
\end{lemma}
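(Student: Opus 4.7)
\smallskip
\noindent\textit{Plan.} The plan is to prove the two halves of \eqref{equ:lemma-for-all-outside-dom} separately, namely that $\{\forall n\,\exists\, t\geq n:\,u_{t+1}(\cdot+R_0(t))\geq\psi_0\}$ and its $L_0$-analogue each carry full $\PP_{g_0}(\,\cdot\mid\tau=\infty)$-mass, and then intersect. Only the $R_0$-statement is discussed; the $L_0$-statement follows by applying the same argument to the reflection $v_t(x)=u_t(-x)$ already used in Lemma~\ref{LEM:exp_wavefront_bd}. The argument combines \eqref{equ:bigger-phi-0}, the strong Markov property, monotonicity, a one-step positivity estimate for \eqref{equ:SPDE}, and conditional Borel--Cantelli.

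\smallskip
\noindent\textit{Stopping times and key conditional estimate.} Fix $\epsilon>0$. The weaker ($R_0$-only) conjunct in \eqref{equ:bigger-phi-0} gives $M>0$ and $\phi_0\in\SC_c^+\setminus\{0\}$ with
$$\PP_{g_0}(B_\epsilon^R\mid\tau=\infty)>1-2\epsilon,\qquad B_\epsilon^R\equiv\{\forall n\,\exists\, t\geq n:\,u_t(\cdot+R_0(t))\in A_{M,\phi_0}\}.$$
Since $A_{M,\phi_0}$ is open in $\SC_{tem}^+$, one may define stopping times $T_0\equiv 0$ and $T_{k+1}\equiv\inf\{t\geq T_k+2:\,u_t(\cdot+R_0(t))\in A_{M,\phi_0}\}$, all finite on $B_\epsilon^R$, together with an $\SF_{T_k}$-measurable selection $x_k\in[-M,M]$ satisfying $u_{T_k}\geq\phi_0(\cdot-R_0(T_k)-x_k)$ on $\{T_k<\infty\}$. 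Define the envelope $\tilde\psi_0(z)\equiv\sup_{|x|\leq M}\psi_0(z+x)\in\SC_c^+$ and observe that $\psi_0(y)\leq\tilde\psi_0(y-x_k)$ since $|x_k|\leq M$. Combining the strong Markov property (Theorem~\ref{THM:tribe}b)) at time $T_k$, the monotonicity coupling of Remark~\ref{RMK:monotonicity-domination_super}(i) against the lower bound $\phi_0(\cdot-R_0(T_k)-x_k)$, and the translation invariance of \eqref{equ:SPDE} then yields
$$\PP_{g_0}\bigl(u_{T_k+1}(\cdot+R_0(T_k))\geq\psi_0\,\big|\,\SF_{T_k}\bigr)\;\geq\;q\;\;\text{on }\{T_k<\infty\},\qquad q\equiv\PP_{\phi_0}(u_1\geq\tilde\psi_0).$$

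\smallskip
\noindent\textit{Conclusion via conditional Borel--Cantelli.} Let $E_k$ denote the event on the left of the display above, so $E_k\in\SF_{T_k+1}\subset\SF_{T_{k+1}}$. Provided $q>0$, L\'evy's conditional second Borel--Cantelli lemma along $(\SF_{T_k})_{k\geq 0}$ forces $B_\epsilon^R\cap\{\tau=\infty\}\subseteq\{E_k\text{ i.o.}\}$ almost surely, so
$$\PP_{g_0}\bigl(\forall n\,\exists\, t\geq n:\,u_{t+1}(\cdot+R_0(t))\geq\psi_0\,\big|\,\tau=\infty\bigr)\;\geq\;\PP_{g_0}(B_\epsilon^R\mid\tau=\infty)>1-2\epsilon.$$
Letting $\epsilon\downarrow 0$ promotes this to probability $1$; symmetry handles the $L_0$-statement, and intersecting the two full-measure events completes \eqref{equ:lemma-for-all-outside-dom}.

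\smallskip
\noindent\textit{Main obstacle.} The whole argument hinges on the one-step positivity estimate $q=\PP_{\phi_0}(u_1\geq\tilde\psi_0)>0$ for arbitrary $\phi_0\in\SC_c^+\setminus\{0\}$ and $\tilde\psi_0\in\SC_c^+$. This is a Stroock--Varadhan/Mueller-type ``full-support'' statement for \eqref{equ:SPDE}: one produces a control $h$ so that the controlled PDE $\partial_t w=\partial_{xx}w+\theta w-w^2+w^{1/2}h$ starting from $\phi_0$ reaches a profile strictly above $\tilde\psi_0$ at time $1$, then invokes the corresponding support theorem for the multiplicative-noise SPDE \eqref{equ:SPDE}. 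This is the only step outside the toolkit already assembled in Sections~\ref{SEC:self_duality_and_uim}--\ref{SEC:estimates_wfm}, and I expect it to be the main technical hurdle of a rigorous write-up.
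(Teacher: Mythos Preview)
Your decomposition into the two halves is a genuine logical gap. The event in \eqref{equ:lemma-for-all-outside-dom} asks that for every $n$ there exist a \emph{single} time $t\geq n$ at which \emph{both} $u_{t+1}(\cdot+R_0(t))\geq\psi_0$ and $u_{t+1}(\cdot+L_0(t))\geq\psi_0$ hold simultaneously. What you prove is that the set $\{t:u_{t+1}(\cdot+R_0(t))\geq\psi_0\}$ is unbounded a.s.\ and, separately, that $\{t:u_{t+1}(\cdot+L_0(t))\geq\psi_0\}$ is unbounded a.s. Intersecting these two full-measure events gives only that each set is unbounded, not that their intersection is. (The right-front condition could hold along one sequence of times and the left-front condition along a disjoint sequence.) This matters downstream: the stopping times $\tilde\tau_n$ in \eqref{equ:def-tilde-tau} are defined by the conjunction, and \eqref{equ:def-epsilon-prime} relies on $\lim_n\tilde\tau_n=\infty$ a.s.

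The paper handles exactly this point. It works from the start with the joint event in \eqref{equ:bigger-phi-0}, defining stopping times $\tau_n$ at which \emph{both} $u_t(\cdot+R_0(t))\in A_{M,\phi_0}$ and $u_t(\cdot+L_0(t))\in A_{M,\phi_0}$. The crucial extra ingredient you are missing is a mechanism to make the two one-step dominations succeed \emph{together}: the paper waits (via Proposition~\ref{PRO:overall_mass_and_support}) until $R_0(\tau_n)-L_0(\tau_n)\geq D$ is large, and then invokes the coupling of \cite[Lemma~2.1.7]{MT1994} so that the two seed profiles near $R_0(\tau_n)$ and $L_0(\tau_n)$ evolve as \emph{independent} solutions over $[\tau_n,\tau_n+1]$ (up to first collision, which is unlikely for $D$ large). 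This converts your single-front bound $\delta$ into a joint bound $\delta^2$ for the simultaneous event, after which a geometric-series argument (equivalent to your conditional Borel--Cantelli) finishes the proof. Your stopping times $T_k$ and positivity input $q>0$ are fine for the one-sided claim, but to reach the actual statement you must run them on the joint event and supply this separation/independence step.
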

%
% ----------------------------------------------------------------------
%
\noindent\textit{Proof of Lemma~\ref{LEM:for-all-outside-dom}.}
First observe that for $M>0, \phi_0 \in \SC_c^+ \backslash \{0\}$ arbitrary, there exists $\delta=\delta(M,\phi_0,\psi_0)>0$ such that
\eqn{
\lbeq{equ:move-from-small-to-big}
  \inf_{|x| \leq M} \PP_{\phi_0(\cdot - x)}( u_1 \geq \psi_0) > 2\delta. 
}
The idea of the proof of \eqref{equ:lemma-for-all-outside-dom} is a geometric series type of argument. To this goal, set $\tau_0 \equiv 0$ and
\eqn{
  \tau_n = \tau_n(M,\phi_0) \equiv \inf_{t>\tau_{n-1}+1} \{ u_t(\cdot + R_0(t)) \in A_{M,\phi_0} \mbox{ and } u_t(\cdot + L_0(t)) \in A_{M,\phi_0} \}, \quad n \in \NN 
}
with the convention that $\tau_n \equiv -\infty$ if $\tau_{n-1}=-\infty$ or if the infimum is taken over an empty set. Let $\epsilon>0$ be arbitrarily fixed. By \eqref{equ:bigger-phi-0}, we can choose $M>0, \phi_0 \in \SC_c^+ \backslash \{0\}$ such that
\eqn{
\lbeq{equ:tau-infty}
  \PP_{g_0}( \lim_{n \rightarrow \infty} \tau_n = \infty \;|\; \tau=\infty ) > 1-2\epsilon.
}
Let $D>0$ be arbitrary, to be chosen later on. By Proposition~\ref{PRO:overall_mass_and_support} and the compact support property, there exists $I_0 \in \NN$ big enough such that
\eqn{
\lbeq{equ:tau-infty-d}
  \PP_{g_0}( \{ \lim_{n \rightarrow \infty} \tau_n = \infty \} \cap \{ R_0(\tau_i)-L_0(\tau_i) \geq D, \forall i \geq I_0 \} \;|\; \tau=\infty ) > 1-3\epsilon.
}
Next fix $K, I \in \NN$ arbitrary with $I \geq I_0$. Condition on $\SF_{\tau_{I+K}}$ to get
\eqan{
  & \PP_{g_0}\Big( \bigcap_{n \in \{ I+1,\ldots,I+K\}} \Big( \left\{ \tau_n > -\infty \right\} \cap \left\{ R_0(\tau_n)-L_0(\tau_n) \geq D \right\} \lbeq{rec-lemma-1} \\
  & \qquad\qquad\qquad\qquad\qquad \cap \big\{ u_{\tau_n+1}(\cdot + R_0(\tau_n)) \wedge u_{\tau_n+1}(\cdot + L_0(\tau_n)) \geq \psi_0 \big\}^c \Big) \;\big|\; \tau=\infty \Big) \nn \\
  & \leq \tfrac{1}{\PP_{g_0}(\tau=\infty)} \EE_{g_0}\!\left[ \1_{\{ \tau_{I+1} > -\infty \}} \1_{\{ R_0(\tau_{I+1}) - L_0(\tau_{I+1}) \geq D \}} \1_{\big\{ u_{\tau_{I+1}+1}(\cdot + R_0(\tau_{I+1})) \wedge u_{\tau_{I+1}+1}(\cdot + L_0(\tau_{I+1})) \geq \psi_0 \big\}^c} \times \cdots \right. \nn\\
  & \quad \times \1_{\{ \tau_{I+K-1} > -\infty \}} \1_{\{ R_0(\tau_{I+K-1}) - L_0(\tau_{I+K-1}) \geq D \}} \1_{\big\{ u_{\tau_{I+K-1}+1}(\cdot + R_0(\tau_{I+K-1})) \wedge u_{\tau_{I+K-1}+1}(\cdot + L_0(\tau_{I+K-1})) \geq \psi_0 \big\}^c}  \nn\\
  & \quad \times \left. \1_{\{ \tau_{I+K} > -\infty \}} \1_{\{ R_0(\tau_{I+K}) - L_0(\tau_{I+K}) \geq D \}} \EE_{g_0}\!\left[ \1_{\big\{ u_{\tau_{I+K}+1}(\cdot + R_0(\tau_{I+K})) \wedge u_{\tau_{I+K}+1}(\cdot + L_0(\tau_{I+K})) \geq \psi_0 \big\}^c} \;\big|\; \SF_{\tau_{I+K}} \right] \right]. \nn
}

Reason as in \cite[Lemma~2.1.7]{MT1994} to see that for $D \in \RR, D>R_0(\phi_0)-L_0(\phi_0)$, there exists a coupling such that $u^{(\phi_0)}, u^{(\phi_0(\cdot-D))}, u^{(\phi_0+\phi_0(\cdot-D))}$ are solutions to \eqref{equ:SPDE}, starting in $\phi_0$, $\phi_0(\cdot-D)$ respectively $\phi_0+\phi_0(\cdot-D)$ such that $u^{(\phi_0)}$, $u^{(\phi_0(\cdot-D))}$ are independent and
\eqn{
  u^{(\phi_0)}_t + u^{(\phi_0(\cdot-D))}_t = u^{(\phi_0+\phi_0(\cdot-D))}_t \mbox{ for } t \leq \inf\!\left\{s \geq 0: R_0\!\left(u^{(\phi_0)}_s\right)>L_0\!\left(u^{(\phi_0(\cdot-D))}_s\right) \right\}.
}
By the compact support property and \eqref{equ:move-from-small-to-big} it follows that there exists $D=D(M,\phi_0,\psi_0,\delta)>0$ big enough such that
\eqn{
  \inf_{D' \geq D} \inf_{|x_l|, |x_r| \leq M} \EE_{\phi_0(\cdot-x_l)+\phi_0(\cdot-D'-x_r)}\Big[ \1_{\big\{ u_1 \geq \psi_0+\psi_0(\cdot-D') \big\}} \Big]
  \geq \delta^2.
}
By monotonicity in the initial condition we obtain, for $I \in \NN$ big enough, as an upper bound to the last term of the right hand side of \eqref{rec-lemma-1},
\eqan{
  & \1_{\{ \tau_{I+K} > -\infty \}} \1_{\{ R_0(\tau_{I+K}) - L_0(\tau_{I+K}) \geq D \}} \EE_{g_0}\!\left[ \1_{\big\{ u_{\tau_{I+K}+1}(\cdot + R_0(\tau_{I+K})) \wedge u_{\tau_{I+K}+1}(\cdot + L_0(\tau_{I+K})) \geq \psi_0 \big\}^c} \;\big|\; \SF_{\tau_{I+K}} \right] \\
  & < 1-\delta^2. \nn
}
Iteration of the argument results in the upper bound $\tfrac{(1-\delta^2)^K}{\PP_{g_0}(\tau=\infty)}$ to \eqref{rec-lemma-1}.

By \eqref{equ:tau-infty-d} and this upper bound, we finally have for $K \in \NN$ arbitrarily fixed,
\eqan{
  & \PP_{g_0}\big( \big\{ \forall n \in \NN \ \exists t \geq n: u_{t+1}(\cdot + R_0(t)) \geq \psi_0 \mbox{ and } u_{t+1}(\cdot + L_0(t)) \geq \psi_0 \big\}^c \;\big|\; \tau=\infty \big) \\
  & \leq 3\epsilon + \lim_{I \rightarrow \infty} \PP_{g_0}\Big( \bigcap_{n \in \{ I+1,\ldots,I+K\}} \Big( \left\{ \tau_n > -\infty \right\} \cap \left\{ R_0(\tau_n)-L_0(\tau_n) \geq D \right\} \nn \\
  & \qquad\qquad\qquad\qquad\qquad \cap \big\{ u_{\tau_n+1}(\cdot + R_0(\tau_n)) \wedge u_{\tau_n+1}(\cdot + L_0(\tau_n)) \geq \psi_0 \big\}^c \Big) \;\big|\; \tau=\infty \Big) \nn \\
  & \leq 3\epsilon + (1-\delta^2)^K / \PP_{g_0}( \tau=\infty ). \nn
}
Choose $K \rightarrow \infty$ and let $\epsilon \downarrow 0^+$ to conclude the claim.
\qed\\
%
% ----------------------------------------------------------------------
%

\noindent\textit{Continuation of the proof of Theorem~\ref{THM:coming_back}.}
Let $\psi_0 \in \SC_c^+ \backslash \{0\}$ arbitrary. Set $\tilde{\tau}_0 \equiv 0$ and
\eqn{
\lbeq{equ:def-tilde-tau}
  \tilde{\tau}_n = \tilde{\tau}_n^{(\psi_0)} \equiv \inf_{t>\tilde{\tau}_{n-1}+1} \{ u_{t+1}(\cdot + R_0(t)) \geq \psi_0 \mbox{ and } u_{t+1}(\cdot + L_0(t)) \geq \psi_0 \}, \quad n \in \NN
}
with the convention that $\tilde{\tau}_n \equiv -\infty$ if $\tilde{\tau}_{n-1}=-\infty$ or if the infimum is taken over an empty set. We remark at this point already that $\tilde{\tau}_n$ is not a stopping time itself but that $\tilde{\tau}_n+1$ is. By Proposition~\ref{PRO:overall_mass_and_support} and Lemma~\ref{LEM:for-all-outside-dom},
\eqn{
\lbeq{equ:def-epsilon-prime}
  \PP_{g_0}\!\left( \lim_{n \rightarrow \infty} [ R_0(\tilde{\tau}_n) \vee (-L_0(\tilde{\tau}_n)) ] = \infty \;|\; \tau=\infty \right) 
  \geq \PP_{g_0}( \lim_{n \rightarrow \infty} \tilde{\tau}_n = \infty \;|\; \tau=\infty ) 
  = 1
}
follows, where we set $R_0(-\infty) = -L_0(-\infty) \equiv -\infty$ and note that $\tilde{\tau}_{n+1}-\tilde{\tau}_n \geq 1$ as long as $-\infty<\tilde{\tau}_{n+1}$. 

We complete the proof with the help of the following lemma. Its proof follows below.
%
% ----------------------------------------------------------------------
%
\begin{lemma}
\label{LEM:help-1}
Let $\theta>\theta_c$. For arbitrary $\tilde{\epsilon}>0$ there exists $\psi_0=\psi_0(\tilde{\epsilon}) \in \SC_c^+$ with $R_0(\psi_0)=0$ such that
\eqn{
\lbeq{equ:limsup-inf-prob-1}
  \mbox{ for } \quad B(K) \equiv \{ \exists t>0: R_0(t)>K \} \quad \mbox{ we have } \quad \PP_{\psi_0}( B(K)^c \;|\; \tau=\infty ) 
  \leq 16 \tilde{\epsilon}, \qquad \forall\ K \geq 0.
}
\end{lemma}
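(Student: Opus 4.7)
The plan is to exhibit $\psi_0 \in \SC_c^+$ with $R_0(\psi_0) = 0$ for which $\{\sup_t R_0(u_t) = \infty\}$ has conditional probability close to $1$ given survival. We combine a symmetry-based lower bound of $1/2$ with an iteration via Lemma~\ref{LEM:for-all-outside-dom} that amplifies this to $1 - 16\tilde{\epsilon}$. Concretely, for $L, M > 0$ large (depending on $\tilde{\epsilon}$), take $\psi_0(x) := M\,\phi(x+L)$ where $\phi \in \SC_c^+$ is a fixed smooth even function supported on $[-L,L]$ with $\phi > 0$ on $(-L,L)$. Then $\psi_0$ is supported in $[-2L, 0]$, has $R_0(\psi_0) = 0$, and is invariant under the reflection $x \mapsto -2L - x$. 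Using \eqref{equ:upper-invariant-Laplace} together with dominated convergence (and $\mu(\{f: f \equiv 0\}) = 0$), the survival probability $p := \PP_{\psi_0}(\tau = \infty)$ can be arranged to satisfy $p \geq 1 - \tilde{\epsilon}$ by enlarging $L$ and $M$.

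Symmetry step: The SPDE \eqref{equ:SPDE} is invariant under the reflection $x \mapsto -2L - x$, since $\partial_{xx}$, the drift terms, and the law of space-time white noise are all preserved. Combined with $\psi_0(-\cdot - 2L) = \psi_0$, the law of $u$ under $\PP_{\psi_0}$ is fixed by this reflection, which sends the right wavefront $R_0(u_t)$ to $-L_0(u_t) - 2L$. Hence
\[
\PP_{\psi_0}\!\left( \sup_t R_0(t) = \infty \,\big|\, \tau=\infty \right) \;=\; \PP_{\psi_0}\!\left( \inf_t L_0(t) = -\infty \,\big|\, \tau=\infty \right).
\]
By Proposition~\ref{PRO:overall_mass_and_support}, on $\{\tau=\infty\}$ we have $R_0(t) - L_0(t) \to \infty$ a.s., so at least one of the two events above occurs almost surely conditional on survival; equality of their probabilities then forces each to have conditional probability at least $1/2$.

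Amplification step: Apply Lemma~\ref{LEM:for-all-outside-dom} with $g_0 = \psi_0$ and target $\psi_0$. Conditional on $\{\tau=\infty\}$, there are a.s.\ infinitely many finite times $\tilde{\tau}_n$ with $u_{\tilde{\tau}_n+1}(\cdot + R_0(\tilde{\tau}_n)) \geq \psi_0$. At each $\tilde{\tau}_n + 1$, iterated application of the Barlow--Evans--Perkins coupling of \eqref{equ:coupling-B}--\eqref{equ:coupling-IT} on an enlarged probability space (with independent ``extra'' noise for each $n$) yields fresh copies $v^{(n)}$ of \eqref{equ:SPDE} starting at $\psi_0(\cdot - R_0(\tilde{\tau}_n))$, pathwise dominated by $u_{\tilde{\tau}_n+1+\cdot}$ and conditionally independent across $n$ given the driving noise for $u$. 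By translation invariance of \eqref{equ:SPDE} and the symmetry step, each $v^{(n)}$ marginally satisfies $\PP(\sup_s R_0(v^{(n)}_s) = \infty) \geq (1 - \tilde{\epsilon})/2$, while domination gives $\{\sup_s R_0(v^{(n)}_s) = \infty\} \subseteq \{\sup_t R_0(u_t) = \infty\}$. A conditional Borel--Cantelli-type estimate across sufficiently many fresh copies then amplifies the base $1/2$ bound to $\PP_{\psi_0}(\sup_t R_0(u_t) < \infty \mid \tau=\infty) \leq 16\tilde{\epsilon}$, and since $B(K)^c \subseteq \{\sup_t R_0(u_t) \leq K\} \subseteq \{\sup_t R_0(u_t) < \infty\}$ for every $K \geq 0$, the claim follows.

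The main obstacle is the amplification step. Under full pathwise coupling $v^{(n)} \leq u$, matching of marginal escape probabilities forces $\{\sup_s R_0(v^{(n)}_s) = \infty\}$ and $\{\sup_t R_0(u_t) = \infty\}$ to essentially coincide, while naive independent copies (uncoupled from $u$) lose the domination needed to transfer their escape behaviour back to $u$. The delicate point is to construct the coupling so that $v^{(n)} \leq u_{\tilde{\tau}_n+1+\cdot}$ holds pathwise \emph{and} the extras driving different $v^{(n)}$ are independent enough for the iterated tower argument to beat the $1/2$ bound down to $16\tilde{\epsilon}$. This is realised via the BEP construction analogous to \eqref{equ:coupling-B}--\eqref{equ:coupling-IT}, iterated at each $\tilde{\tau}_n + 1$ with fresh independent extras, relying throughout on translation invariance of \eqref{equ:SPDE} and the strong Markov property of Theorem~\ref{THM:tribe}.
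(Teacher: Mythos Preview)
Your symmetry step is correct and matches the paper. The gap is in the amplification step, and it is exactly the obstacle you name but do not actually overcome.

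The events $E_n = \{\sup_s R_0(v^{(n)}_s) = \infty\}$ are tail events: each depends on the noise over an unbounded time horizon after $\tilde{\tau}_n+1$. The BEP coupling cannot make them conditionally independent while preserving both the pathwise domination $v^{(n)} \leq u_{\tilde{\tau}_n+1+\cdot}$ and the marginal law $\PP_{\psi_0}$. If $v^{(n)}$ is built first (with fresh noise $W^{(n)}$) and $u$ after $\tilde{\tau}_n+1$ is reconstructed as $v^{(n)}+\text{increment}$, then $u$'s future is determined by $W^{(n)}$; doing this again at $\tilde{\tau}_{n+1}+1$ redefines $u$ and destroys the earlier domination $v^{(n)} \leq u$. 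If instead you fix $u$ and build each $v^{(n)}$ conditionally on $u$ with independent extras, the $v^{(n)}$ are conditionally independent given $u$, but their conditional law is no longer $\PP_{\psi_0}$: on $\{\sup_t R_0(u_t) < \infty\}$ domination forces $\PP(E_n \mid u) = 0$ for every $n$, so Borel--Cantelli yields nothing on precisely the event you care about. Tail events all contained in the same target event cannot be decorrelated while keeping the inclusion.

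The paper's fix is to replace the tail event by a finite-horizon surrogate \emph{before} iterating. With $A = \{\limsup_n R_0(\tilde{\tau}_n) < \infty\} \supseteq B(K)^c$, one approximates $A$ by $A' = \{\max_{1 \leq n \leq N_0} R_0(\tilde{\tau}_n) < M_0\}$ so that $\PP_{\psi_0}(A \triangle A' \mid \tau=\infty) < \tilde{\epsilon}$; crucially $A' \in \SF_{\tilde{\tau}_{N_0}+1}$. Conditioning on $\SF_{\tilde{\tau}_{N_0}+1}$ and using $u_{\tilde{\tau}_{N_0}+1}(\cdot+R_0(\tilde{\tau}_{N_0})) \geq \psi_0$ together with a bound $|R_0(\tilde{\tau}_{N_0})| < M_1$ gives the recursion
\[
\PP_{\psi_0}(B(K)^c \mid \tau=\infty) \;\leq\; 2\tilde{\epsilon} + \frac{\PP_{\psi_0}(\tau<\infty)}{\PP_{\psi_0}(\tau=\infty)} + \PP_{\psi_0}(A')\,\PP_{\psi_0}(B(K+M_1)^c \mid \tau=\infty).
\]
Your symmetry step, combined with choosing $\psi_0$ so that $\PP_{\psi_0}(\tau<\infty) \leq \tilde{\epsilon}$, yields $\PP_{\psi_0}(A') < 3/4$, and iterating in $K$ sums a geometric series to $16\tilde{\epsilon}$. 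The point is that each step uses a \emph{single} renewal at a stopping time, so the Markov property applies cleanly; attempting infinitely many overlapping renewals at once is what your Borel--Cantelli scheme requires, and the filtration does not support it.
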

%
% ----------------------------------------------------------------------
%
We only prove recurrence, that is property \eqref{equ:def-recurrent}, in case $B=(-1,1)$. The proof for general open $B \subset \RR$ is analoguous. Let $g_0 \in \SC_c^+ \backslash \{0\}$ and $T>0$ be arbitrarily fixed. Recall that solutions to \eqref{equ:SPDE} take values in $\SC([0,\infty),\SC_{tem}^+)$. We show in what follows that
\eqn{
  \PP_{g_0}\!\left( \exists t \geq T: \mbox{supp}(u(t,\cdot)) \cap (-1,1) \neq \emptyset \ \big| \ \tau=\infty \right) \geq 1-16 \tilde{\epsilon} \ \mbox{ for all } \ \tilde{\epsilon}>0.
}
Let $\tilde{\epsilon}>0$ be arbitrarily fixed and $\psi_0=\psi_0(\tilde{\epsilon})$ as in Lemma~\ref{LEM:help-1}. By \eqref{equ:def-epsilon-prime}, conditional on the event $\{\tau=\infty\}$, with probability one, there exists (a random) $n \in \NN$ such that $\tilde{\tau}_n=\tilde{\tau}_n^{(\psi_0)} \geq T$. Assume without loss of generality that $L_0(\tilde{\tau}_n)=L_0(u_{\tilde{\tau}_n}^{(g_0)}) < L_0(\psi_0)$ (recall that $R_0(\psi_0)=0$). The other case follows by similar reasoning. By definition of $\tilde{\tau}_n$, $h(x) \equiv u_{\tilde{\tau}_n+1}^{(g_0)}(x+ L_0(\tilde{\tau}_n)) \geq \psi_0(x)$. We can therefore construct a coupling of solutions to \eqref{equ:SPDE} satisfying 
\eqn{
\lbeq{equ:coupling-1}
  u_{\tilde{\tau}_n+1+t}^{(g_0)}(x+ L_0(\tilde{\tau}_n)) = u_t^{(h)}(x) \geq u_t^{(\psi_0)}(x) 
}
for all $t \geq 0, x \in \RR$ almost surely. From Lemma~\ref{LEM:help-1} it follows that conditional on the event $\{ \tau^{(\psi_0)} = \infty \}$, there exists (a random) $t_0 > 0$ such that $R_0(u_{t_0}^{(\psi_0)}) > -L_0(\tilde{\tau}_n)$ with probability at least $1-16 \tilde{\epsilon}$. Thus, by the coupling in \eqref{equ:coupling-1}, there exists (a random) $s>0$ with $\tilde{\tau}_n + 1 \leq s \leq \tilde{\tau}_n + 1 + t_0$ such that $\mbox{supp}\big( u_s^{(g_0)} \big) \cap (-1,1) \neq \emptyset$.

In case $u_\cdot^{(\psi_0)}$ dies out after a time-period of length $\tau^{(\psi_0)}$, we restart the argument at a time $\tilde{\tau}_m^{(g_0)}>\tilde{\tau}_n^{(g_0)}+\tau^{(\psi_0)}, m>n$. Another geometric series type of argument concludes the claim.
\qed\\
%
% ----------------------------------------------------------------------
%

\noindent\textit{Proof of Lemma~\ref{LEM:help-1}.}
For $\tilde{\epsilon}>0$ arbitrarily fixed, we will prove the existence of $\psi_0 \in \SC_c^+$, symmetric around zero, such that \eqref{equ:limsup-inf-prob-1} holds for all $K \geq R_0(\psi_0)$. The claim then follows by the translation invariance of solutions to \eqref{equ:SPDE}. 

Consider arbitrary $g_0=\psi_0$, for now independent of $\tilde{\epsilon}$ (only at the end we shall choose $\psi_0=\psi_0(\tilde{\epsilon})$) but symmetric around zero. Set
\eqn{
  A \equiv A(\psi_0) \equiv \left\{ \limsup_{n \rightarrow \infty} R_0\big( \tilde{\tau}_n^{(\psi_0)} \big) < \infty \right\}.
}
As $g_0=\psi_0$ is an initial condition that is symmetric around zero, we obtain by \eqref{equ:def-epsilon-prime} and symmetry, 
\eqn{
\lbeq{equ:limsup-inf-prob-12}
  \PP_{\psi_0}(A^c \;|\; \tau=\infty ) = \PP_{\psi_0}\!\left( \limsup_{n \rightarrow \infty} R_0(\tilde{\tau}_n) = \infty \;\Big|\; \tau=\infty \right) = \PP_{\psi_0}\!\left( \limsup_{n \rightarrow \infty} (-L_0(\tilde{\tau}_n)) = \infty \;\Big|\; \tau=\infty \right) \geq \frac{1}{2}. 
}

To establish \eqref{equ:limsup-inf-prob-1}, first observe that there exist $M_0=M_0(\tilde{\epsilon},\psi_0) > 0$ and $N_0 = N_0(\tilde{\epsilon},\psi_0,M_0) \in \NN$ big enough such that with
\eqn{
  A' = A'(\psi_0) \equiv \left\{ \sup_{1 \leq n \leq N_0} R_0(\tilde{\tau}_n) < M_0 \right\} 
}
we have (for two sets $A,B$ denote symmetric difference by $A \triangle B = (A \cap B^c) \cup (A^c \cap B)$)
\eqn{
\lbeq{equ:inf-to-fin}
  \PP_{\psi_0}\!\left( A \triangle A' \;|\; \tau=\infty \right) < \tilde{\epsilon}. 
}
Choose $M_1=M_1(\tilde{\epsilon},\psi_0,M_0,N_0,L_0(\psi_0))$ big enough such that
\eqn{
\lbeq{equ:restriction-on-av-rhs}
  \PP_{\psi_0}\!\left( \sup_{1 \leq n \leq N_0} |R_0(\tilde{\tau}_n)| < M_1 \;\Big|\; \tau=\infty \right) > 1-\tilde{\epsilon}.
}
Indeed, apply \eqref{equ:def-epsilon-prime} with $g_0=\psi_0$ and Lemma~\ref{LEM:exp_wavefront_bd}. Note that for all $K \in \RR$, $B(K)^c \subseteq A(\psi_0)$ holds. Together with \eqref{equ:inf-to-fin} this yields
\eqn{
  \PP_{\psi_0} (B(K)^c \;|\; \tau=\infty) \leq \PP_{\psi_0} (B(K)^c \cap A' \;|\; \tau=\infty) + \tilde{\epsilon}. 
}
By \eqref{equ:restriction-on-av-rhs},
\eqn{
  \PP_{\psi_0} (B(K)^c \cap A' \;|\; \tau=\infty) 
  \leq \tilde{\epsilon} + \PP_{\psi_0}( B(K)^c \cap A' \cap \{ |R_0(\tilde{\tau}_{N_0})| < M_1 \} \;|\; \tau=\infty ).
}
Condition on $\SF_{\tilde{\tau}_{N_0}+1}$ (recall that $\tilde{\tau}_n+1$ is a stopping time) to obtain with the help of \eqref{equ:def-epsilon-prime},
\eqan{
  \PP_{\psi_0} (B(K)^c \cap A' \cap \{ |R_0(\tilde{\tau}_{N_0})| < M_1 \} \;|\; \tau=\infty) 
  &= \frac{ \EE_{\psi_0}\!\left[ \EE_{\psi_0}\!\left[ \1_{B(K)^c \cap A' \cap \{ |R_0(\tilde{\tau}_{N_0})| < M_1 \}} \1_{\{\tau=\infty\}} \;\big|\; \SF_{\tilde{\tau}_{N_0}+1} \right] \right] }{ \PP_{\psi_0}( \tau=\infty ) } \\
  &\leq \frac{ \EE_{\psi_0}\!\left[ \1_{A' \cap \{ |R_0(\tilde{\tau}_{N_0})| < M_1 \} } \1_{\{ \tilde{\tau}_{N_0}>-\infty \}} \cdot \PP_{u_{\tilde{\tau}_{N_0}+1}} ( B(K)^c ) \right] }{ \PP_{\psi_0}( \tau=\infty) }. \nn
}
By definition of $\tilde{\tau}_n$, $u_{\tilde{\tau}_{N_0}+1}(\cdot + R_0(\tilde{\tau}_{N_0})) \geq \psi_0$ on $\{ \tilde{\tau}_{N_0} > -\infty \}$. Monotonicity therefore yields
\eqn{
  \PP_{\psi_0} (B(K)^c \cap A' \cap \{ |R_0(\tilde{\tau}_{N_0})| < M_1 \} \;|\; \tau=\infty) 
  \leq \frac{ \EE_{\psi_0}\!\left[ \1_{A' \cap \{ |R_0(\tilde{\tau}_{N_0})| < M_1 \}} \right] \PP_{\psi_0} ( B(K+M_1)^c ) }{ \PP_{\psi_0}( \tau=\infty ) }
}
and we conclude
\eqn{
  \PP_{\psi_0} (B(K)^c \;|\; \tau=\infty) 
\leq 2 \tilde{\epsilon} + \PP_{\psi_0} ( A' ) \PP_{\psi_0} ( B(K+M_1)^c  \;|\; \tau=\infty ) + \PP_{\psi_0}( \tau<\infty) / \PP_{\psi_0}( \tau=\infty ).
}
Recall that  $B(K+nM_1)^c \subseteq A(\psi_0)$ for $n \in \NN$ arbitrary. Iterate the above argument to finally obtain
\eqan{
  \PP_{\psi_0} (B(K)^c \;|\; \tau=\infty) 
\leq& 2 \tilde{\epsilon} + \PP_{\psi_0}( \tau<\infty) / \PP_{\psi_0}( \tau=\infty ) \\
& + \PP_{\psi_0} ( A' ) \big\{ 2 \tilde{\epsilon} + \PP_{\psi_0}( \tau<\infty) / \PP_{\psi_0}( \tau=\infty ) \nn\\
& + \PP_{\psi_0} ( A' ) \big[ 2 \tilde{\epsilon} + \PP_{\psi_0}( \tau<\infty) / \PP_{\psi_0}( \tau=\infty )
 + \cdots \nn\\
 & + \PP_{\psi_0} ( A' ) \PP_{\psi_0}\big( B(K+n M_1)^c \;|\; \tau=\infty \big) \cdots \big] \big\} \lbeq{equ:last-geom-sum-arg} \nn\\
  \leq& \left( 2 \tilde{\epsilon} + \PP_{\psi_0}( \tau<\infty) / \PP_{\psi_0}( \tau=\infty ) \right) \sum_{k=0}^{n-1} (\PP_{\psi_0}( A' ))^k + (\PP_{\psi_0}( A' ))^n. \nn
}
Using \eqref{equ:limsup-inf-prob-12} and \eqref{equ:inf-to-fin}, conclude that 
\eqan{
  \PP_{\psi_0}(A')
  &\leq \PP_{\psi_0}(A' \;|\; \tau=\infty) \PP_{\psi_0}(\tau=\infty) + \PP_{\psi_0}(\tau<\infty) \\
  &\leq \left( \PP_{\psi_0}(A \;|\; \tau=\infty) + \PP_{\psi_0}\!\left( A \triangle A' \;|\; \tau=\infty \right) \right) \PP_{\psi_0}(\tau=\infty) + \PP_{\psi_0}(\tau<\infty) \nn\\
  &< \left( \tfrac{1}{2}+\tilde{\epsilon} \right) \PP_{\psi_0}(\tau=\infty) + \PP_{\psi_0}(\tau<\infty). \nn
}
Finally choose $\psi_0=\psi_0(\tilde{\epsilon})$ such that $\PP_{\psi_0}(\tau<\infty) \leq \tilde{\epsilon}$. For $\tilde{\epsilon}$ small, this yields $\PP_{\psi_0}(A') < 3/4$ and
\eqn{
  \PP_{\psi_0} (B(K)^c \;|\; \tau=\infty) \leq 16 \tilde{\epsilon}
}
follows for $\tilde{\epsilon}$ small enough. This completes the proof.
\qed
%
% ======================================================================
%
%\pagebreak
%
% ======================================================================
%
\section{Appendix}
%
% ======================================================================
%
To clarify the comment at the end of \cite[Lemma~2.1]{T1996}, observe the following:
\begin{remark} \label{RMK:tribe_lemma_2_1}
Let $u$ be a solution to \eqref{equ:SPDE} started at $u_0 \in \SC_{tem}^+$. Suppose for some $R>0$ that $u_0$ is supported outside $(R-2,\infty)$. Then for $t \geq 1$,
\eqn{
  \PP\!\left( \int_0^t \int_R^\infty u_s(x) dx ds > 0 \right) \leq 48(1+\theta) e^{\theta t} \int e^{-\frac{(x-(R-1))^2}{4t}} u_0(x) dx. 
}
\end{remark}
%
% ----------------------------------------------------------------------
%
\begin{proof}
The first part of \cite[Lemma~2.1]{T1996} yields that if $v_0 \in \SC_{tem}^+$ is supported outside $(-(R+2),\infty)$ then a solution $v$ to \eqref{equ:SPDE} started at $v_0$ satisfies
\eqan{
  \PP\!\left( \int_0^t \int_{-R}^\infty v_s(x) dx ds > 0 \right)
  & = \PP\!\left( \int_0^t \int_{-R}^R v_s(x) dx ds > 0 \right) \\
  & \leq 48(1+\theta) e^{\theta t} \int e^{-\frac{(|x|-(R+1))^2}{4t}} v_0(x) dx \nn\\
  & \leq 48(1+\theta) e^{\theta t} \int_{-\infty}^{-(R+2)} e^{-\frac{(-x-(R+1))^2}{4t}} v_0(x) dx. \nn
}
Set $v_0(\cdot) \equiv u_0(\cdot+2R)$ and use the shift-invariance of solutions to \eqref{equ:SPDE} to get
\eqn{
  \PP\!\left( \int_0^t \int_R^\infty u_s(x) dx ds > 0 \right) 
  \leq 48(1+\theta) e^{\theta t} \int e^{-\frac{(x-(R-1))^2}{4t}} u_0(x) dx
}
as claimed. 
\end{proof}
%
% ----------------------------------------------------------------------
%

The next lemma provides us with a similar statement for $t>0$ arbitrarily small.
%
% ----------------------------------------------------------------------
%
\begin{lemma} \label{LEM:tribe_lemma_2_1}
Let $R>r>q>0$ arbitrarily fixed. If $u_0 \in \SC_{tem}^+$ is supported outside $(-R,R)$ and $u$ is a solution to \eqref{equ:SPDE} starting in $u_0$, then for $t>0$
\eqn{
  \PP\!\left( \int_0^t \int_{-r}^r u_s(x) dx ds > 0 \right) \leq C \left( \theta + (R-r)^{-2} \right) e^{\theta t} \int \left( 1 \wedge \frac{\sqrt{t}}{|x|-R} \right) \exp\!\left( - \frac{(|x|-R)^2}{4t} \right) u_0(x) dx,
}
where $C$ is a constant. If $u_0$ is supported outside $(q,\infty)$, then
\eqn{
  \PP\!\left( \int_0^t \int_r^\infty u_s(x) dx ds > 0 \right) \leq C \left( \theta + (r-q)^{-2} \right) e^{\theta t} \int \left( 1 \wedge \frac{\sqrt{t}}{q-x} \right) \exp\!\left( - \frac{(q-x)^2}{4t} \right) u_0(x) dx. 
}
\end{lemma}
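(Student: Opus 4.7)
The plan is to extend Tribe's Lemma~2.1 (Remark~\ref{RMK:tribe_lemma_2_1}) from $t \geq 1$ to arbitrary $t > 0$. The length scale of the test function is now dictated by $R - r$ (respectively $r - q$), yielding the factor $(R-r)^{-2}$ (resp.\ $(r-q)^{-2}$), while the factor $(1 \wedge \sqrt{t}/(|x|-R)) \exp(-(|x|-R)^2/(4t))$ appears via a reflection-principle hitting-probability estimate. I describe the first statement; the second follows by an identical one-sided argument, with the barrier at $\{x = q\}$ in place of $\{|x| = R\}$ and the distance $r - q$ in place of $R - r$.

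First I would reduce to a Dawson--Watanabe superprocess: by Remark~\ref{RMK:monotonicity-domination_super}(ii), there is a coupling with $\bar u$ (mass-creation rate $\theta$) satisfying $u(s,x) \leq \bar u(s,x)$, so the probability to bound is dominated by the analogous probability for $\bar u$. For the superprocess, the log-Laplace identity for weighted occupation times gives
\[
  \PP\!\left( \int_0^t \int_{-r}^r \bar u_s(x) dx ds > 0 \right)
  = 1 - \exp(-\langle u_0, V^\infty(t,\cdot)\rangle)
  \leq \langle u_0, V^\infty(t,\cdot)\rangle,
\]
where $V^\infty(s,x) = \lim_{\lambda \to \infty} V^\lambda(s,x)$ and $V^\lambda$ is the unique non-negative solution of
\[
  \partial_s V = V_{xx} + \theta V - V^2 + \lambda \mathbbm{1}_{(-r,r)}, \qquad V(0,\cdot)=0.
\]
Since $u_0$ is supported in $\{|x|\geq R\}$, it remains to bound $V^\infty(t,x)$ for $|x|\geq R$.

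The core analytic step is an a~priori bound on $V^\infty$ at $|x|=R$. The nonlinearity $-V^2$ forces $V^\infty$ to remain bounded on the transition region $\{r<|x|<R\}$ despite the singular boundary values at $\{|x|=r\}$: an Iscoe-type barrier comparison with the stationary profile $W$ solving $W'' + \theta W - W^2 = 0$ on an interval of length $R-r$ with $W = +\infty$ at the singular endpoint yields the uniform bound
\[
  V^\infty(s,y) \leq C\!\left((R-r)^{-2}+\theta\right), \qquad s \geq 0, \ |y|=R.
\]

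With this boundary control, the parabolic maximum principle gives $V^\infty(t,x) \leq \tilde V(t,x)$ on $\{|x|\geq R\}$, where $\tilde V$ solves the linear heat equation $\partial_s \tilde V = \tilde V_{xx} + \theta \tilde V$ with boundary value $C((R-r)^{-2}+\theta)$ on $\{|y|=R\}$ and zero initial data. Feynman--Kac then gives $\tilde V(t,x) = C((R-r)^{-2}+\theta)\, e^{\theta t}\, \PP_x(\tau_R \leq t)$, where $\tau_R = \inf\{s \geq 0 : |B_s| \leq R\}$ for a standard Brownian motion started at $x$. The reflection principle bounds this hitting probability by $C (1 \wedge \sqrt{t}/(|x|-R)) \exp(-(|x|-R)^2/(4t))$. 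Integrating against $u_0(x)\,dx$ yields the claim. The main obstacle is the Iscoe-type a~priori bound in the transition region, which is a delicate comparison relying crucially on the quadratic absorption; once this is in hand, the Feynman--Kac step and the reflection bound are standard.
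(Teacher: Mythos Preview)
Your proposal is correct and close in spirit to the paper's proof, differing in two minor respects. First, rather than dominating by a superprocess to obtain an exact log-Laplace identity, the paper works directly with the Laplace \emph{inequality} for the SPDE from \cite[(9)]{T1996}, namely $\EE[\exp(-\lambda \int_0^t \langle u_s,\phi_r\rangle\,ds)] \geq \exp(-\langle u_0,\xi^\lambda(t,\cdot)\rangle)$, where $\xi^\lambda$ solves $\xi_t = \xi_{xx} + \theta\xi - \tfrac12\xi^2 + \lambda\phi_r$ with zero initial data; the $-u^2$ drift of \eqref{equ:SPDE} only helps this inequality, so no coupling step is needed. Second, in place of your maximum-principle comparison with the linear heat equation on $\{|x|>R\}$, the paper uses the Feynman--Kac representation for $\xi^\lambda$ itself (following \cite[Lemma~3.5]{DIP1989}) and a first-passage decomposition at $T_R = \inf\{t:|B_t|\leq R\}$ to obtain $\xi^\lambda(t,x) \leq h(R)\,e^{\theta t}\,P_0^x(T_R<t)$. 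Both routes land on the same Brownian hitting-probability estimate via the reflection principle, and the Iscoe-type a~priori bound you single out as the main obstacle is precisely the inequality $\xi^\lambda(t,x)\leq 2\theta+12(|x|-r)^{-2}$ for $|x|>r$ quoted from \cite[(10)]{T1996}; evaluating it at $|x|=R$ gives your boundary constant $C(\theta+(R-r)^{-2})$.
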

%
% ----------------------------------------------------------------------
%
\begin{proof}
We follow in main parts the reasoning of the proof of \cite[Lemma~2.1]{T1996} and only indicate the changes where necessary. Let $0 \leq \phi_r \in \SC_c^\infty$ such that $\{x: \phi_r(x)>0 \} = (-r,r)$. For $\lambda \geq 0$ let $\xi^\lambda(t,x)$ be the unique non-negative solution of
\eqn{
\lbeq{equ:laplace-xi}
  \xi_t = \xi_{xx} + \theta \xi - (1/2) \xi^2 + \lambda \phi_r, \quad \xi(0,\cdot) \equiv 0.
}
In \cite[(10)]{T1996} it is shown that 
\eqn{
\lbeq{equ:xi-bound}
  \xi^\lambda(t,x) \leq h(x) \equiv 2\theta + 12 (|x|-r)^{-2} \quad \mbox{ for all } |x|>r, \lambda \geq 0.
}
Set $\psi^\lambda(s,x) \equiv \xi^\lambda(t-s,x)$ for $s \in [0,t]$ and $t>0$ fixed. Reason as in the proof of Dawson, Iscoe and Perkins~\cite[Lemma~3.5]{DIP1989}, (3.2.21)--(3.2.24), leaving out (3.2.23) (note that we use $\lambda$ and $\xi, \psi$ where \cite{DIP1989} use $\theta$ and $u, \bar{u}$). To account for the additional term of $\theta \xi$ in \eqref{equ:laplace-xi}, we add this term in the definition of $\xi$ but note that \cite[(3.2.21)]{DIP1989} remains unchanged. Analoguous reasoning then leads to
\eqn{
  \bar{A}\!\left( e^{\theta s}\psi^\lambda(s,x) \right) = -\lambda e^{\theta s} \phi_r(x)
}
and
\eqn{
  \xi^\lambda(t,x) = \lambda \EE_0^x\!\left[ \int_0^t e^{\theta s} \phi_r(B_s) e^{-\int_0^s \xi^\lambda(t-v,B_v) dv} ds \right],
}
where $E_0^x$ denotes expectation with respect to $P_0^x$, the law of Brownian motion starting in $x$. In case $|x|>R>r$, 
\eqn{
  \xi^\lambda(t,x) \leq E_0^x\!\left[ \1_{\{ T_R < t \}} \xi^\lambda(t-T_R,B(T_R)) e^{\theta T_R} \right]
}
follows with $T_R \equiv \inf\{t: |B(t)| \leq R \}$. Next reason as in \cite[(3.2.11)--(3.2.12)]{DIP1989}, using \eqref{equ:xi-bound} and the definition of $T_R$ to obtain
\eqan{
  \xi^\lambda(t,x) 
  &\leq P_0^x(T_R<t) h(R) e^{\theta t}
  \leq 4 e^{\theta t} h(R) P_0^0\!\left( B_1 > \frac{|x|-R}{\sqrt{t}} \right) \\
  &\leq C e^{\theta t} h(R) \left( 1 \wedge \frac{\sqrt{t}}{|x|-R} \right) \exp\!\left( - \frac{(|x|-R)^2}{4t} \right) \nn
}
for some constant $C$.

Recall from \cite[(9)]{T1996} that
\eqn{
  \EE\!\left[ \exp\!\left( -\lambda \int_0^t \langle u_s , \phi_r \rangle ds \right) \right] \geq \exp\!\left(-\big\langle u_0 , \xi^\lambda(t,\cdot) \big\rangle \right).
}
Take $\lambda \rightarrow \infty$ and use $1-e^{-x} \leq x$ for $x \geq 0$ to conclude
\eqn{
  \PP\!\left( \int_0^t \int_{-r}^r u_s(x) dx ds > 0 \right) \leq C h(R) e^{\theta t} \int \left( 1 \wedge \frac{\sqrt{t}}{|x|-R} \right) \exp\!\left( - \frac{(|x|-R)^2}{4t} \right) u_0(x) dx 
}
concluding the proof of the first claim. The second claim follows as in Remark~\ref{RMK:tribe_lemma_2_1} above. 
\end{proof}
%
% ======================================================================
% ======================================================================

\paragraph{Acknowledgements.}
I would like to thank my Ph.D. supervisor Ed Perkins for sparking my interest in this research area during my Ph.D. at UBC and for his continued support and helpful feedback. I could not (yet) answer the question he posed to me but the journey continues. By now the journey includes two postdocs, one at EURANDOM, one at the UDE and this is where it got me so far. For the respective phases of the project, I acknowledge the support by an NSERC Discovery Grant, the Netherlands Organisation for Scientific Research (NWO) and by the DFG through the SPP Priority Programme 1590. 
%Further thanks to to Roger Tribe for giving helpful feedback after reading my manuscript.
%

\bibliography{bib}
\bibliographystyle{plain}

% ======================================================================

\end{document}